\newcommand{\letitre}{Asymptotic Hodge Theory of Vector Bundles}
\theoremstyle{plain}
\newtheorem{theorem}[equation]{Theorem}
\newtheorem{corollary}[equation]{Corollary}
\newtheorem{lemma}[equation]{Lemma}
\newtheorem{proposition}[equation]{Proposition}
\newtheorem{conjecture}[equation]{Conjecture}
\theoremstyle{definition}
\newtheorem{definition}[equation]{Definition}
\newtheorem{question}[equation]{Question}
\newcommand{\IC}{\mathbb{C}}
\newcommand{\IQ}{\mathbb{Q}}
\newcommand{\IR}{\mathbb{R}}
\newcommand{\End}{\mathrm{End}}
\newcommand{\tr}{\mathrm{tr}}
\newcommand{\Tr}{\mathrm{Tr}}
\newcommand{\ad}{\mathrm{ad}}
\newcommand{\Hom}{\mathrm{Hom}}
\newcommand{\ch}{\mathrm{ch}} 
\newcommand{\Vect}{\mathrm{Vect}}
\newcommand{\even}{\mathrm{even}}
\newcommand{\odd}{\mathrm{odd}}
\newcommand{\rk}{\mathrm{rk}} 
\renewcommand\dim{{\rm dim\,}}
\def\d/{/\mspace{-6.0mu}/}
\newcommand{\db}{\bar{\partial}}
\newcommand{\p}{\partial}
\newcommand{\Prefix}{\mathcal{U}}
\newcommand{\LI}{L^{-1}}
\newcommand{\floor}[1]{\lfloor#1\rfloor}
\newcommand{\Vol}{\mathrm{Vol}}
\newcommand{\pp}[1]{\frac{\partial}{\partial #1}}
\begin{document}

\title{\letitre} 
\author{Benoit Charbonneau and Mark Stern}

\date{November 2, 2011} 
\maketitle

\footnotetext{\parindent0mm BC: Department of Mathematics, St. Jerome's University in the University of Waterloo, 290 Westmount Rd N, Waterloo, Ontario, N2L 3G3, Canada; benoit@alum.mit.edu\\
MS: Department of Mathematics, Duke University, Durham, NC 27708, USA;  stern@math.duke.edu\\
BC is funded by NSERC Discovery grant.  MS is funded by NSF grant DMS 1005761.}

\section{Introduction}
 Let $M$ be a compact complex manifold of complex dimension $m$. Let $\Vect(M)$ denote the isomorphism classes of complex vector bundles over $M$, and let $\Vect(M,r)$ denote the subset of isomorphism classes of bundles of rank $r$. Given $E\in \Vect(M)$ equipped with a connection $A$ with curvature $F_A$, the \emph{Chern character} is defined to be 
\[\ch(E):= \left[\tr \exp(\frac{iF_A}{2\pi})\right]\in H^*(M,\IQ).\]
The Chern character extends to a surjective map \[\ch\colon \Vect(M)\otimes\IQ\twoheadrightarrow H^{\even}(M,\IQ).\]
When $M$ is K\"ahler, the Hodge decomposition, \[H^{d}(M,\IC)=\oplus_{p+q=d}H^{p,q}(M)\] and the Hodge filtration, 
\[S^p_HH^d:= \oplus_{j\geq p}H^{j,d-j}(M),\] are powerful tools in complex geometry. In this note, we introduce and analyze for projective varieties, $M$, a natural filtration $S_V^\cdot$ on $\Vect(M)$, which is analogous to the Hodge filtration on $H^{d}(M,\IC)$. Heuristically, the filtration measures degree of failure to admit a holomorphic structure. In fact, $S_V^0$ comprises bundles admitting holomorphic structures. 

Let $L$ be an ample holomorphic line bundle over a smooth projective variety $M$ of complex dimension $m$. Then $L$ admits a metric, $h$, whose induced Chern connection has curvature $F^L$ which satisfies $F^L(v,\bar v)>0$, for all nonzero holomorphic tangent vectors $v$. We call such a metric {\em admissible}. An admissible metric on an ample line bundle determines a K\"ahler form for $M$, by defining the K\"ahler form $\omega$ to be $\omega = iF^L$.  We call a choice of K\"ahler structure on $M$ induced by an ample line bundle with an admissible metric a \emph{polarization} of $M$.  We call $(L,h)$, (or $L$ when $h$ is understood) a \emph{polarizing line bundle}. 

Let $E$ be a complex vector bundle of rank $r$, with connection $A$. We do not assume that $E$ is holomorphic. The connection $A$ on $E$ and the Chern connection on a polarizing $L$ induce connections $A(k)$ on $E\otimes L^k$. Let $\db_{A(k)}$ denote the associated $\db$-operator. Define 
\[D_{A(k)} = \sqrt{2}(\db_{A(k)}+\db_{A(k)}^*).\]
For $k$ sufficiently large, the dimension of the kernel of $D_{A(k)}$ is the index of $D_{A(k)}^{\even}$, the restriction of $D_{A(k)}$ to $E\otimes L^k$ valued even forms. Let $s\in \ker (D_{A(k)})$. Let $s^j$ denotes the $(0,j)$ component of $s$, and write 
\[s = s^0 + s^2 +\cdots + s^{2\floor{\frac{m}{2}}},\]
where $\floor{x}$ denotes the integer part of $x$. 
When $E$ is holomorphic, the same estimates which imply that $\ker(D_{A(k)}^{\odd}) = 0$ for $k$ large, imply that $s=s^0$. Generically, for $E$ nonholomorphic, one does not expect to find any $s\in \ker (D_{A(k)})$ satisfying $s=s^0$. This leads us to the first definition of our filtration. 

\begin{definition} Let $(L,h)$ be a polarization of $M$. We say $E\in S^q_{V,L,h}\Vect(M)$ if $E$ admits a connection $ A $ so that for all $k$ sufficiently large, 
$s\in \ker (D_{A(k)})$ implies 
$s^{2j} = 0$, $\forall j>q$. We call $A$ an \emph{$S^q_{V,L,h}$-compatible} connection, or simply an $S^q_{V}$-compatible connection if we do not wish to specify the polarization data. We say $E\in S^q_{V}\Vect(M)$ if $E\in S^q_{V,L,h}\Vect(M)$ for {\em some} choice of polarization $(L,h)$. We say $E$ is of \emph{Hodge type} $q$ if $E\in S^q_V\Vect(M)\setminus S^{q-1}_V\Vect(M)$.  We say $E\in IS^q_{V}\Vect(M)$ if $E$ admits a connection $A$ which is $S^q_{V,L,h}$ compatible for {\em every} choice of polarization $(L,h)$. 
\end{definition}

For all $q$ it is easy to construct examples of bundles of Hodge type $q\leq \frac{m}{2}$ on complex $m$ manifolds. A bundle is of Hodge type $0$ if and only if it admits a holomorphic structure. For $0< q<\frac{m}{2}$, simply consider two projective varieties $M_1$ and $M_2$ equipped with bundles $E_1$ and $E_2$. Assume $E_1$ is holomorphic. Then the Hodge type of $E_1\times E_2$ is the Hodge type of $E_2$, by a separation of variables computation. If $\dim_{\IC}M_2=2q$, and $H^{0,2q}(M_2)\not = 0$ then there exists $E_2$ of type $q$ on $M_2$, thus yielding bundles of type $q$ on $M_1\times M_2$. In general, the conditions defining $S_V^\cdot$ appear to be very difficult to establish, and may be too rigid for many applications. Consequently, we introduce  quantized versions, $S_{V,p}^{\cdot}$ and $IS_{V,p}^{\cdot}$, of the filtrations, which are easier to treat in many applications. The quantized filtrations satisfy $S^{\cdot}_V\subset S_{V,j+1}^{\cdot}\subset S_{V,j}^{\cdot},$ $j= 1,2,\ldots$, and similarly for $IS_{V,\ast}^{\cdot}$. In order to motivate these new filtrations, we need two preliminary results. 

Let $\Pi$ denote the unitary projection onto $\ker(D_{A(k)}).$
  Let $P_j$ denote the projection onto $E\otimes L^k$ valued $(0,j)$-forms. 
Then (see \cite[Theorem 4.1.1]{MM}  or Proposition \ref{ma411} below) 
\begin{equation}\Tr\Pi  = \frac{k^m}{2^m\pi^m}\Vol(M)\rk(E) + O(k^{m-1}),\end{equation}
and (see Proposition \ref{trths} and Proposition \ref{naya})
\begin{equation}\label{hitr}\Tr P_{2j}\Pi  =   \frac{k^{m-2j}}{2^{m-2j}\pi^m(j!)^2}\|(F_A^{0,2})^{\wedge j}\|_{L_2}^2 + O(k^{m-2j-1}).\end{equation}

\begin{definition}We say $E\in S_{V,p,L,h}^q\Vect(M)$ if $E$ admits a connection $A$ satisfying $\Tr P_{2q+2}\Pi  = O( k^{m-2q-2-p})$, for all $k$ sufficiently large. 
 We call $A$ an \emph{$S_{V,p,L,h}^q$-compatible} connection, or simply $S_{V,p}^q$-compatible if we do not specify the polarization. We say $E\in S^q_{V,p} $ if $E\in S^q_{V,p,L,h} $ for {\em some} choice of polarization $(L,h)$. We say $E\in IS^q_{V,p} $ if $E$ admits a connection $A$ which is $S^q_{V,p,L,h} $ compatible for {\em every} choice of polarization $(L,h)$. We call such a connection $IS^q_{V,p}$ compatible. We say $E\in MS^q_{V,p} $ if $E\in S^q_{V,p,L,h}\Vect(M) $ for \emph{every} choice of polarization $(L,h)$ (but with $A$ possibly depending on the polarization).  
\end{definition}

For $q=0$, the filtrations all agree.  This is an immediate consequence of (\ref{hitr}), which shows that an $S^0_{V,1}$ compatible connection satisfies $F_A^{0,2} = 0$, and thus defines a holomorphic structure on $E$.  For $q>0$, (\ref{hitr}) also implies $S_{V,1}^q = IS_{V,1}^q$; in fact an $S_{V,1}^q$ compatible connection is also $ IS_{V,1}^q$ compatible. In general an $S_{V,p}^q$ compatible connection need not be $ IS_{V,p}^q$ compatible for $p>1$.

\begin{question}Do these filtrations eventually stabilize? In other words, is there some $N(q)$ so that for $p_1,p_2\geq N(q)$, $S_{V,p_1}^q = S_{V,p_2}^q$? 
\end{question} 
For $q=0$, the filtrations all stabilize at $N(0) = 1$ on holomorphic bundles. The stabilization question may therefore be thought of as an extended integrabilty condition. 
\medskip

In order to support our claim that $S_V^{\cdot}$ and its quantum extensions are analogous to the Hodge filtration on cohomology, we consider the compatibility of these filtrations under the Chern isomorphism. For line bundles, it follows from (\ref{hitr}) that the Chern character is compatible with the filtrations. 
\begin{theorem}\label{thm0}
\[\ch_p(S_V^q\Vect(M,1))\subset (S_H^{p-q}\cap \bar S_H^{p-q})H^{2p}(M,\IQ).\]
\end{theorem}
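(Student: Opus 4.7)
The plan is to extract pointwise information about $F_A$ from the $S^q_V$-compatibility of the connection $A$, then use a multinomial expansion of $F_A^p$ together with the reality of $iF_A$ (for a unitary line-bundle connection) to identify the bidegrees of a closed representative of $\ch_p(E)$.

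First, I would extract the pointwise consequence of the $S^q_V$ condition. By hypothesis, $E\in S^q_V\Vect(M,1)$ admits a connection $A$ such that for all $k$ large and all $j>q$, $P_{2j}\Pi = 0$, whence $\Tr P_{2j}\Pi = 0$. Plugging this into the asymptotic (\ref{hitr}), the leading coefficient $\|(F_A^{0,2})^{\wedge j}\|_{L_2}^2$ must vanish for every $j\geq q+1$. Hence $(F_A^{0,2})^{q+1}\equiv 0$ pointwise. Next, since the adjoint in $D_{A(k)}$ is formed with respect to a Hermitian metric on $L^k$ (and implicitly on $E$), the connection $A$ is unitary, so $iF_A$ is a real $2$-form on the line bundle. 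This gives the reality relation $\overline{F_A^{0,2}} = -F_A^{2,0}$, and complex conjugating the previous identity yields $(F_A^{2,0})^{q+1} = 0$ as well.

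Second, I would expand $F_A^p$ via the multinomial theorem. Since the forms $F_A^{2,0},F_A^{1,1},F_A^{0,2}$ all have even degree they commute under $\wedge$, so
\begin{equation*}
F_A^{\wedge p} \;=\; \sum_{a+b+c=p} \binom{p}{a,b,c}\,(F_A^{2,0})^{\wedge a}\wedge (F_A^{1,1})^{\wedge b}\wedge (F_A^{0,2})^{\wedge c}.
\end{equation*}
By the first step, all terms with $a>q$ or $c>q$ vanish identically. The surviving $(a,b,c)$-summand has bidegree
\begin{equation*}
(2a+b,\,2c+b) \;=\; (p+a-c,\,p+c-a),
\end{equation*}
and the constraints $0\le a,c\le q$ force both coordinates $r = p+a-c$ and $s = p+c-a$ to lie in the interval $[p-q,\,p+q]$. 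Thus $F_A^{\wedge p}$ is a closed form (by Bianchi) whose type decomposition is supported entirely in bidegrees $(r,s)$ with $r\geq p-q$ and $s\geq p-q$.

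Third, I invoke the standard fact from Hodge theory on compact K\"ahler manifolds: a $d$-closed form whose type decomposition is supported in bidegrees $(r,s)$ with $r\geq p-q$ represents a cohomology class in $S_H^{p-q}H^{2p}$, and the analogous statement with the conjugate filtration holds if $s\geq p-q$. Combining both bounds, $[\tr F_A^{\wedge p}] \in S_H^{p-q}\cap \bar S_H^{p-q}$. Since $\ch_p(E)$ is a rational multiple of this class and is always defined over $\IQ$, this gives $\ch_p(E)\in (S_H^{p-q}\cap \bar S_H^{p-q})H^{2p}(M,\IQ)$.

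The main conceptual step is the first: deducing pointwise nilpotency of $F_A^{0,2}$ from the asymptotic vanishing of $\Tr P_{2j}\Pi$. Once (\ref{hitr}) is in hand this is immediate, and the remainder of the argument reduces to multinomial bookkeeping plus the compatibility of the type decomposition with the Hodge filtration. The only point requiring care is the unitarity hypothesis, which is implicit in the formation of $\bar\partial_{A(k)}^*$ and is essential for producing the conjugate vanishing $(F_A^{2,0})^{q+1}=0$ that yields membership in $\bar S_H^{p-q}$.
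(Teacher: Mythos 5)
Your proposal is correct and follows essentially the paper's intended argument: Equation (\ref{hitr}) forces $(F_A^{0,2})^{q+1}=0$ for an $S_V^q$-compatible connection, and in rank one the scalar curvature components commute, so the multinomial expansion of $F_A^{p}$ lands in bidegrees $(r,s)$ with $r,s\geq p-q$, which by Hodge theory gives the stated filtration membership (the conjugate inclusion also follows directly from rationality of $\ch_p$, without invoking unitarity).
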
 
We conjecture that this theorem extends to arbitrary rank.
\begin{conjecture}\label{conj1}\[\ch_p(S_V^q\Vect(M))\subset (S_H^{p-q}\cap \bar S_H^{p-q})H^{2p}(M,\IQ).\]\end{conjecture}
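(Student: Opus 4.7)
The plan is to follow the line-bundle proof of Theorem \ref{thm0} upgraded to the non-abelian setting. The first step is to establish a higher-rank analogue of the asymptotic \eqref{hitr}, namely
\[
\Tr P_{2j}\Pi = \frac{k^{m-2j}}{2^{m-2j}\pi^m(j!)^2}\,\bigl\|(F_A^{0,2})^{\wedge j}\bigr\|^2_{L_2}+O(k^{m-2j-1}),
\]
where now $(F_A^{0,2})^{\wedge j}$ is the $\End(E)$-valued $(0,2j)$-form obtained by wedging forms and composing endomorphisms, and the $L^2$-norm is taken with respect to the induced Hermitian structure on $\End(E)$-valued forms. The leading density is manifestly non-negative, so $S^q_V$-compatibility of $A$ (that is, $\Tr P_{2j}\Pi = 0$ for all $k\gg 0$ and $j>q$) forces $(F_A^{0,2})^{\wedge j}=0$ pointwise on $M$ for every $j>q$; in particular $(F_A^{0,2})^{\wedge(q+1)}=0$ as an $\End(E)$-valued $(0,2q+2)$-form.

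The second step, which is the main obstacle, is to show that the Chern--Weil representative $\tr F_A^p$ has no component of Hodge bidegree $(p-q',p+q')$ for any $q'>q$; by reality of $\ch_p$ and complex conjugation this gives simultaneously both containments in $S_H^{p-q}\cap \bar S_H^{p-q}$. Expanding $\tr F_A^p=\sum\tr(F_A^{i_1}\cdots F_A^{i_p})$ with $i_k\in\{(2,0),(1,1),(0,2)\}$, the target bidegree forces the count $c$ of $(0,2)$-factors to satisfy $c-a=q'>q$, hence $c\geq q+1$. In the line-bundle case the endomorphism factors commute and each such monomial immediately collapses to a scalar multiple of $(F_A^{0,2})^{\wedge c}=0$. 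For higher rank, cyclic invariance of $\tr$ only permits a rotation of the factors, not an arbitrary reordering, so $(0,2)$-factors separated by intervening $(1,1)$- and $(2,0)$-factors cannot be clustered by algebraic manipulation of the trace alone.

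To push through the second step I would work at the level of cohomology: $\ch_p(E)$ is a cohomology class and the $\partial\bar\partial$-lemma on the K\"ahler manifold $M$ permits adjustments of the representative $\tr F_A^p$ by exact forms. The Bianchi identity $dF_A=0$ decomposes into $\bar\partial F_A^{0,2}=0$, $\bar\partial F_A^{1,1}+\partial F_A^{0,2}=0$, and $\bar\partial F_A^{2,0}+\partial F_A^{1,1}=0$, which together with $(F_A^{0,2})^{\wedge(q+1)}=0$ should, in an inductive commutator reduction, allow exchange of adjacent $(1,1)$- and $(0,2)$-factors up to a $d$-exact correction, progressively clustering the $(0,2)$-factors into a single vanishing $(F_A^{0,2})^{\wedge(q+1)}$-block. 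A complementary route is a splitting principle: if the $S^q_V$-condition can be shown to lift to a flag bundle $\pi\colon\fF(E)\to M$ equipped with a naturally extended polarization, then $\pi^{\ast}E$ filters by line bundles, Theorem \ref{thm0} applies to each graded piece, and the conjecture descends via the projection formula. The algebraic-combinatorial core of either route --- verifying that the Bianchi-corrected commutators assemble into exact forms, or that $S^q_V$ is compatible with flag-bundle pullback --- is the true crux of the proof and the reason this statement appears as a conjecture rather than a theorem.
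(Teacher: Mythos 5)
You are proposing a proof of Conjecture \ref{conj1}, which the paper itself leaves open: there is no proof in the paper to compare against, only the partial results of Theorems \ref{thmA} and \ref{thmB}, and your own write-up concedes that the ``algebraic-combinatorial core'' of the second step is unverified, so what you have is a programme rather than a proof. Your first step is fine but is not new ground: the asymptotic (\ref{hitr}) is already stated and proved in the paper for arbitrary rank (Propositions \ref{trths} and \ref{naya}), and since an $S_V^q$-compatible connection has $P_{2j}\Pi=0$ exactly for $j>q$ (hence is $S^q_{V,p}$-compatible for every $p$), the nonnegativity of the leading density does give $(F_A^{0,2})^{\wedge(q+1)}=0$ pointwise; this is precisely Corollary \ref{corop1}. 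The entire difficulty of the conjecture sits in your second step, and there the proposal has a genuine gap.

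Concretely, the claim that the Bianchi identity together with $(F_A^{0,2})^{\wedge(q+1)}=0$ allows one to commute adjacent $F_A^{1,1}$ and $F_A^{0,2}$ factors ``up to $d$-exact corrections'' and thereby cluster the $(0,2)$-factors is exactly the step that is not known to work, and the paper's own partial results show why: already for $q=1$, $p=6$, the monomial $\tr\,(F_A^{1,1})^2\wedge F_A^{0,2}\wedge(F_A^{1,1})^2\wedge F_A^{0,2}$ is disposed of in Proposition \ref{incpfA} only by invoking $F_A^{0,2}\wedge F_A^{1,1}\wedge F_A^{0,2}=0$ and $F_A^{0,2}\wedge \p_A F_A^{0,2}=0$ from Proposition \ref{peq3}, identities which are \emph{not} consequences of $(F_A^{0,2})^{\wedge 2}=0$ plus Bianchi; they are extracted from subleading Bergman-kernel asymptotics combined with variation of the polarization, i.e.\ from $IS^1_{V,3}$-compatibility, which an $S_V^q$-compatible connection for a single polarization need not satisfy. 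This is exactly why Theorem \ref{thmA} assumes $IS_V^1$ and why for plain $S_V^q$ the paper only reaches $p<q+3$ (Corollary \ref{corop1.5}), the range in which cyclic invariance of the trace alone suffices. To make your first route work you would have to either produce the analogous pointwise identities from deeper fixed-polarization asymptotics or exhibit the exact correction terms directly, and neither is supplied. Your splitting-principle alternative has a more basic obstruction: when $E$ is not holomorphic the total space of the flag bundle $\fF(E)$ carries no natural complex structure, let alone an ample line bundle and admissible metric, so the notion of $S_V^q$ upstairs is not even defined and Theorem \ref{thm0} cannot be applied to the line-bundle graded pieces; the only functoriality the paper establishes is for $S^q_{V,1}$ under holomorphic maps of projective varieties, which does not cover this situation.
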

The conjecture is true for $IS_V^1$ for restricted $p$.  (See Proposition \ref{incpfA}.) 
\begin{theorem}\label{thmA} For $p<7$, 
\[\ch_p(IS_V^1\Vect(M))\subset (S_H^{p-1}\cap \bar S_H^{p-1})H^{2p}(M,\IQ).\]
\end{theorem}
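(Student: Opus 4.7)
The plan is to extract pointwise curvature identities from the $IS_V^1$ hypothesis and then use the Bianchi identity together with Hodge theory to produce a cohomologous representative of $\ch_p(E)$ of the required Hodge type. Since $A$ is $IS_V^1$-compatible, $P_{2j}\Pi = 0$ for every $j \geq 2$, every $k$ sufficiently large, and every polarization $(L,h)$. The leading-order coefficient in the expansion (\ref{hitr}) for $j=2$ then forces the pointwise identity
\[F_A^{0,2}\wedge F_A^{0,2}=0\quad \text{in}\quad \Omega^{0,4}(M;\End E).\]
Moreover, the subleading coefficients in (\ref{hitr}) for $j \geq 2$ are universal polynomial expressions in $F_A$, $F^L$, and their covariant derivatives, which must vanish for every admissible choice of $(L,h)$. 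Letting $F^L$ range over positive $(1,1)$-forms yields further differential identities for $F_A^{0,2}$, to be invoked as necessary.

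Next I expand $\tr F_A^p=\sum_{a+b=2p}\alpha^{(a,b)}$ by Hodge type. By complex conjugation symmetry it suffices to prove each $\alpha^{(a,b)}$ with $b \geq p+2$ is de Rham cohomologous to a sum of components of strictly smaller $b$. Each such $\alpha^{(a,b)}$ is a sum of traces of length-$p$ words in $\{F_A^{2,0},F_A^{1,1},F_A^{0,2}\}$ with at least $k\geq 2$ copies of $F_A^{0,2}$. Using cyclicity of trace on even-degree form-valued endomorphisms together with $(F_A^{0,2})^{\wedge 2}=0$, every word in which two $F_A^{0,2}$'s can be cyclically rotated into adjacent positions vanishes pointwise. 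A direct combinatorial enumeration shows that for $p\leq 3$ every surviving word is of this form, while for $p=4,5,6$ only a short list of interlaced words remains, the prototypes being
\[\tr\bigl(F_A^{1,1}F_A^{0,2}F_A^{1,1}F_A^{0,2}\bigr),\quad \tr\bigl((F_A^{1,1}F_A^{0,2})^{2}F_A^{1,1}\bigr),\quad \tr\bigl((F_A^{0,2}F_A^{1,1})^{3}\bigr),\]
together with variants in which some $F_A^{1,1}$'s are replaced by $F_A^{2,0}$'s.

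For each interlaced trace we invoke the Bianchi identity, whose bidegree decomposition reads $\partial_A F_A^{2,0}=0$, $\db_A F_A^{2,0}+\partial_A F_A^{1,1}=0$, $\db_A F_A^{1,1}+\partial_A F_A^{0,2}=0$, $\db_A F_A^{0,2}=0$. Combining these with $(F_A^{0,2})^{\wedge 2}=0$ and the supplementary identities from the first paragraph, each interlaced trace can be written as $\db$ of an explicit $\End(E)$-valued form, modulo traces of strictly smaller $b$. A standard transgression argument then produces a global form $\eta$ with $\tr F_A^p - d\eta$ having every Hodge component $(a,b)$ satisfying $a\geq p-1$ and $b\geq p-1$. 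Rationality of the resulting class is automatic from the classical integrality of the Chern character.

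The main obstacle is this final cohomological reduction: producing explicit $\db$-primitives for the irreducible interlaced words. The bound $p<7$ enters precisely here, because for $p\geq 7$ genuinely new interlacing patterns appear---configurations with three or more $F_A^{0,2}$ factors mutually separated in the cyclic word---whose $d$-exactness cannot be derived from the quadratic identity $(F_A^{0,2})^{\wedge 2}=0$ alone, and would require the cubic identity $(F_A^{0,2})^{\wedge 3}=0$ coming from the stronger hypothesis $E\in IS_V^2$.
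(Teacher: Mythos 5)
Your overall skeleton---extract pointwise curvature identities from compatibility with every polarization, expand $\tr F_A^p$ into words in $F_A^{2,0},F_A^{1,1},F_A^{0,2}$, kill words by cyclicity of the trace and the identities, and transgress the survivors via the Bianchi identity---is the same as the paper's (Theorem \ref{thmA} is deduced from the second part of Proposition \ref{incpfA}). But your proposal has a genuine gap exactly where the paper does its real work. The sentence ``letting $F^L$ range over positive $(1,1)$-forms yields further differential identities\ldots to be invoked as necessary'' is not a proof step: the identities actually needed are $F_A^{0,2}\wedge\nabla^{0,1}F_A^{0,2}=0$ (Corollary \ref{peq2}), $F_A^{0,2}\wedge i_ZF_A^{0,2}=0$ and the commutativity statement (Propositions \ref{isv12} and \ref{is2com}), and above all $F_A^{0,2}\wedge F_A^{1,1}\wedge F_A^{0,2}=0$ and $F_{A;a}^{0,2}\wedge i_ZF_A^{0,2}=0$ (Proposition \ref{peq3}). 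Deriving these requires the subleading asymptotics of $\Pi_0^{2q+2}$ (the heat-kernel parametrix, Proposition \ref{SqV1}, and the charge analysis of Subsection \ref{fineH}) combined with differentiating the resulting constraints in the polarization (Section \ref{metricvar}); note that (\ref{hitr}) as stated gives only the leading coefficient, so nothing in your first paragraph actually produces them. Without these identities your reduction fails already at $p=4$: the interlaced word $\tr\bigl(F_A^{1,1}\wedge F_A^{0,2}\wedge F_A^{1,1}\wedge F_A^{0,2}\bigr)$, of type $(2,6)$, cannot be removed using only $(F_A^{0,2})^{\wedge2}=0$, cyclicity, and Bianchi; the paper kills it pointwise with $F_A^{0,2}\wedge F_A^{1,1}\wedge F_A^{0,2}=0$, and at $p=6$ the surviving word $\tr\bigl((F_A^{1,1})^2\wedge F_A^{0,2}\wedge(F_A^{1,1})^2\wedge F_A^{0,2}\bigr)$ requires the explicit transgression in the proof of Proposition \ref{incpfA}, which uses $F_A^{0,2}\wedge\p_AF_A^{0,2}=0$ in addition to Bianchi.

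Your explanation of the bound $p<7$ is also incorrect. First, $(F_A^{0,2})^{\wedge3}=0$ follows trivially from the quadratic identity, so it cannot be the missing ingredient; second, $IS_V^2$ is a \emph{weaker} hypothesis than $IS_V^1$ (the filtration grows with $q$), so it supplies no new constraints. Moreover, once $F_A^{0,2}\wedge F_A^{1,1}\wedge F_A^{0,2}=0$ is available, any word with three mutually separated $F_A^{0,2}$ factors and at most one $F_A^{2,0}$ factor has length at least $8$, so such configurations are not the obstruction at $p=7$. The actual bottleneck there is words with two $F_A^{0,2}$ factors separated by longer strings of $F_A^{1,1}$, such as $(F_A^{1,1})^2\wedge F_A^{0,2}\wedge(F_A^{1,1})^3\wedge F_A^{0,2}$ of type $(5,9)$, whose transgression produces terms not controlled by the identities obtainable from $IS_{V,3}^1$ compatibility.
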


For $q>1$, we have filtration compatibility for $p$ in a restricted range. (See Corollary \ref{corop1.5}.)
\begin{theorem}\label{thmB}
\[\ch_p(S_V^q\Vect(M))\subset (S_H^{p-q}\cap \bar S_H^{p-q})H^{2p}(M,\IQ),\,\,\,\forall p<q+3.\]
\end{theorem}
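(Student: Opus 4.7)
The plan is to extract a pointwise nilpotency condition on $F_A^{0,2}$ from $S_V^q$-compatibility and then use it to kill the relevant bidegree components of the Chern--Weil representative of $\ch_p(E)$. Pick an $S^q_{V,L,h}$-compatible connection $A$ on $E$, which we may take to be unitary for some Hermitian metric on $E$ (replacing $A$ by its unitary part leaves $\bar\partial_A$, hence $D_{A(k)}$, unchanged). The defining condition $s^{2j}=0$ for every $j>q$ and every $s\in\ker D_{A(k)}$ says $P_{2q+2}\Pi\equiv 0$ for $k$ sufficiently large; feeding $\Tr P_{2q+2}\Pi=0$ into the asymptotic \eqref{hitr} forces the leading coefficient to vanish, so
\[(F_A^{0,2})^{q+1}=0\quad\text{pointwise on }M,\]
and by unitarity also $(F_A^{2,0})^{q+1}=0$.

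Represent $\ch_p(E)$ by a nonzero constant multiple of $\tr F_A^p$. Writing $\alpha=F_A^{2,0}$, $\beta=F_A^{1,1}$, $\gamma=F_A^{0,2}$, decompose $\tr F_A^p=\sum_{a+b=2p}\omega^{(a,b)}$, where $\omega^{(a,b)}$ is the sum of traces of words in $\alpha,\beta,\gamma$ with multiplicities $n_\alpha,n_\beta,n_\gamma$ satisfying $n_\alpha+n_\beta+n_\gamma=p$ and $2n_\alpha+n_\beta=a$. Since $\ch_p(E)\in H^{2p}(M,\IQ)$ is real, complex conjugation interchanges the $(a,b)$- and $(b,a)$-components, so membership in $S_H^{p-q}$ automatically yields membership in $\bar S_H^{p-q}$. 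It therefore suffices to show $\omega^{(a,2p-a)}$ represents the zero class for every $a<p-q$. The constraint $2n_\alpha+n_\beta=a<p-q$ immediately gives $n_\gamma=p-n_\alpha-n_\beta\geq p-a>q$, so every word contributing to $\omega^{(a,2p-a)}$ contains at least $q+1$ letters $\gamma$.

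Under the assumption $p<q+3$, only $a\in\{0,1\}$ need to be considered. For $a=0$ the only word is $\gamma^p$ with $p\geq q+1$, and $\tr(\gamma^p)=0$ by nilpotency. For $a=1$ one is forced to $n_\alpha=0$, $n_\beta=1$, $n_\gamma=p-1\geq q+1$; the $p$ contributing words are the cyclic rotations of $\beta\gamma^{p-1}$, and because $\beta$ and $\gamma$ are matrix-valued forms of even form-degree the cyclic invariance of the trace collapses them to a single $p\,\tr(\beta\gamma^{p-1})$, which vanishes again by $\gamma^{p-1}=0$. Hence $\omega^{(a,2p-a)}$ vanishes \emph{as a form}, not merely in cohomology, and Theorem~\ref{thmB} follows.

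The genuine obstruction to pushing beyond $p=q+2$ appears at bidegree $(2,2p-2)$: one encounters traces of the form $\tr(\beta\gamma^i\beta\gamma^{p-2-i})$ with $1\leq i\leq q$, whose two $\gamma$-blocks each contain at most $q$ letters. Cyclic invariance alone does not permit amalgamation of the two blocks into a single $\gamma^{q+1}$, so pointwise nilpotency is insufficient. Killing these terms would require exhibiting exactness in Dolbeault cohomology, presumably using the Bianchi identities $\bar\partial_A\gamma=0$ and $\bar\partial_A\beta=-\partial_A\gamma$ together with the $\partial\bar\partial$-lemma on $M$; this is precisely the more delicate mechanism deployed for $IS_V^1$ in Theorem~\ref{thmA}.
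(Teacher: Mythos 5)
Your argument is correct and is essentially the paper's own proof: $S_V^q$-compatibility gives $\Tr P_{2q+2}\Pi=0$ for large $k$, so \eqref{hitr} forces $(F_A^{0,2})^{q+1}=0$ (Corollary \ref{corop1}), and the word-counting with cyclic invariance of the trace together with the Hodge-theoretic reduction to the form level is exactly the proof of Corollary \ref{corop1.5}. The only cosmetic difference is that you deduce the conjugate inclusion from the reality of $\ch_p(E)$, where the paper argues symmetrically with $F_A^{2,0}$.
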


It is generally easier to prove results about the quantized versions of our filtration than for $S_V^{\cdot}$ directly. In fact, the preceding results follow from computations of the implications of inclusion in $S^{\cdot}_{V,j}$ and $IS^{\cdot}_{V,j}$, $j=1, 2$ and $3$. It is also easier to establish functorial properties of the quantized filtrations. For example, we have the following theorem, which is an immediate consequence of Equation \ref{hitr}. 
\begin{theorem}If $M$ and $N$ are two smooth projective varieties, and $f\colon M\rightarrow N$ is holomorphic, then $E\in S^q_{V,1}\Vect(N)$ implies $f^*E\in S^q_{V,1}\Vect(M).$
\end{theorem}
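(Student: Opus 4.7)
The plan is to recast membership in $S^q_{V,1}$ as a pointwise identity on $F_A^{0,2}$ using (\ref{hitr}), and then observe that this identity is manifestly preserved under holomorphic pullback.

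First, I would reinterpret the defining spectral condition. Apply (\ref{hitr}) with $j = q+1$ (if $2(q+1)$ exceeds the complex dimension $n$ of $N$ then $\Tr P_{2q+2}\Pi$ vanishes identically and there is nothing to check). The leading coefficient of $k^{n-2q-2}$ is
\[\frac{1}{2^{n-2q-2}\pi^n((q+1)!)^2}\,\|(F_A^{0,2})^{\wedge(q+1)}\|_{L_2}^2,\]
so the defining estimate $\Tr P_{2q+2}\Pi = O(k^{n-2q-3})$ of an $S^q_{V,1,L,h}$-compatible connection is equivalent to the pointwise vanishing $(F_A^{0,2})^{\wedge(q+1)} \equiv 0$ on $N$. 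Crucially, this identity depends only on the complex structure of $N$ and not on the polarization; it is in the same spirit as the remark from the excerpt that $S^q_{V,1} = IS^q_{V,1}$.

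Next, I would pull back. Fix a connection $A$ on $E$ with $(F_A^{0,2})^{\wedge(q+1)}=0$, and let $f^*A$ be the pullback connection on $f^*E$. Then $F_{f^*A} = f^*F_A$, and since $f$ is holomorphic, $f^*$ preserves bidegree, so $(F_{f^*A})^{0,2} = f^*(F_A^{0,2})$. As pullback commutes with wedge product,
\[\bigl((F_{f^*A})^{0,2}\bigr)^{\wedge(q+1)} = f^*\bigl((F_A^{0,2})^{\wedge(q+1)}\bigr) = 0.\]

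Finally, choose any polarization $(L',h')$ of $M$; such a polarization exists since $M$ is projective. By the reformulation of the first step applied on $M$, the vanishing just established is exactly the condition that $f^*A$ be $S^q_{V,1,L',h'}$-compatible, so $f^*E \in S^q_{V,1,L',h'}\Vect(M) \subset S^q_{V,1}\Vect(M)$. No step here is a real obstacle: once (\ref{hitr}) is in hand, the theorem reduces to the triviality that $f^*$ commutes with extracting the $(0,2)$-part of curvature whenever $f$ is holomorphic, which matches the authors' description that the result is immediate from (\ref{hitr}).
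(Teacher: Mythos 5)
Your proposal is correct and follows the paper's intended route: the paper derives this theorem directly from Equation (\ref{hitr}), whose content is exactly your reformulation (made explicit in Corollary \ref{corop1}) that $S^q_{V,1}$-compatibility of $A$ is equivalent to the polarization-independent pointwise identity $(F_A^{0,2})^{\wedge(q+1)}=0$, which is manifestly preserved under holomorphic pullback. Nothing further is needed.
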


We have required $M$ to be projective in this discussion because the definitions of our filtrations required an ample holomorphic line bundle $L$. This is analogous to defining operations on cohomology only through the intermediary of harmonic forms. A metric free definition is essential for applications. Perhaps we should view the role of the polarization as providing 'enough' \emph{global} solutions to $D_{A(k)}s = 0$. More generally, we might define a filtration by requiring there to be 'enough' local solutions to $(\db_A+\db_A^*)s = 0$, with degree $s\leq 2q$, but this still requires a metric and seems unnatural. A completely metric free condition similar to $E\in IS^q_V$ is the following : there exists a local frame $\{s_a\}_a$ for $E$ so that $\db_A^{2q+1}s_a=0$ for all $a$. 

\begin{question}If $E\in IS_V^q \Vect(M)$, does there exist a local frame $\{s_a\}_a$ for $E$ so that $\db_A^{2q+1}s_a=0$ for all $a$? \end{question}
For $q=0$, the answer is, of course, yes. 

When $E$ is a holomorphic vector bundle (i.e. $E\in S_V^0\Vect(M)$), the Chern classes of $E$ are Poincar\'e dual to rational linear combinations of projective subvarieties.  Grothendieck's generalized Hodge conjecture suggests the following question.
\begin{question}If $E\in S_V^q \Vect(M,r)$, then is $c_p(E)$ Poincare dual to cycles supported in a finite union of codimension $p-q$ subvarieties?\end{question}
We have no evidence supporting a positive answer to this question.

The study of the large $k$-asymptotics of the Bergman kernel for $E\otimes L^k$  has been very fruitful when $E$ is holomorphic (for example, Donaldson \cite{DI}, Tian \cite{Ti2}, Wang \cite{W1}, Ma \cite{MM} and many others. See \cite{MM} for an extensive bibliography.) We were led to the structures examined here when investigating whether a similar analysis for nonholomorphic bundles might be used to improve our understanding of which bundles fail to admit a holomorphic structure. When we drop the assumption that $E$ is holomorphic, the natural analog of the Bergman kernel is the $L_2$ projection $\Pi$. See \cite{MM} for an extensive treatment of the Bergman kernel in both the holomorphic and nonholomorphic cases. In the holomorphic case, $\Pi$ is an endomorphism of sections of $E\otimes L^k$; in the nonholomorphic case, $\Pi$ is an endomorphism of $E\otimes L^k$ valued \emph{forms}. In particular, it defines maps $\Pi_0^q$ from sections to $E\otimes L^k$ valued $(0,2q)$ forms. For $q>0$, these maps lie deeper in the asymptotics of the Bergman kernel than have previously been computed, but are actually quite computable.  The above theorems all follow from computations of these asymptotics. It seems likely that  Conjecture \ref{conj1} can be proved to hold in a wider range by an extension of the computation of asymptotics which we have thus far undertaken. 

Unfortunately, the fact that the $q=0$ filtration stabilizes at $N(0) = 1$ limits the immediate application of this approach to discovering new obstructions to the existence of holomorphic structures. Nonetheless, we hope that these computations may be useful in other contexts.

\section{Approximating $\Pi$}\label{sec:approxPi}
 We retain the notation from the introduction. 
Fix a polarization $(L,h)$ for $M$. Let $\omega$ denote the corresponding K\"ahler form. We may then write 
\begin{equation}\label{admisscurv}F_{A(k)} = -ik\omega + F_A.\end{equation}
When we wish to emphasize the bundle rather than the connection, we will write $F^E$ for $F_A$. 

  Let $J$ denote the complex structure operator. Let $\{Z_j\}_{j=1}^m$ be a local frame for the holomorphic tangent bundle, and $\{w^j\}_{j=1}^m$ a dual coframe. We will be dealing with numerous curvature operators. 
Let $R$ denotes the curvature $2$ form induced by the Levi--Civita connection on the exterior algebra bundle. Set $F_{\bar j l} := F_{A(k)}(\bar Z_j, Z_l)+R(\bar Z_j, Z_l),$ and $\hat F_{\bar j l} := F_{A}(\bar Z_j, Z_l)+R(\bar Z_j, Z_l).$ 
 It is convenient to let $e(w)$ denote exterior multiplication on the left by the differential form $w$, and by $e^*(w)$, the adjoint operation. 
With this notation, a standard Lichnerowicz computation gives  
\begin{equation}\label{lich}D_{A(k)}^2 =  (\nabla^{0,1})^*\nabla^{0,1} - 2e(\bar w^{ j})e^*( \bar w^{ l})F_{\bar j l} + 2e(F_A^{0,2}) + 2e^*(F_A^{0,2}).\end{equation}

Expanding the second term on the right, we have on $(0,q)$ forms 
\begin{equation}\label{curvexp} - 2e(\bar w^{ j})e^*(\bar w^{ l})F_{\bar j l}
= 2kq  - 2e(\bar w^{ j})e^*(\bar w^{ l})\hat F_{\bar j l}.\end{equation}
 From (\ref{lich}) and (\ref{curvexp}), we see that 
on forms of odd degree, there exists $C_A>0$ independent of $k$ such that 
\begin{equation}\label{basest}\langle D_{A(k)}^2f,f\rangle_{L_2} \geq (2k-C_A)\|f\|^2.\end{equation}
The nonzero spectrum of $D_{A(k)}^2$ on even forms is the same as the nonzero spectrum of $D_{A(k)}^2$ on odd forms. 
Hence the spectrum of $D_{A(k)}^2$ on $E\otimes L^k$-valued even forms is contained in $\{0\}\cup [2k - C_A,\infty).$

The large spectral gap implies that for $k$ large and $Tk>>1$,  $e^{-TD_{A(k)}^2}$ is a good approximation to $\Pi$ in various operator norms, including the Trace, Hilbert--Schmidt, and supremum norms. For the convenience of the reader, we recall a few elementary features of the Trace and Hilbert--Schmidt norms, which we denote $\|\cdot\|_{Tr}$ and $\|\cdot\|_{HS}$ respectively.  For an operator $B$ with singular values $\lambda_j$,
\[\|B\|_{Tr} = \sum_{j}\lambda_j,\,\text{ and } \|B\|_{HS} = (\sum_{j}\lambda_j^2)^{\frac{1}{2}}.\]

If $B$ is given by integrating against a kernel $b(x,y)$, then 
\begin{equation}\label{hsn}\|B\|_{HS}^2 = \int \tr b^*(y,x)b(y,x)dydx.\end{equation} 
For bounded operators $A$ and $C$, 
\begin{equation}\label{trb}\|ABC\|_{Tr}\leq \|A\|_{sup}\|B\|_{Tr}\|C\|_{sup},\end{equation}
\begin{equation}\label{trhs}\|AB\|_{Tr}\leq \|A\|_{HS}\|B\|_{HS},\end{equation}
and
\begin{equation}\label{hsb}\|ABC\|_{HS}\leq \|A\|_{sup}\|B\|_{HS}\|C\|_{sup}.\end{equation}

The spectral gap and Equations (\ref{trb}) and (\ref{trhs}) imply for $k$ large and $Tk>>1$ that 
\begin{equation}\label{tr1}\|\Pi - e^{-TD_{A(k)}^2}\|_{Tr}\leq \frac{1}{k}\| D_{A(k)}^2e^{-TD_{A(k)}^2}\|_{Tr}\leq e^{-\frac{T}{2}k}\|e^{-\frac{T}{2}D_{A(k)}^2}\|_{Tr},\end{equation} 
and 
\begin{equation}\label{hs1}\|\Pi - e^{-TD_{A(k)}^2}\|_{HS}\leq \frac{1}{k}\| D_{A(k)}^2e^{-TD_{A(k)}^2}\|_{HS}\leq e^{-\frac{T}{2}k}\|e^{-\frac{T}{2}D_{A(k)}^2}\|_{HS}.\end{equation}
It follows from  Corollary \ref{hsq} and  Equation \ref{ptoqeq} that for $\frac{1}{4}\leq s\leq 1,$ $\exists c>0$ independent of $k$ large such that   $\|e^{-sk^{-1/2}D_{A(k)}^2}\|_{HS}\leq ck^{\frac{m}{2}}$, which immediately implies that $\|e^{-2sk^{-1/2}D_{A(k)}^2}\|_{Tr}\leq c^2k^m.$
Hence, the error in approximating $A\Pi C$ by $Ae^{-k^{-1/2}D_{A(k)}^2}C$, for $A$ and $C$ bounded
is $O(\|A\|_{sup}\|B\|_{sup}k^me^{-\frac{k^{1/2}}{2}})$ in the Hilbert--Schmidt, Trace, and supremum norms. 
 
 Equations (\ref{tr1}) and (\ref{hs1}) reduce  estimates of the errors introduced by replacing $\Pi$ by $e^{-k^{-1/2}D_{A(k)}^2}$ to estimates of heat kernels, and the problem of approximating $\Pi$ reduces to the familiar problem of approximating heat kernels.  Before embarking on the approximations, we first recall the standard estimates for the errors associated to heat kernel approximations. 

Let $k_t$ denote the heat kernel, meaning the Schwartz kernel for $e^{-tD_{A(k)}^2}$, and let $q_t$ denote an approximate kernel. Let $Q_t$ denote the operator corresponding to the kernel $q_t$. Set 
\[\epsilon_s := (\frac{\p}{\p s}+D_{A(k)}^2)q_s.\]
We write the difference of the  two kernels as 

\begin{equation}\label{duhamel}q_t-k_t = \int_0^tk_{t-s}\epsilon_sds.\end{equation}
Then 
\begin{equation}\label{duhamel2}\|Q_t-e^{-tD_{A(k)}^2}\|_{HS}\leq \int_0^t\|\epsilon_s\|_{HS}ds,\end{equation}
since $\|e^{-tD_{A(k)}^2}\|_{sup}\leq 1.$
In the next section we construct $q_t$ with $\epsilon_t$ small. 

\section{Approximate Heat Kernel}  
\subsection{Geodesic Coordinates} 
In this subsection and the next, for the convenience of the reader we gather elementary results on geodesic coordinates on Riemannian and K\"ahler manifolds. Let $x$ be geodesic normal coordinates centered at $y$, defined in some neighborhood $B_y$ of $y$. Let $r$ denote distance from $y$, and let $\frac{\p}{\p r}$ denote the radial vector field. In addition to the geodesic coordinate frame, it is useful to work with an orthonormal frame, 
$\{e_j\}_j$ satisfying $\nabla_{\frac{\p}{\p r}} e_j = 0,$ and at $y$, $e_j = \frac{\p}{\p x^j}$.  Define the operator 
\begin{equation}\label{definephi}\Phi(X):= \nabla_Xr\frac{\p}{\p r} - X.\end{equation}
It is easy to expand $\Phi$ recursively.  
\begin{align*}
r\nabla_{\frac{\p}{\p r}}\Phi(e_j) &= \nabla_{r\frac{\p}{\p r}} \nabla_{e_j}r \frac{\p}{\p r} \\
& = R (r \frac{\p}{\p r},e_j)r \frac{\p}{\p r} +   \nabla_{e_j}r \frac{\p}{\p r}  +   \nabla_{[r\frac{\p}{\p r},e_j]}r \frac{\p}{\p r} \\
& = R (r \frac{\p}{\p r},e_j)r \frac{\p}{\p r} +   e_j+\Phi(e_j) -   \nabla_{e_j+\Phi(e_j)}r \frac{\p}{\p r} \\
& = R (r \frac{\p}{\p r},e_j)r \frac{\p}{\p r}  -\Phi(e_j)-\Phi(\Phi(e_j)).	
\end{align*}
Hence, using the radially constant frame to define the integrals, we have 
\begin{equation}\Phi(e_j) = \frac{1}{r}\int_0^rs^2R(\frac{\p}{\p r},e_j)\frac{\p}{\p r}ds - \frac{1}{r}\int_0^r\Phi(\Phi(e_j))ds.\end{equation}
In particular, 
\begin{equation}\label{riemrad2}\Phi(X) =  \frac{r^2}{3}R(y)( \frac{\p}{\p r},X)\frac{\p}{\p r} 
+ \frac{r^3}{4}(\nabla_{\frac{\p}{\p r} }R)( \frac{\p}{\p r},X)\frac{\p}{\p r}+ O(r^4\nabla).
\end{equation}
Here we have Taylor expanded $R$ in a radially covariant constant frame. We write $R(y)$ for the radial parallel translation of $R$ from $y$. 

We may now compare our coordinate frame to the radially constant frame. (See for example \cite[Prop 1.28]{BGV}).
Compute 
\begin{align*}
r\frac{\p}{\p r}(e_jx^p) &= e_jx^p + [r\frac{\p}{\p r},e_j]x^p\\
 &= - \Phi(e_j)x^p \\
&= -\left[ \frac{r^2}{3}R(y)( \frac{\p}{\p r},e_j,\frac{\p}{\p r},e_m) 
+ \frac{r^3}{4}(\nabla_{\frac{\p}{\p r} }R)( \frac{\p}{\p r},e_j,\frac{\p}{\p r},e_m)\right]e_m(x^p)+ O(r^4).	
\end{align*} 
Hence
\[e_jx^p  = \delta_j^p +   \frac{1}{6}R(y)(r \frac{\p}{\p r},e_j,e_p,r\frac{\p}{\p r})  
+ \frac{1}{12}(\nabla_{r\frac{\p}{\p r} }R)(r \frac{\p}{\p r},e_j,e_p,r\frac{\p}{\p r}) + O(r^4),\]
and 
\begin{equation}\label{const2coord} e_j   = \frac{\p}{\p x^j}  +   \frac{1}{6}R(y)(r \frac{\p}{\p r},\frac{\p}{\p x^j},\frac{\p}{\p x^p},r\frac{\p}{\p r})\frac{\p}{\p x^p}  
+ \frac{1}{12}(\nabla_{r\frac{\p}{\p r} }R)(r \frac{\p}{\p r},\frac{\p}{\p x^j},\frac{\p}{\p x^p},r\frac{\p}{\p r})\frac{\p}{\p x^p} + O(r^4\nabla). \end{equation}
We write $O(r^4\nabla)$ above, instead of $O(r^4)$, to denote a vectorfield of magnitude $O(r^4).$
Inverting gives 
\begin{equation}\label{coord2const} \frac{\p}{\p x^j}   = e_j  -   \frac{1}{6}R(y)(r \frac{\p}{\p r},e_j,e_p,r\frac{\p}{\p r})e_p 
- \frac{1}{12}(\nabla_{r\frac{\p}{\p r} }R)(r \frac{\p}{\p r},e_j,e_p,r\frac{\p}{\p r})e_p + O(r^4\nabla). \end{equation}

The expansion for the metric in geodesic coordinates follows immediately from Equation (\ref{coord2const}) :
\begin{align}
\label{gij}g_{ij}(x) &= \delta_{ij} 
- \frac{1}{3}R(y)(\frac{\p}{\p x^i}, r\frac{\p}{\p r}, r\frac{\p}{\p r},\frac{\p}{\p x^j})
- \frac{1}{6}(\nabla_{r\frac{\p}{\p r} }R)(\frac{\p}{\p x^j},r \frac{\p}{\p r},r\frac{\p}{\p r},\frac{\p}{\p x^i})  +  O(r^4) ,\displaybreak[0]\\
\label{gammaij}\Gamma_{ij}^\mu(x) &= \frac{1}{3} R(y)(\frac{\p}{\p x^i}, r\frac{\p}{\p r},\frac{\p}{\p x^\mu} ,\frac{\p}{\p x^j})+   \frac{1}{3}R(y)(\frac{\p}{\p x^i}, \frac{\p}{\p x^\mu} ,r\frac{\p}{\p r},\frac{\p}{\p x^j})
  \\
&\quad \notag - \frac{1}{12} (\nabla_{e_j }R)(r \frac{\p}{\p r},\frac{\p}{\p x^\mu},\frac{\p}{\p x^i},r\frac{\p}{\p r}) - \frac{1}{12} (\nabla_{e_i }R)(r \frac{\p}{\p r},\frac{\p}{\p x^\mu},\frac{\p}{\p x^j},r\frac{\p}{\p r})+ \frac{1}{12} (\nabla_{\frac{\p}{\p x^\mu} }R)(r \frac{\p}{\p r},\frac{\p}{\p x^j},\frac{\p}{\p x^i},r\frac{\p}{\p r}) \\
&\quad \notag
 + \frac{1}{6} (\nabla_{r\frac{\p}{\p r} }R)(\frac{\p}{\p x^\mu},\frac{\p}{\p x^j},\frac{\p}{\p x^i},r\frac{\p}{\p r})
+  \frac{1}{6} (\nabla_{r\frac{\p}{\p r} }R)(r \frac{\p}{\p r},\frac{\p}{\p x^j},\frac{\p}{\p x^i},\frac{\p}{\p x^\mu})+O(r^3),\\
 \label{dvol}dv(x) &= \Bigl(1-\frac{1}{6}Ric(y)(r\frac{\p}{\p r},r\frac{\p}{\p r})+ O(r^3)\Bigr)dx^1\wedge\cdots\wedge dx^{2m}.
\end{align}

From these expansions, we also compute 
\begin{align*}
\Delta r^2 &= d^*d\sum_j(x^j-y^j)^2   = -2\sum_{i,j}e^*(dx^i)\nabla_{\frac{\p}{\p x^i}}((x^j-y^j)dx^j)\\
& = -2\sum_{j}e^*(dx^j) dx^j -2\sum_{i,j}(x^j-y^j)e^*(dx^i)\nabla_{\frac{\p}{\p x^i}}(dx^j)\\
&= -2\sum_{j}g^{jj}+2\sum_{i,j}(x^j-y^j)g^{il}\Gamma_{il}^j\\
&= -4m + \frac{2}{3}Ric(r\frac{\p}{\p r},r\frac{\p}{\p r}) +O(r^3). 
\end{align*}
In particular,
\begin{equation}\label{dr2}
4m+\Delta r^2 =  O(r^2).
\end{equation}

\subsection{Complex Structure in Normal Coordinates}
Let $J$ denote the complex structure operator.   
It is also convenient to define 
$J_0 \in C^{\infty}(B_y, \End(TM))$,  with $J_0^2 = -1$, as follows. Choose the geodesic coordinates to satisfy at $y$,
$J\frac{\p}{\p x^j} = \frac{\p}{\p x^{j+m}}$, for $1\leq j\leq m$. Define $J_0$ to extend this relation to all of $B_y$: 
\[J_0\frac{\p}{\p x^j} = \frac{\p}{\p x^{j+m}},\quad\text{ for } 1\leq j\leq m.\]
We introduce $J_0$-complex coordinates, $z^j = x^j-y^j+ i(x^{j+m}-y^{j+m}),$ $ 1\leq j\leq m$. Then 
\[r\frac{\p}{\p r} = z^l\frac{\p}{\p z^l} + \bar z^l\frac{\p}{\p \bar z^l}.\] 
Let $\{f_a:=\frac{1}{2}(e_a-ie_{a+m})\}_{a=1}^m$ be a radially covariant constant frame of the holomorphic tangent bundle, with dual coframe $\{\eta^a\}_{a=1}^m$.
Let $f_{\bar a} = \bar f_a,$ and $\eta^{\bar a}= \bar\eta^a.$ Then 
from Equation (\ref{const2coord}) we have 
\begin{equation}\label{cxconst2coord}\begin{split} f_a  & = \frac{\p}{\p z^a}  +  \frac{1}{3}R(y)(r \frac{\p}{\p r},\frac{\p}{\p z^a},\frac{\p}{\p z^p},r\frac{\p}{\p r})\frac{\p}{\p \bar z^p}  +  \frac{1}{3}R(y)(r \frac{\p}{\p r},\frac{\p}{\p z^a},\frac{\p}{\p \bar z^p},r\frac{\p}{\p r})\frac{\p}{\p z^p}  \\
&\quad + \frac{1}{6}(\nabla_{r\frac{\p}{\p r} }R)(r \frac{\p}{\p r},\frac{\p}{\p z^a},\frac{\p}{\p z^p},r\frac{\p}{\p r})\frac{\p}{\p \bar z^p}
+ \frac{1}{6}(\nabla_{r\frac{\p}{\p r} }R)(r \frac{\p}{\p r},\frac{\p}{\p z^a},\frac{\p}{\p \bar z^p},r\frac{\p}{\p r})\frac{\p}{\p z^p} + O(r^4\nabla), \end{split}\end{equation}
and 
\begin{equation}\label{cxcoord2const}
\begin{split} \frac{\p}{\p z^a} &= f_a  -  \frac{1}{3}R(y)(r \frac{\p}{\p r},\frac{\p}{\p z^a},\frac{\p}{\p z^p},r\frac{\p}{\p r})f_{\bar p}  -  \frac{1}{3}R(y)(r \frac{\p}{\p r},\frac{\p}{\p z^a},\frac{\p}{\p \bar z^p},r\frac{\p}{\p r})f_p  \\
&\quad
- \frac{1}{6}(\nabla_{r\frac{\p}{\p r} }R)(r \frac{\p}{\p r},\frac{\p}{\p z^a},\frac{\p}{\p z^p},r\frac{\p}{\p r})f_{\bar p}
- \frac{1}{6}(\nabla_{r\frac{\p}{\p r} }R)(r \frac{\p}{\p r},\frac{\p}{\p z^a},\frac{\p}{\p \bar z^p},r\frac{\p}{\p r})f_p + O(r^4\nabla). \end{split}\end{equation}

It is convenient to express $\db$ in a mixture of coordinate and radially covariant constant frames. We have 
\begin{equation}\label{db2d}
\begin{split}
\db = \bar \eta^a\Bigl[&
\nabla_{\frac{\p}{\p \bar z^a}}   
+  \frac{1}{3}R(y)(r \frac{\p}{\p r},\frac{\p}{\p \bar z^a},\frac{\p}{\p  z^p},r\frac{\p}{\p r})\nabla_{\frac{\p}{\p \bar z^p} } +  \frac{1}{3}R(y)(r \frac{\p}{\p r},\frac{\p}{\p \bar z^a},\frac{\p}{\p \bar z^p},r\frac{\p}{\p r})\nabla_{\frac{\p}{\p  z^p}} 
\\
&+ \frac{1}{6}(\nabla_{r\frac{\p}{\p r} }R)(r \frac{\p}{\p r},\frac{\p}{\p \bar z^a},\frac{\p}{\p z^p},r\frac{\p}{\p r})\nabla_{\frac{\p}{\p \bar z^p}}
+ \frac{1}{6}(\nabla_{r\frac{\p}{\p r} }R)(r \frac{\p}{\p r},\frac{\p}{\p \bar z^a},\frac{\p}{\p\bar z^p},r\frac{\p}{\p r})\nabla_{\frac{\p}{\p  z^p}}
\Bigr] + O(r^4\nabla).
\end{split}
\end{equation}

From (\ref{cxcoord2const}) and its conjugate, we can compute the difference between $J$ and $J_0$. We find
\[ (J-J_0)\frac{\p}{\p z^a}=   \frac{2i}{3}R(y)(r \frac{\p}{\p r},\frac{\p}{\p z^a},\frac{\p}{\p z^p},r\frac{\p}{\p r})\frac{\p}{\p \bar z^p} + \frac{i}{3}(\nabla_{r\frac{\p}{\p r} }R)(r \frac{\p}{\p r},\frac{\p}{\p z^a},\frac{\p}{\p z^p},r\frac{\p}{\p r})\frac{\p}{\p \bar z^p} + O(r^4\nabla). \]

Hence 
\begin{equation}\label{jj0}J = J_0 + \frac{2i}{3} R(y)(z^b\frac{\p}{\p z^b},\frac{\p}{\p \bar z^\mu},z^c\frac{\p}{\p z^c},\cdot) \frac{\p}{\p z^\mu}
- \frac{2i}{3} R(y)(\bar z^b\frac{\p}{\p \bar z^b},\frac{\p}{\p z^\mu},\bar z^c\frac{\p}{\p \bar z^c}, \cdot) \frac{\p}{\p \bar z^\mu}
 + \delta J,
\end{equation}
with
\begin{equation}\label{deltaJ}\delta J =  \frac{i}{3} (\nabla_{r\frac{\p}{\p r} }R)(z^b\frac{\p}{\p z^b}, \frac{\p}{\p \bar z^\mu}, z^c\frac{\p}{\p z^c}, \cdot) \frac{\p}{\p z^\mu}
- \frac{i}{3} (\nabla_{r\frac{\p}{\p r} }R)(\bar z^b\frac{\p}{\p \bar z^b}, \frac{\p}{\p z^\mu}, \bar z^c\frac{\p}{\p \bar z^c},\cdot) \frac{\p}{\p \bar z^\mu}+O(r^4). 
\end{equation}

As a useful special case, we note that 
\begin{multline}\label{jj0rdr}
(J-J_0)r\frac{\p}{\p r} = \frac{2i}{3} R(y)(z^b\frac{\p}{\p z^b},\frac{\p}{\p \bar z^\mu},z^l\frac{\p}{\p z^l},\bar z^c\frac{\p}{\p \bar z^c}) \frac{\p}{\p z^\mu}
- \frac{2i}{3} R(y)(\bar z^b\frac{\p}{\p \bar z^b},\frac{\p}{\p z^\mu},\bar z^l\frac{\p}{\p \bar z^l},z^c\frac{\p}{\p  z^c} ) \frac{\p}{\p \bar z^\mu}\\
 +\frac{i}{3} (\nabla_{r\frac{\p}{\p r} }R)(z^b\frac{\p}{\p z^b}, \frac{\p}{\p \bar z^\mu}, z^l\frac{\p}{\p z^l},\bar z^c\frac{\p}{\p \bar z^c}) \frac{\p}{\p z^\mu}
- \frac{i}{3} (\nabla_{r\frac{\p}{\p r} }R)(\bar z^b\frac{\p}{\p \bar z^b}, \frac{\p}{\p z^\mu}, \bar z^l\frac{\p}{\p \bar z^l},z^c\frac{\p}{\p  z^c} ) \frac{\p}{\p \bar z^\mu}+O(r^5\nabla). 
\end{multline} 
 We summarize these computations with the following lemma relating $J$ and $J_0$. 
\begin{lemma}\label{JJ0}There exists $A_{bl}\in C^{\infty}(B_y, \Hom(T^{0,1},T^{1,0}))$ and $B_{\bar b\bar l}\in C^{\infty}(B_y,\Hom(T^{1,0},T^{0,1}))$ for which
\[J = J_0 + z^bz^lA_{bl} + \bar z^b\bar z^lB_{\bar b\bar l} + O(r^3).\]
\end{lemma}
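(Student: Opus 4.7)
The plan is to read the lemma off directly from Equation (\ref{jj0}) after identifying the coefficients $A_{bl}$ and $B_{\bar b\bar l}$ and absorbing the remainder $\delta J$ into the $O(r^3)$ error. The key geometric input is the K\"ahler symmetry of the Riemann curvature tensor; everything else is substitution.

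Concretely, I propose to take as constant (in $x$) tensors
\[A_{bl}:=\frac{2i}{3}\,R(y)(\partial_{z^b},\partial_{\bar z^\mu},\partial_{z^l},\cdot)\,\partial_{z^\mu},\qquad B_{\bar b\bar l}:=-\frac{2i}{3}\,R(y)(\partial_{\bar z^b},\partial_{z^\mu},\partial_{\bar z^l},\cdot)\,\partial_{\bar z^\mu},\]
so that the two explicit curvature contributions to (\ref{jj0}) are precisely $z^b z^l A_{bl}$ and $\bar z^b\bar z^l B_{\bar b\bar l}$. To verify that $A_{bl}$ actually lies in $\Hom(T^{0,1},T^{1,0})$ (and symmetrically for $B_{\bar b\bar l}$), I would invoke the K\"ahler symmetries of $R$: $R(X,Y)$ vanishes unless $X$ and $Y$ have opposite bidegrees, and $R(X,Y)$ preserves each of $T^{1,0}$ and $T^{0,1}$. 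Equivalently, $R(X,Y,Z,W)$ is nonzero only when the pair $(X,Y)$ is of opposite bidegree and the pair $(Z,W)$ is of opposite bidegree. In the formula for $A_{bl}$ the first three slots are of types $(1,0),(0,1),(1,0)$, which forces the input slot $\cdot$ to be of type $(0,1)$, while the output $\partial_{z^\mu}$ is manifestly in $T^{1,0}$. Hence $A_{bl}\in\Hom(T^{0,1},T^{1,0})$; with bidegrees reversed the same argument yields $B_{\bar b\bar l}\in\Hom(T^{1,0},T^{0,1})$.

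It then remains to show $\delta J=O(r^3)$. From (\ref{deltaJ}) each of the two explicit terms combines an already-quadratic prefactor $z^bz^c$ (respectively $\bar z^b\bar z^c$) with a factor $(\nabla_{r\partial_r}R)(\ldots)$, which contributes one additional factor of position through the $r\partial_r$ argument; so each such term is $O(r^3)$. Combined with the explicit $O(r^4)$ tail already written in (\ref{deltaJ}), this gives $\delta J=O(r^3)$, which is absorbed into the error term of the lemma.

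The only substantive verification here is the K\"ahler-type check in the second paragraph; no genuine obstacle is expected. A minor subtlety worth noting is that the decomposition $T=T^{1,0}\oplus T^{0,1}$ could be taken with respect to either $J$ or $J_0$, but since these agree to leading order and $A_{bl}$, $B_{\bar b\bar l}$ enter the expansion multiplied by the order-$r^2$ factors $z^b z^l$ or $\bar z^b \bar z^l$, the ambiguity contributes only at order $r^4$, which is harmlessly absorbed into $O(r^3)$.
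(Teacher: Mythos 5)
Your proposal is correct and matches the paper's (implicit) argument: the lemma is simply a summary of Equations (\ref{jj0}) and (\ref{deltaJ}), with the quadratic curvature terms read off as $z^bz^lA_{bl}$ and $\bar z^b\bar z^lB_{\bar b\bar l}$ and $\delta J$ absorbed into $O(r^3)$ since its explicit terms carry a quadratic prefactor times $\nabla_{r\frac{\p}{\p r}}R$. Your added verification via the K\"ahler symmetries of $R$ that the coefficients land in $\Hom(T^{0,1},T^{1,0})$ and $\Hom(T^{1,0},T^{0,1})$, and your remark that the $J$ versus $J_0$ ambiguity in the type decomposition is absorbed at higher order, are exactly the points the paper leaves implicit.
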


We will also need the following elementary lemma. 
\begin{lemma}\label{Jric}
For all vector fields $X$,
\begin{align*}
Ric(X, JX) &= 0, \text{ and}\\
Ric(r\frac{\p}{\p r}, r\frac{\p}{\p r}) &= 2z^l\bar z^jRic(\frac{\p}{\p z^l},\frac{\p}{\p \bar z^j})+O(r^4).	
\end{align*}
\end{lemma}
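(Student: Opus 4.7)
The plan is to establish the two assertions separately, leveraging the K\"ahler identity $\nabla J = 0$ together with the coordinate-to-frame expansions developed in \S 3.2.

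For the first identity, I would invoke two standard consequences of $\nabla J = 0$: $Ric$ is symmetric, and $Ric$ is $J$-invariant. Together these yield
\[
Ric(X, JX) = Ric(JX, J^2 X) = -Ric(JX, X) = -Ric(X, JX),
\]
so $Ric(X, JX) = 0$. Extending $Ric$ $\IC$-bilinearly, the same argument with $JZ = iZ$ for $Z\in T^{1,0}$ shows $Ric(Z,W) = 0$ whenever $Z$ and $W$ are both of $J$-type $(1,0)$, and similarly for type $(0,1)$. This ``$(1,1)$-purity'' of $Ric$ is what drives the second identity.

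For the second identity, I would decompose $r\frac{\p}{\p r} = z^l\frac{\p}{\p z^l} + \bar z^l\frac{\p}{\p \bar z^l}$ and use symmetry of $Ric$ to write
\[
Ric(r\tfrac{\p}{\p r},r\tfrac{\p}{\p r}) = z^l z^j Ric(\tfrac{\p}{\p z^l},\tfrac{\p}{\p z^j}) + 2z^l\bar z^j Ric(\tfrac{\p}{\p z^l},\tfrac{\p}{\p \bar z^j}) + \bar z^l\bar z^j Ric(\tfrac{\p}{\p \bar z^l},\tfrac{\p}{\p \bar z^j}).
\]
The target formula amounts to showing that the first and third summands are $O(r^4)$, and by complex conjugation it suffices to bound the first. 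The key observation is that the radially covariantly constant frame $\{f_a\}$ is $J$-holomorphic throughout $B_y$: since $\nabla J = 0$, the section $Jf_a - if_a$ is parallel along radial geodesics and vanishes at $y$ by the construction of the geodesic coordinates. Consequently $f_a$ is $J$-$(1,0)$ and $f_{\bar a}$ is $J$-$(0,1)$ on all of $B_y$. Equation (\ref{cxcoord2const}) can then be read as
\[
\tfrac{\p}{\p z^a} = \bigl(f_a + O(r^2)\,f_p\bigr) + O(r^2)\,f_{\bar p},
\]
a $J$-$(1,0)$ section plus an $O(r^2)$ multiple of a $J$-$(0,1)$ section. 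Substituting into $Ric(\frac{\p}{\p z^l},\frac{\p}{\p z^j})$, the pure $(1,0)$-$(1,0)$ contributions vanish by the purity property established above, and the surviving cross terms are $O(r^2)$. Multiplying by $z^lz^j = O(r^2)$ delivers the required $O(r^4)$ bound.

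I do not anticipate any serious obstacle: the first assertion is a two-line K\"ahler identity, and the second reduces to combining the radial $J$-invariance of the frame $\{f_a\}$ with the expansion (\ref{cxcoord2const}). The only point requiring a bit of care is tracking the basepoint at which ``type $(1,0)$'' is measured when decomposing $\frac{\p}{\p z^a}$, since $J \neq J_0$ away from $y$; working consistently in the $\{f_a\}$ frame, whose $J$-holomorphicity propagates along radial geodesics, cleanly eliminates this ambiguity.
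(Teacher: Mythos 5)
Your proof is correct. It differs from the paper's in mechanics, though both rest on the same $O(r^2)$ type-discrepancy near $y$. For the second identity the paper writes $Ric(z^a\frac{\p}{\p z^a},z^b\frac{\p}{\p z^b})=-i\,Ric(z^a\frac{\p}{\p z^a},J_0z^b\frac{\p}{\p z^b})$, swaps $J_0$ for $J$ at an $O(r^4)$ cost using Lemma \ref{JJ0} ($J-J_0=O(r^2)$, with the two factors of $z$ supplying the extra $O(r^2)$), and then kills the main term by the complexified first identity; you never compare $J$ and $J_0$ as endomorphisms, but instead propagate the $J$-holomorphy of the radially parallel frame $\{f_a\}$ along radial geodesics (valid, since $Jf_a-if_a$ is radially parallel and vanishes at $y$), read (\ref{cxcoord2const}) as saying $\frac{\p}{\p z^a}$ is $J$-type $(1,0)$ up to $O(r^2)$, and invoke the $(1,1)$-purity of $Ric$ pointwise to get $Ric(\frac{\p}{\p z^l},\frac{\p}{\p z^j})=O(r^2)$ before multiplying by $z^lz^j$. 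Since Lemma \ref{JJ0} is itself extracted from (\ref{cxcoord2const}), the ingredients are close cousins, but your route yields the slightly stronger pointwise statement $Ric(\frac{\p}{\p z^l},\frac{\p}{\p z^j})=O(r^2)$ and is self-contained in the parallel frame. One further point in your favor: your explicit appeal to the $J$-invariance of $Ric$ (a standard consequence of $\nabla J=0$) does genuine work in the first identity as well --- symmetry alone only cancels the mixed terms $Ric(X^{1,0},X^{0,1})$, and it is the $(1,1)$-character of $Ric$ that removes $Ric(X^{1,0},X^{1,0})-Ric(X^{0,1},X^{0,1})$ --- so your two-line argument is the complete version of the computation the paper states more tersely.
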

\begin{proof}
Write $X = X^{1,0} + X^{0,1}$, with $JX^{1,0} = iX^{1,0}$ and $JX^{0,1} = -iX^{0,1}.$ Then 
since $Ric$ is symmetric,
\[Ric(X, JX) = iRic(X^{1,0} + X^{0,1}, X^{1,0} - X^{0,1})=0.\] 

For the second claim, we write   
\[Ric(z^a\frac{\p}{\p z^a}, z^b\frac{\p}{\p z^b}) = -iRic(z^a\frac{\p}{\p z^a}, J_0z^b\frac{\p}{\p z^b})=
-iRic(z^a\frac{\p}{\p z^a}, Jz^b\frac{\p}{\p z^b})+O(r^4)= O(r^4).\]
Similarly $Ric(\bar z^a\frac{\p}{\p \bar z^a}, \bar z^b\frac{\p}{\p \bar z^b}) =  O(r^4),$
and the claim follows upon expanding $r\frac{\p}{\p r} = z^a\frac{\p}{\p z^a} + \bar z^a\frac{\p}{\p \bar z^a}.$
\end{proof}

\subsection{Parallel Transport}
Before we construct an approximate heat kernel, it is useful to make a few observations about parallel translation. Let $S:= \Lambda^{0,\even}\otimes E\otimes L^k$, and $\pi_j\colon M\times M\rightarrow M$, $j=1$ or $2$, denote the projection onto the first or second factor.  Let the $j$-lift of $X$ be the unique element of $(d\pi_j)^{-1}(X)\cap \ker(d\pi_{j\pm 1})$. When no confusion will result, we will use the same symbol to denote a vector and its lift. A Schwartz kernel, $q_t$, for an approximate heat kernel is a section of the bundle \[V:= \Hom(\pi_2^*S,\pi_1^*S).\] 
 Thus $q_t(x,y)\in \Hom(S_y,S_x).$  In describing such kernels it is useful to identify $S_x$ and $S_y$ via parallel translation along distance minimizing geodesics. The kernel we construct will be supported in a small neighborhood of the diagonal in $M\times M$, making such an identification unique. Let $\gamma_{x,y}\colon [0,d(x,y)]\rightarrow M$ be the minimal unit speed geodesic from $y$ to $x$. 
Let $\psi(x,y)$ denote parallel translation  of $\psi\colon S_y\rightarrow S_x$ along $\gamma_{x,y}$. Let  $\frac{\p}{\p r_1}$ and $\frac{\p}{\p r_2}$  in $T_{x,y}(M\times M)$ denote the canonical $1$- and $2$- lifts to $M\times M$ of $\gamma_{x,y}'(d(x,y))$ and $-\gamma_{x,y}'(0)$ respectively. By definition, 
\begin{itemize}
\item $\psi(x,x) = \text{identity},$ and
\item $\nabla_{\frac{\p}{\p r_j }}\psi =0,\,\,\,j=1,2.$
\end{itemize}

It is convenient to factor $\psi$ as $\psi = \psi_{LC}\otimes \psi_E\otimes\psi_L=:\hat\psi\otimes \psi_L$, where $\psi_{LC}$, $\psi_{E}$, and $\psi_{L}$ denote parallel translation of sections of $\Lambda^{0,\even}$, $E$, and $L$ respectively. 
 The local geometry is largely encoded in $\psi$; hence we record some of its properties before constructing approximate kernels. 
 
Let $(x^i)$ be geodesic local coordinates on $M$ with center $y$. We will not distinguish between $\frac{\p}{\p x^i}$ and its canonical 1-lift to $M\times M$.  We will also set $\frac{\p}{\p r } = \frac{\p}{\p r_1 }$.  For a vector field $X$ on $M\times M$, we let $\psi_{;X}$ denote $\nabla_X\psi$. For coordinate vector fields $\frac{\p}{\p x^i}$, we abbreviate this covariant derivative to $\psi_{;i}$ (and similarly interpret $\psi_{L;i}$, etc.). 
\begin{lemma}  For derivatives of $\psi_L$, we have
\begin{equation}\label{psiLi}
\psi_{L}^{-1}\psi_{L;i}(x,y) = -\frac{ik}{2} g( Jr\frac{\p}{\p r},\frac{\p}{\p x^i})+ \psi_{L}^{-1}\delta\psi_{L;i},	
\end{equation}
with 
\begin{equation}\label{delpsi}
\psi_{L}^{-1}\delta\psi_{L;i}= -k\frac{ z^a \bar z^b }{12}R(y)(\frac{\p}{\p x^i},r\frac{\p}{\p r}, \frac{\p}{\p z^a},\frac{\p}{\p \bar z^b})-\frac{kz^a\bar z^b }{20}(\nabla_{r\frac{\p}{\p r}}R)(\frac{\p}{\p x^i},r\frac{\p}{\p r},\frac{\p}{\p z^a},\frac{\p}{\p \bar z^b})  + O(kr^5).	
\end{equation}
For derivatives of $\hat\psi$, we have
\begin{equation}\label{nablapsihat}
\hat\psi^{-1}\hat\psi_{;i} =  \frac{1}{2} F^{E}(y)(r \frac{\p}{\p r},\frac{\p}{\p x^i})+ \frac{1}{2} R(y)(r \frac{\p}{\p r},\frac{\p}{\p x^i})+ \hat\psi^{-1} \delta\hat \psi_{;i},	
\end{equation}
with \begin{equation}\label{delpsihat}\hat\psi^{-1} \delta\hat \psi_{;i}\in O(r^2).\end{equation} 
\end{lemma}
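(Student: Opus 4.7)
The plan is to derive both formulas by working in the \emph{radial (Fock--Schwinger) gauge} trivializations of $L^k$ and of $\Lambda^{0,\text{even}}\otimes E$ centered at $y$. In such a trivialization parallel transport along the distance-minimizing geodesic from $y$ to any point $x\in B_y$ becomes the identity, so in these frames $\psi_L(x,y)\equiv 1$ and $\hat\psi(x,y)\equiv I$ along the geodesic. Therefore, the covariant derivatives $\psi_{L;i}$ and $\hat\psi_{;i}$ along a coordinate vector at $x$ (lifted to the first factor) reduce in these gauges to the values of the connection 1-form at $x$. The standard radial-gauge identity, obtained by integrating the ODE $i_{r\partial/\partial r}F=\mathcal L_{r\partial/\partial r}A$ with $A(r\partial/\partial r)\equiv0$ along the ray from $y$ to $x$, yields the Poincar\'e-type formula
\[
A_i(x) \;=\; -\int_0^1 t\,x^\mu\,F_{i\mu}(tx)\,dt
\]
for any connection with curvature $F$.

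For the line bundle, substitute $F^{L^k}=-ik\omega$ from (\ref{admisscurv}) to obtain $\psi_L^{-1}\psi_{L;i}(x,y)=ik\int_0^1 t\,x^\mu \omega_{i\mu}(tx)\,dt$, and Taylor expand $\omega$ at $y$. The key point is that $\omega$ is parallel on a K\"ahler manifold ($\nabla\omega=0$), so in the geodesic normal coordinates the first-order Taylor coefficient of $\omega_{i\mu}$ at $y$ vanishes; the leading contribution is $\tfrac{ik}{2}\omega(y)(\partial_i, r\partial/\partial r) = -\tfrac{ik}{2}\,g(Jr\partial/\partial r,\partial/\partial x^i)$ by the K\"ahler identity $\omega(X,Y)=g(JX,Y)$. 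This produces (\ref{psiLi}). To obtain the corrections in (\ref{delpsi}), apply $\nabla^2\omega=0$ and $\nabla^3\omega=0$ at $y$ to express $\partial_\nu\partial_\rho\omega_{i\mu}(y)$ and $\partial_\nu\partial_\rho\partial_\sigma\omega_{i\mu}(y)$ as contractions of $\omega(y)$ with the derivatives of the Christoffel symbols, which are given explicitly in terms of $R(y)$ and $\nabla R(y)$ by (\ref{gammaij}). Because $\omega$ is of pure type $(1,1)$, i.e.\ $\omega=ig_{a\bar b}\,dz^a\wedge d\bar z^b$, these Riemann contractions collapse onto terms of the form $R(\cdot,\cdot,\partial/\partial z^a,\partial/\partial \bar z^b)$ paired with $z^a\bar z^b$. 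The integrals $\int_0^1 t^3\,dt$ and $\int_0^1 t^4\,dt$ together with the factor $\tfrac{1}{3}$ from $\partial\Gamma$ in (\ref{gammaij}) combine to give the announced coefficients $\tfrac{1}{12}$ and $\tfrac{1}{20}$, and the fourth-order Taylor remainder supplies the error $O(kr^5)$.

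For $\hat\psi=\psi_{LC}\otimes\psi_E$ the curvature on $\Lambda^{0,\text{even}}\otimes E$ is $F^E+R$, and the same radial-gauge identity produces
\[
\hat\psi^{-1}\hat\psi_{;i}(x,y) \;=\; -\int_0^1 t\,x^\mu\,(F^E+R)_{i\mu}(tx)\,dt
\;=\;\tfrac12\,F^E(y)(r\tfrac{\partial}{\partial r},\tfrac{\partial}{\partial x^i}) + \tfrac12\,R(y)(r\tfrac{\partial}{\partial r},\tfrac{\partial}{\partial x^i}) + O(r^2),
\]
which is (\ref{nablapsihat}) with $\hat\psi^{-1}\delta\hat\psi_{;i}\in O(r^2)$ as in (\ref{delpsihat}). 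No parallelism hypothesis is needed because only the leading term is isolated and the remainder is lumped into $\delta\hat\psi_{;i}$.

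The main obstacle is the precise bookkeeping required for (\ref{delpsi}): combining the integrals $\int_0^1 t^\ell\,dt$ for $\ell=3,4$, the $\tfrac{1}{3}$ and $\tfrac{1}{6}$ in the normal-coordinate expansions of $\Gamma$ and $\partial\Gamma$ from (\ref{gammaij}), the various symmetries of $R$ (Bianchi identity, $R_{abcd}=R_{cdab}$), and the reduction to complex coordinates forced by the $(1,1)$-type of $\omega$. No individual step is deep, but the passage from the coefficient $\tfrac{1}{8}\cdot\tfrac{1}{3}\cdot(\text{symmetrization factor})$ to the clean numbers $\tfrac{1}{12}$ and $\tfrac{1}{20}$ must be executed without arithmetic slips.
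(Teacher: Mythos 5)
Your core identity is the same one the paper uses: $\psi_L^{-1}\psi_{L;i}$ is a connection coefficient in the radial gauge, and integrating $\mathcal{L}_{r\p/\p r}A=i_{r\p/\p r}F$ along radial geodesics gives the Fock--Schwinger formula. The paper packages this as the recursion (\ref{recurgamma}) in the radially \emph{parallel} orthonormal frame, where the parallel curvature $-ik\omega$ has literally constant components, so all corrections enter through the operator $\Phi$ of (\ref{definephi}) and arrive as the $\tfrac{1}{24}$, $\tfrac{1}{40}$ terms of (\ref{okr5}), which become $\tfrac{1}{12}$, $\tfrac{1}{20}$ on passing to complex coordinates. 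You work instead in the coordinate frame and push the corrections into the Taylor expansion of $\omega_{\mu i}$ in normal coordinates; that is a legitimate and essentially equivalent route of comparable difficulty, and your treatment of $\hat\psi$ is fine (modulo the harmless point that in the non-abelian case the integrand is the conjugated curvature $\psi^{-1}(F^E+R)\psi$, which only affects orders you absorb into $O(r^2)$).

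There is, however, one concrete slip that would change the answer if executed as written: you take the leading term to be $\tfrac{ik}{2}\omega(y)(\p/\p x^i,r\p/\p r)$, i.e.\ with coefficients frozen at $y$, whereas the first term of (\ref{psiLi}) is the exact $-\tfrac{ik}{2}g(Jr\p/\p r,\p/\p x^i)$ evaluated at $x$. This is not cosmetic: the later construction splits precisely this term as $ikrJ_0\p/\p r$ plus $ikr(J-J_0)\p/\p r$, the latter being fed into $H_h$, so the lemma must be anchored at $x$. The discrepancy $-\tfrac{ik}{2}x^\mu\bigl[\omega_{\mu i}(x)-\omega_{\mu i}(y)\bigr]$ is itself a nonzero curvature term of order $kr^3$ --- exactly the order of (\ref{delpsi}) --- so a remainder defined relative to the frozen term cannot have the stated coefficients $\tfrac{1}{12}$ and $\tfrac{1}{20}$. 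The fix inside your framework is simply to define $\psi_L^{-1}\delta\psi_{L;i}:=A_i(x)+\tfrac{ik}{2}x^\mu\omega_{\mu i}(x)$ before converting the second- and third-order Taylor coefficients of $\omega_{\mu i}$ into $R(y)$ and $\nabla R(y)$ via (\ref{gij}), (\ref{gammaij}) and Lemma \ref{JJ0}; with that anchoring (and the full bookkeeping you defer), your approach does reproduce (\ref{delpsi}).
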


\begin{proof}
Let $F^V=\pi_1^*F^S-\pi_2^*F^S$ denote the curvature of $V$ induced by the connection on $S$.
Let $\{e_j\}_{j=1}^{2m}$ the 1-lift of a local tangent frame covariant constant along radial geodesics centered at $y$, with $e_1 = \frac{\p}{\p r}$.  The assumption that $\nabla_{\frac{\p}{\p r}}\psi(x,y) = 0$ allows us to write
\begin{equation}\label{difeq1}F^V( r\frac{\p}{\p r},e_j)(x,y)\psi(x,y) =  \nabla_{r\frac{\p}{\p r}}\nabla_{e_j}\psi  + \nabla_{[e_j,r\frac{\p}{\p r}]}\psi
=   \nabla_{ \frac{\p}{\p r}}r\nabla_{e_j}\psi  + \nabla_{\Phi(e_j)}\psi,\end{equation}
 Note because $r\frac{\p}{\p r}$ and $e_j$ are $1$-lifts, we have $F^V( r\frac{\p}{\p r},e_j)(x,y) = F^S(r\frac{\p}{\p r},e_j)(x)$ if we use the same symbol to denote a vector and its lift. Write $\nabla_X\psi = \psi\Gamma^V(X).$   With this notation, we rewrite (\ref{difeq1}) as 
\begin{equation}\psi^{-1}(x,y)F^S( r\frac{\p}{\p r},e_j)(x)\psi(x,y)   
=  \frac{\p}{\p r}\bigl(r\Gamma^V (e_j)\bigr)  + \Gamma^V(\Phi(e_j)).\end{equation}
This translates the equation into an ordinary differential equation on a trivial bundle on $M\times \{y\}$, and we can solve recursively by integrating. 
\begin{equation}\label{recurgamma}\Gamma^V (e_j) = \frac{1}{r}\int_0^r\Bigl(\psi^{-1}F^S(se_1,e_j)\psi - \Gamma^V( \Phi(e_j))\Bigr)ds.\end{equation}

Using the recursion relation (\ref{recurgamma}), we expand (\ref{recurgamma}) as: 
\begin{equation}\label{preokr5}
	\Gamma^V (e_j) =  \frac{1}{r}\int_0^r \psi^{-1}F^S(se_1,e_j)\psi ds
	 - \frac{1}{r}\int_0^r \langle\Phi(e_j),e_i\rangle\frac{1}{s}\int_0^s \Bigl(\psi^{-1}F^S( s_2 e_1,e_i)\psi 
	  -\Gamma^V(   \Phi(e_i))\Bigr)ds_2ds.\end{equation}	

Note that the integral $\frac{1}{r}\int_0^r \langle\Phi(e_j),e_i\rangle\frac{1}{s}\int_0^s \Gamma^V(   \Phi(e_i))ds_2ds$ is $O(kr^5)$, giving  for $X\in \ker(d\pi_2)$, 
\begin{equation}\label{or4}\begin{split}\Gamma^V(X) &=  \frac{1}{2} F^S(y)(r \frac{\p}{\p r},X)  + \frac{r}{3} (\nabla_{\frac{\p}{\p r}} F^S)(y)(r \frac{\p}{\p r},X)\\
& \quad+ \frac{r^2}{4} (\nabla_{ \frac{\p}{\p r}}^2 F^S)(y)(r \frac{\p}{\p r},X) 
-  F^S\Bigl(r\frac{\p}{\p r},\frac{r^2}{24}R(y)( \frac{\p}{\p r},X)\frac{\p}{\p r} \Bigr)+ O(kr^4).
\end{split}\end{equation}

If we now let $\{e_j\}_{j=1}^{2m}$ be the 2-lift of a local tangent frame covariant constant along radial geodesics centered at $x$, with $e_1 = \frac{\p}{\p r_2}$, we may repeat the preceding analysis exchanging $x$ and $y$ and $\frac{\p}{\p r}$ and $\frac{\p}{\p r_2}$ to get for $Y\in \ker(d\pi_1)$, 
\begin{equation}\label{or4Y}\begin{split}\psi_{;Y}(x,y)\psi^{-1}(x,y) &= -\frac{1}{2} F^S(x)(r \frac{\p}{\p r_2},Y)  - \frac{r}{3} (\nabla_{\frac{\p}{\p r_2}} F^S)(x)(r \frac{\p}{\p r_2},Y)\\
& \quad- \frac{r^2}{4} (\nabla_{ \frac{\p}{\p r_2}}^2 F^S)(x)(r \frac{\p}{\p r_2},Y) 
+  F^S\Bigl(r\frac{\p}{\p r_2},\frac{r^2}{24}R(x)( \frac{\p}{\p r_2},Y)\frac{\p}{\p r_2} \Bigr)+ O(kr^4).\end{split}\end{equation}

The preceding discussion did not employ any properties of $S$ beyond the structure of a bundle with connection. Hence we may replace $S$ with any of its factors in order to compute the derivatives of $\hat\psi$ or $\psi_L$.  In the case of $\hat\psi$, the error term improves from $O(kr^4)$ to $O(r^4)$. Letting $V(L^k)$ denote $\Hom(\pi_1^*L^k,\pi_2^*L^k)$, we note that $F^{V(L^k)} = -ik\pi_1^*\omega + ik\pi_2^*\omega$ is covariant constant. Hence  for $X\in \ker(d\pi_2)$, 
\begin{equation}\label{okr5}\Gamma^{V(L^k)} (X) = \frac{-ik}{2} g(Jr \frac{\p}{\p r},X)
-\frac{ik }{24}R(y)(X,r\frac{\p}{\p r},r\frac{\p}{\p r},Jr\frac{\p}{\p r}) -\frac{ik }{40}(\nabla_{r\frac{\p}{\p r}}R)(X,r\frac{\p}{\p r},r\frac{\p}{\p r},Jr\frac{\p}{\p r})  + O(kr^5).\end{equation} 
 \end{proof}

   
It is useful to record the following special case of (\ref{delpsi}) and (\ref{or4}):
\begin{align}
\notag
\psi_L^{-1}\delta\psi_{L;Jr\frac{\p}{\p r}}+\hat\psi^{-1}\hat\psi_{;Jr\frac{\p}{\p r}}
  =  - iz^a\bar z^b\Bigl[  &F^E(y)( \frac{\p}{\p z^a}, \frac{\p}{\p \bar z^b})+ R(y)( \frac{\p}{\p z^a}, \frac{\p}{\p \bar z^b})+ \frac{2}{3} (\nabla_{r\frac{\p}{\p r}} F^E)(\frac{\p}{\p z^a}, \frac{\p}{\p \bar z^b})\\
\label{spokr5}
&  + \frac{2}{3} (\nabla_{r\frac{\p}{\p r}} R)(\frac{\p}{\p z^a}, \frac{\p}{\p \bar z^b})+\frac{ k z^c\bar z^e }{6}R(y)(\frac{\p}{\p z^c},\frac{\p}{\p \bar z^e}, \frac{\p}{\p z^a}, \frac{\p}{\p \bar z^b})\\
\notag
&  +\frac{ kz^c\bar z^e }{10}(\nabla_{r\frac{\p}{\p r}}R)(\frac{\p}{\p z^c},\frac{\p}{\p \bar z^e}, \frac{\p}{\p z^a}, \frac{\p}{\p \bar z^b})\Bigr]  + O(kr^6+r^4).\end{align}

\begin{proposition}\label{nablapsinablapsi} 
\[\nabla^*\psi^{-1}\nabla\psi =   \frac{ik }{40}(\nabla_{e_j}R)(e_j,r\frac{\p}{\p r},r\frac{\p}{\p r},Jr\frac{\p}{\p r}) 
  - \frac{1}{3}  d_A^* F^E(r \frac{\p}{\p r})  - \frac13 d^*_{\nabla^{LC}}R(r\frac{\p}{\p r}) + O(r^2+kr^4).\]
\end{proposition}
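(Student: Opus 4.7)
The plan is to decompose $\psi = \hat\psi\otimes\psi_L$, apply the expansions (\ref{or4}) and (\ref{okr5}) to $\hat\psi^{-1}\nabla\hat\psi$ and $\psi_L^{-1}\nabla\psi_L$ respectively, and compute the divergence in the radially parallel orthonormal frame $\{e_j\}$ at $x$. Three features of this frame are used throughout: since the metric is K\"ahler, $\nabla J = 0$, so the matrix $J^i_j := g(Je_j,e_i)$ has constant entries and $\sum_j J^i_j e_j = -Je_i$; from (\ref{const2coord}), $e_i(x^j) = \delta_i^j + \tfrac{1}{6}R(y)(r\frac{\p}{\p r},e_i,e_j,r\frac{\p}{\p r}) + \tfrac{1}{12}(\nabla_{r\frac{\p}{\p r}}R)(r\frac{\p}{\p r},e_i,e_j,r\frac{\p}{\p r}) + O(r^4)$; and from (\ref{gammaij}), $\sum_j \nabla_{e_j}e_j = -\tfrac{2}{3}Ric(r\frac{\p}{\p r},\cdot)^{\sharp} + O(r^2)$, entering through the divergence formula $\nabla^*\alpha = -\sum_j \nabla_{e_j}(\alpha(e_j)) + \alpha(\sum_j \nabla_{e_j}e_j)$.

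For the $\hat\psi$-part, (\ref{or4}) with $F^S$ replaced by $F^{\hat S} := F^E+R$ (error now $O(r^4)$) gives $\hat\psi^{-1}\hat\psi_{;i} = \tfrac{1}{2}F^{\hat S}(y)(r\frac{\p}{\p r},e_i) + \tfrac{1}{3}(\nabla_{r\frac{\p}{\p r}}F^{\hat S})(y)(r\frac{\p}{\p r},e_i) + O(r^3)$. Taking the divergence, the leading piece contributes $-\tfrac{1}{2}\sum_{i,j}(e_ix^j) F^{\hat S}(y)(e_j,e_i)$, whose $\delta_i^j$-component vanishes by antisymmetry of $F^{\hat S}$ while the $O(r^2)$-frame correction is absorbed in the error. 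The second piece contributes, via the identity $\sum_i(\nabla_{e_i}F^{\hat S})(e_i,\cdot) = -d^* F^{\hat S}(\cdot)$, exactly $-\tfrac{1}{3}d^*_AF^E(r\frac{\p}{\p r}) - \tfrac{1}{3}d^*_{\nabla^{LC}}R(r\frac{\p}{\p r})$ modulo $O(r^2)$, the two non-$k$ terms of the assertion.

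For the $\psi_L$-part the three terms of (\ref{okr5}) must be handled in concert. The leading piece $-\tfrac{ik}{2}g(Jr\frac{\p}{\p r},e_i) = -\tfrac{ik}{2}\sum_j x^j J^i_j$ has divergence computed by expanding $e_i(x^j)$ to order $r^3$: the $\delta_i^j$-part vanishes because $\tr J = 0$; the $\tfrac{1}{6}R$-part yields $\tfrac{ik}{12}\sum_i R(y)(r\frac{\p}{\p r},e_i,Je_i,r\frac{\p}{\p r})$, which by the K\"ahler identity $R(\cdot,\cdot,JZ,W)=-R(\cdot,\cdot,Z,JW)$ reduces to a multiple of $Ric(r\frac{\p}{\p r},Jr\frac{\p}{\p r})=0$ (Lemma \ref{Jric}); the $\tfrac{1}{12}\nabla R$-part reduces analogously to $(\nabla_{r\frac{\p}{\p r}}Ric)(r\frac{\p}{\p r},Jr\frac{\p}{\p r})$, which vanishes upon differentiating $Ric(X,JX)\equiv 0$ along $X = r\frac{\p}{\p r}$, using $\nabla_X X = X$ and $\nabla J = 0$. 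The Christoffel correction $\Gamma_L(\sum_j\nabla_{e_j}e_j) \propto Ric(r\frac{\p}{\p r},Jr\frac{\p}{\p r})$ likewise vanishes. The $-\tfrac{ik}{24}R$-piece contributes nothing modulo $O(kr^4)$: the derivative landing on the $e_i$ slot produces an antisymmetric contraction $\sum_i R(e_i,e_i,\ldots)=0$, and landing on any of the three $r\frac{\p}{\p r}$ or $Jr\frac{\p}{\p r}$ slots produces a $Ric(\cdot,J\cdot)$-contraction killed by Lemma \ref{Jric} and K\"ahler invariance. The sole survivor is the $-\tfrac{ik}{40}\nabla_{r\frac{\p}{\p r}}R$-term: among its four candidate derivative terms, three reduce by the same symmetry pattern to $(\nabla Ric)(X,JX)$-contractions and vanish, leaving only $\tfrac{ik}{40}\sum_j(\nabla_{e_j}R)(e_j,r\frac{\p}{\p r},r\frac{\p}{\p r},Jr\frac{\p}{\p r})$, the claimed $k$-term.

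Assembling the contributions gives the identity modulo $O(r^2+kr^4)$. The main obstacle is the $\psi_L$ bookkeeping: the leading $g(Jr\frac{\p}{\p r},\cdot)$ term alone generates $O(k)$, $O(kr^2)$, and $O(kr^3)$ candidate contributions via the Taylor expansion of $e_i(x^j)$, and each of the $R$- and $\nabla R$-pieces generates further candidates under differentiation. Showing that the whole collection collapses to the single surviving $\tfrac{ik}{40}$-term requires combining $\nabla J=0$, the K\"ahler $J$-invariance of $R$ and $\nabla R$, the pair and first Bianchi symmetries for $R$, the divergence identity for $F^{\hat S}$, and the vanishing of $Ric(X,JX)$ together with its first derivative consequence via $\nabla_X X = X$.
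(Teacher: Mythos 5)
Your proposal is correct and follows essentially the same route as the paper: expand $\psi_L^{-1}\nabla\psi_L$ and $\hat\psi^{-1}\nabla\hat\psi$ via (\ref{okr5}) and (\ref{or4}), take the divergence in the radially parallel orthonormal frame, and kill all unwanted $k$-terms through the K\"ahler symmetries of $R$ and $\nabla R$ together with Lemma \ref{Jric} and its covariant-derivative consequence, leaving the $\frac{ik}{40}$ divergence-of-$R$ term and the two $-\frac13 d^*$ terms. The only difference is bookkeeping: the paper differentiates the tensor expression covariantly (so the $\nabla_{e_j}e_j$ correction cancels identically and $g(J\nabla_{e_j}r\frac{\p}{\p r},e_j)=g(J\Phi(e_j),e_j)$ appears directly), whereas you expand the scalar coefficient via $e_i(x^j)$ and treat the frame-divergence correction separately, which is equivalent.
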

\begin{proof} 
Let $\{e_j\}_j$ be an orthonormal $\nabla_{\frac{\p}{\p r}}$ constant tangent frame, with $\nabla e_j = 0$ at $r=0$. In this frame we have
\begin{align*}
	-\nabla^*\psi^{-1}\nabla\psi &= (\nabla_{e_j}(\psi^{-1}\nabla_{e_j}\psi) - \psi^{-1}\nabla_{\nabla_{e_j}e_j}\psi)\\
&=   \frac{-ik}{2} g(J\nabla_{e_j}r \frac{\p}{\p r},e_j)
-\frac{ik }{24} R(y)(e_j,r\frac{\p}{\p r},e_j,Jr\frac{\p}{\p r})
-\frac{ik }{24} R(y)(e_j,r\frac{\p}{\p r},r\frac{\p}{\p r},Je_j)\\
&\quad
  - \frac{ik }{40}(\nabla_{e_j}R)(e_j,r\frac{\p}{\p r},r\frac{\p}{\p r},Jr\frac{\p}{\p r}) 
  - \frac{ikr}{40}(\nabla_{\frac{\p}{\p r}}R)(e_j,r\frac{\p}{\p r},e_j,Jr\frac{\p}{\p r}) 
  - \frac{ikr}{40}(\nabla_{\frac{\p}{\p r}}R)(e_j,r\frac{\p}{\p r},r\frac{\p}{\p r},Je_j)  
\\
&\quad+ \frac{1}{2} F^E(y)(\nabla_{e_j}r \frac{\p}{\p r},e_j) 
 + \frac{1}{3}  (\nabla_{ e_j} F^E)(y)(r \frac{\p}{\p r},e_j) 
+ \frac{1}{2} R(y)(\nabla_{e_j}r \frac{\p}{\p r},e_j) 
 + \frac{1}{3}  (\nabla_{ e_j} R)(y)(r \frac{\p}{\p r},e_j)  
 + O(r^2+kr^4)\\
&=   \frac{-ik}{2} g(J\Phi(e_j),e_j) - \frac{ik }{40}(\nabla_{e_j}R)(e_j,r\frac{\p}{\p r},r\frac{\p}{\p r},Jr\frac{\p}{\p r}) 
    + \frac{1}{3}   (\nabla_{ e_j} F^E)(y)(r \frac{\p}{\p r},e_j)  + \frac{1}{3}   (\nabla_{ e_j} R)(y)(r \frac{\p}{\p r},e_j) + O(r^2+kr^4)\\
&=    - \frac{ik }{40}(\nabla_{e_j}R)(e_j,r\frac{\p}{\p r},r\frac{\p}{\p r},Jr\frac{\p}{\p r}) 
 + \frac{1}{3}  d_A^* F^E(r \frac{\p}{\p r})
+ \frac{1}{3}  d_{\nabla^{LC}}^* R(r \frac{\p}{\p r})  + O(r^2+kr^4).
\end{align*}
\end{proof}

\subsection{The Inductive Construction}
We now construct the Schwartz kernel, $q_t$ of our approximation $Q_t$ to $e^{-tD_{A(k)}^2}$. We construct $q_t$ explicitly only in a small neighborhood of the diagonal in $M\times M$. Because $q_t$ is rapidly decreasing away from the diagonal, we will suppress in our discussion the cutoff functions which are needed to extend $q_t$ to the complement of a neighborhood of the diagonal. Fix geodesic normal coordinates, $x$, centered at $y$, and defined in some neighborhood of $y$, $B_y$. We construct the approximation inductively.  Let $r$ denote the geodesic distance from $y$ to $x$, and let
\begin{align*}
	U &:= e^{-\frac{kr^2}{4\tanh(tk)}}\\
	\Prefix&:=(\frac{k}{4\pi\sinh(tk)})^{m}U\sum_{q}e^{kt(m-4q)}P_{2q}
\end{align*}
 we write 
\[q_t(x,y) =  \Prefix  \psi(x,y)\sum_{l=0}^N u_l(x,y)\]
with the $u_l$'s to be determined and $N$ large.

The $u_l(x,y)$'s are sections of $\End(S_y)\simeq \End(\Lambda^{0,\even}T^*_yM\otimes E_y)$ and are constructed inductively so that $Q_0 = I$ and so that 
$\epsilon_t:= (\frac{\p}{\p t}+ D_A^2)q_t$ is sufficiently small.  Write
\begin{equation}
D_A^2 = \nabla^*\nabla + \mathcal{F} + 2  e(F^{0,2})  + 2 e^*(F_A^{0,2}),	
\end{equation}
where, in a local frame $\{Z_j\}_{j=1}^m$ for $T^{1,0}M$ and coframe $\{w^j\}_{j=1}^m$, 
\[\mathcal{F} := -  [e(w^{\bar j}),e^*(w^{\bar l})] F_{\bar j l} =   kg_{\bar j l}[e(w^{\bar j}),e^*(w^{\bar l})]   -  [e(w^{\bar j}),e^*(w^{\bar l})] \hat F_{\bar j l}.\]

On $(0,q)$ forms this becomes 
\[\mathcal{F} =  k(2q-m) +  \mathcal{\hat F},\]
where 
\[\mathcal{\hat F}:=-  [e(w^{\bar j}),e^*(w^{\bar l})] \hat F_{\bar j l}.\]
 
We now compute 
\begin{align*}
\epsilon_t &=(\frac{\p}{\p t}+D_A^2)q_t\\
	 &= \Bigl(\frac{\p}{\p t}+ \nabla^*\nabla + \mathcal{F} + 2  e(F_A^{0,2})  + 2 e^*(F_A^{0,2})  \Bigr) \Bigl( \Prefix  \psi(x,y)\sum_{l=0}^N u_l(x,y)\Bigr).
\end{align*}

It is convenient to conjugate by $\Prefix\psi$.
Note that because $e(F^{0,2})$ raises degree,  $e(F^{0,2})\Prefix  \psi=\Prefix  \psi\bigl( e^{4kt}\psi^{-1}e(F^{0,2})\psi\bigr)$, and $e^*(F^{0,2})\Prefix\psi=\Prefix \psi\bigl( e^{-4kt}\psi^{-1}e^*(F^{0,2})\psi\bigr)$. Conjugating and recalling that in local coordinates  
\[\nabla^*\nabla = -g^{ij}\nabla_i\nabla_j + g^{ij}\Gamma_{ij}^l\nabla_l,\] 
we recast $\epsilon_t$ as
\begin{align*}
\epsilon_t = \psi(x,y) \Prefix  \Biggl(&\frac{\p}{\p t}  +\nabla^*\nabla
+\frac{kr}{ \tanh(tk)}\nabla_{\frac{\p}{\p r}}-\frac{k^2r^2 }{4}-  \frac{k(4m+\Delta (r^2))}{4\tanh(tk)} \\ &
-g^{ij}\Bigl( 2\psi^{-1}\psi_{;i}\nabla_j+ (\psi^{-1}\psi_{;j})_{_;i} +\psi^{-1}\psi_{;i}\psi^{-1}\psi_{;j}\Bigr) + g^{ij}\Gamma_{ij}^l\psi^{-1}\psi_{;l}  \\
 &   +\psi^{-1}\mathcal{\hat F}\psi+ 2e^{4kt}\psi^{-1}e(F_A^{0,2})\psi + 2e^{-4kt}\psi^{-1}e^*(F_A^{0,2})\psi\Biggr) \sum_{l=0}^N  u_l(x,y).
\end{align*}

We use the triviality of the bundle $M\times \End(S_y)$ to replace covariant derivatives by partial derivatives. 
Expanding $\psi^{-1}\psi_{;l}$ as $\bigl(-\frac{ik}{2} g(r J\frac{\p}{\p r},\frac{\p}{\p x^l}) + \psi_L^{-1}\delta\psi_{L;l} + \hat \psi^{-1}\hat \psi_{;l}\bigr)$ as per Equation (\ref{psiLi}) then gives 
\begin{align*}
\epsilon_t =\psi(x,y) \Prefix  &\Biggl(\frac{\p}{\p t}  +\Delta
+  ikr J\frac{\p}{\p r}  +\frac{kr}{ \tanh(tk)} \frac{\p}{\p r} 
  -  \frac{k(4m+\Delta (r^2))}{4\tanh(tk)}  \\
&- 2g^{ij}( \psi_L^{-1}\delta\psi_{L;i} + \hat \psi^{-1}\hat \psi_{;i})\frac{\p}{\p x^j}+ \nabla^*\psi^{-1}\nabla\psi\\
&-g^{ij}(\psi_L^{-1}\delta\psi_{L;i} + \hat \psi^{-1}\hat \psi_{;i})(\psi_L^{-1}\delta\psi_{L;j} + \hat \psi^{-1}\hat \psi_{;j})\\
&+ik\bigl(\psi_L^{-1}\delta\psi_{L;r J\frac{\p}{\p r}} + \hat \psi^{-1}\hat \psi_{;r J\frac{\p}{\p r}}\bigr)\\
&+\psi^{-1}\mathcal{\hat F}\psi + 2e^{4kt}\psi^{-1}e(F_A^{0,2})\psi + 2e^{-4kt}\psi^{-1}e^*(F_A^{0,2})\psi\Biggr) \sum_{l=0}^N  u_l(x,y).
\end{align*}

We now make several strongly coordinate dependent choices in our analysis of $\epsilon_t$.  
Set $\Delta_E = -\sum_{j=1}^{2m}\frac{\p^2}{(\p x^j)^2}$. With this notation, we define 
\[L:= \p_t  + \frac{kr}{\tanh(tk)} \frac{\p}{\p r} + ik  r J_0\frac{\p}{\p r} +\Delta_E.\]
This operator will dominate our analysis of $\epsilon_t.$
Define $H$ by 
\[\epsilon_t =\psi  \Prefix    (L + H ) \sum_{l=0}^Nu_l   .\]
We begin the inductive construction by setting $u_0 = I$. Observe that $L$ annihilates $u_0$. In the next section, we define an operator $L^{-1}$ that approximately inverts $L$. We then define $u_l$, $l>0$, inductively by setting
\begin{equation}\label{induct}u_{l+1} = -L^{-1}Hu_l.\end{equation}
We now introduce a filtration on operators which  greatly simplifies our later analysis of the magnitude of the Schwartz kernels produced by this algorithm.  
Define the filtration $W_y^{\cdot}$ on partial differential operators defined in a neighborhood $B_y$ of $y$ as follows. We say that an operator $Z\in W_y^b$ if in geodesic normal coordinates centered at $y$, $Z$ can be expressed as a finite sum
\begin{equation}
Z=\sum_{2p+|I|-|J|\leq b}k^p(x-y)^J\sum_{d\geq 0}e^{4dtk}a_{I,J,p,d}(x,y,tk)\frac{\p}{\p x^I},	
\end{equation}
with $a_{I,J,p,d} = \sum_{j\geq 0}P_{2j+2d}a_{I,J,p,d}$ when $d>0$, and $a_{I,J,p,d}(x,y,tk)$ smooth, and bounded for $t<1$. 
In particular, differentiation by a coordinate vector field has weight $+1$, multiplication by $(x^j-y^j)$ has weight $-1$, multiplication by $k$ has weight $2$, multiplication by $t$ has weight $-2$, multiplication by $e^{4kt}\psi^{-1}e(F_A^{0,2})\psi$ has weight $0$, and $W^i\circ W^j\subset W^{i+j}$.  
Set 
\[H_h:= H -\bigl(2  e^{4kt} \psi^{-1}e(F^{0,2})\psi  + 2  e^{-4kt} \psi^{-1}e^*(F_A^{0,2}) \psi\bigr) . \]
\begin{proposition}\label{hweight}
\[H\in W^0_y.\]
\end{proposition}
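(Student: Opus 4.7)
The plan is direct bookkeeping. The operator $H$ is implicitly defined by matching the bracketed operator in the final displayed expression for $\epsilon_t$ against $L$, and I would show each resulting summand lies in $W^0_y$.

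First I would split $H = H_h + 2e^{4kt}\psi^{-1}e(F_A^{0,2})\psi + 2e^{-4kt}\psi^{-1}e^*(F_A^{0,2})\psi$. The term $e^{4kt}\psi^{-1}e(F_A^{0,2})\psi$ is of weight $0$ by the definition of $W^0_y$, taking $d=1$ and $p=|I|=|J|=0$, the projection condition $a=\sum_{j\geq 0}P_{2j+2}a$ being automatic since $e(F^{0,2})$ outputs into degree $\geq 2$. The term $e^{-4kt}\psi^{-1}e^*(F_A^{0,2})\psi$ is of weight $0$ via $d=0$, as $e^{-4kt}$ is bounded in $tk\geq 0$ and $e^*(F^{0,2})$ is zeroth order.

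Second I would enumerate the summands of $H_h$: $\Delta-\Delta_E$; $ikr(J-J_0)\partial/\partial r$; $-k(4m+\Delta r^2)/(4\tanh(tk))$; the first-order cross term $-2g^{ij}(\psi_L^{-1}\delta\psi_{L;i}+\hat\psi^{-1}\hat\psi_{;i})\partial/\partial x^j$; $\nabla^*\psi^{-1}\nabla\psi$; the zeroth-order quadratic $-g^{ij}(\psi_L^{-1}\delta\psi_{L;i}+\hat\psi^{-1}\hat\psi_{;i})(\psi_L^{-1}\delta\psi_{L;j}+\hat\psi^{-1}\hat\psi_{;j})$; the zeroth-order $ik(\psi_L^{-1}\delta\psi_{L;rJ\partial/\partial r}+\hat\psi^{-1}\hat\psi_{;rJ\partial/\partial r})$; and the bounded $\psi^{-1}\hat{\mathcal F}\psi$. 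For each I would invoke a bound already on record: (\ref{gij}) and (\ref{gammaij}) give $g^{ij}-\delta^{ij}=O(r^2)$ and $\Gamma^l_{ij}=O(r)$, so $\Delta-\Delta_E$ has weight $\leq 0$; Lemma \ref{JJ0} gives $J-J_0=O(r^2)$, so $ikr(J-J_0)\partial/\partial r$ has weight $2-3+1=0$ (using $r\partial/\partial r=x^j\partial/\partial x^j$); (\ref{dr2}) gives $4m+\Delta r^2=O(r^2)$, yielding weight $2-2=0$ with $\coth(tk)$ absorbed into the admissible coefficient; (\ref{delpsi}) and (\ref{nablapsihat}) yield $\psi_L^{-1}\delta\psi_{L;i}=O(kr^3)$ and $\hat\psi^{-1}\hat\psi_{;i}=O(r)$, making the first-order cross term of weight $\leq 0$ and the quadratic of weight $\leq -2$; Proposition \ref{nablapsinablapsi} shows $\nabla^*\psi^{-1}\nabla\psi$ is zeroth order of size $O(r+kr^3+r^2+kr^4)$, weights $\leq -1$; (\ref{spokr5}) gives the $ik(\cdots)$ term as $O(kr^2+k^2r^4)$, weights $0$ and $0$; and $\psi^{-1}\hat{\mathcal F}\psi$ is pointwise bounded, weight $0$. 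None of these summands carries an $e^{4dtk}$ factor with $d\geq 1$, so they all populate the $d=0$ slot of the filtration, where no spectral projection condition is imposed.

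The main difficulty is not conceptual but bookkeeping: one must match operators consistently across the conjugation by $\psi\Prefix$ and verify that each curvature Taylor expansion provides enough powers of $r$ to compensate for both the derivative order and twice the power of $k$. The one subtle point is the $t$-dependence through $\coth(tk)$ and the $1/\sinh(tk)$ prefactor of $\Prefix$: these are not uniformly bounded on $t, k>0$, but they are admissible coefficients $a_{I,J,p,d}(x,y,tk)$ in the regime $tk\geq c>0$ where the approximation is used, which is precisely why the filtration is phrased in terms of functions of $tk$.
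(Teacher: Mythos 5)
Your term-by-term verification is exactly the argument the paper intends: its proof of Proposition \ref{hweight} is a one-line appeal to Lemma \ref{JJ0} and Equations (\ref{gij}), (\ref{gammaij}), (\ref{dr2}), (\ref{delpsi}), (\ref{nablapsihat}), (\ref{delpsihat}), and your weight counts for each summand of $H_h$ and for the $e^{\pm 4kt}\psi^{-1}e(F_A^{0,2})\psi$, $e^{\pm 4kt}\psi^{-1}e^*(F_A^{0,2})\psi$ terms match that bookkeeping. Your closing caveat about $\coth(tk)$ and the $1/\sinh(tk)$ prefactor reflects an imprecision in the paper's own formulation of the filtration rather than a defect of your argument, so the proposal stands as essentially the same proof, written out in detail.
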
 
\begin{proof}This claim is an immediate  consequence of Lemma \ref{JJ0} and Equations  (\ref{gij}), (\ref{gammaij}),  (\ref{dr2}), (\ref{delpsi}), (\ref{nablapsihat}), and (\ref{delpsihat}).
\end{proof}
 
\subsection{\texorpdfstring{$L^{-1}$}
{L inverse}}
In this subsection, we construct an approximate inverse to $L$. 
First compute, for $J,K$ multi-indices, 
\begin{align}\label{Lfirst}
	 L\bigl( a_{JK}(tk)z^J\bar z^K\bigr)
	&=\bigl(\p_t+\Delta_E+(|J|+|K|)\frac{k}{\tanh(tk)}+k(|K|-|J|)\bigr)a_{JK}(tk)z^J\bar z^K\\
	&=\mu_{JK}^{-1}(tk)\bigl(\p_t+\Delta_E)\Bigl(\mu_{JK}(tk) a_{JK}(tk)z^J\bar z^K\Bigr),\notag
\end{align}
where
\begin{align*}
	\mu_{JK}(tk)
	&=\sinh(tk)^{(|J|+|K|)}e^{(|K|-|J|)tk}.
\end{align*}
On sections which are polynomial in $z$ and $\bar z$, the  inverse operator is
\begin{align*}
L^{-1}\Bigl(z^J\bar{z}^Ka_{JK}(tk)\Bigr)
&=\int_0^t\int_{\IR^{2m}}\frac{e^{-\frac{|z-y|^2}{4(t-s)}}}{\bigl(4\pi(t-s)\bigr)^m}\frac{\mu_{JK}(sk)}{\mu_{JK}(tk)} y^{J}\bar{y}^K a_{JK}(sk) dyds\\
&=\frac1k\int_0^{tk}\int_{\IR^{2m}}e^{-\pi|y|^2}\frac{\mu_{JK}(s)}{\mu_{JK}(tk)}\Bigl(\sqrt{\frac{4\pi}{k}(tk-s)}y+z\Bigr)^J \Bigl(\sqrt{ \frac{4\pi}{k} (tk-s)}\bar y+ \bar z\Bigr)^Ka_{JK}(s)dyds
\end{align*}

Observe that $L^{-1}$ lowers weight by $2$ and increases by $1$ the order of vanishing of $a_{JK}(tk)$  at $0$.   
\begin{proposition} 
 $u_l(x,y)\in W_y^{-2l}.$ 
\end{proposition}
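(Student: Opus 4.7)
The plan is to prove this by induction on $l$. The base case $l=0$ is immediate since $u_0 = I$ is constant in $x$ and thus lies in $W_y^0$ (set $p = 0$, $|I| = |J| = 0$, $d = 0$ in the filtration definition). For the inductive step, assuming $u_l \in W_y^{-2l}$, I would decompose the verification of $u_{l+1} = -L^{-1}H u_l \in W_y^{-2l-2}$ into two subclaims: (i) $H u_l \in W_y^{-2l}$, and (ii) $L^{-1}$ maps $W_y^{-2l}$ into $W_y^{-2l-2}$.

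For (i), I would combine Proposition \ref{hweight} (which asserts $H \in W_y^0$) with the expected multiplicativity $W_y^i \circ W_y^j \subset W_y^{i+j}$ of the filtration. Concretely, a summand $k^p(x-y)^J e^{4dtk} a_{I,J,p,d}(x,y,tk)\p^I$ of $H$ (satisfying $2p + |I| - |J| \leq 0$) applied to a summand $k^{p'}(x-y)^{J'} e^{4d'tk} b_{J',p',d'}(x,y,tk)$ of $u_l$ (satisfying $2p'-|J'| \leq -2l$) yields, via Leibniz distribution of $\p^I$, terms whose combined weight is at most $(2p+|I|-|J|) + (2p'-|J'|) \leq -2l$.

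For (ii), the key step is verifying that $L^{-1}$ lowers weight by $2$ on the monomial generators $z^J \bar z^K a_{JK}(tk)$, which have weight $-|J|-|K|$. Using the explicit formula above, the prefactor $\frac{1}{k}$ contributes weight $-2$; after expanding the binomials and performing the Gaussian $y$-integration, only terms with matched indices survive, producing factors $((tk-s)/k)^{(|J-J'|+|K-K'|)/2}$ together with monomials $z^{J'}\bar z^{K'}$. The $1/k^p$ portion of these factors contributes weight $-(|J-J'|+|K-K'|)$; the monomial $z^{J'}\bar z^{K'}$ contributes weight $-|J'|-|K'|$; and the $s$-integral over $[0,tk]$, the bounded ratio $\mu_{JK}(s)/\mu_{JK}(tk)$, and any $e^{4dtk}$ factors all carry weight zero. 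Summing yields total weight $-|J|-|K|-2$, as required. For general $W_y^{-2l}$ sections, a finite-order Taylor expansion of the coefficients in $x$ about $y$ reduces to the monomial case, with the Taylor remainder belonging to a sufficiently negative weight class.

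The principal obstacle in this plan is the bookkeeping of the $e^{4dtk}$ factors and form-degree projections $P_{2j+2d}$ in the filtration definition: the terms $e^{\pm 4kt}\psi^{-1} e(F_A^{0,2})\psi$ in $H$ shift the difference $|K|-|J|$ by $\pm 2$, so one must verify that the $e^{(|K|-|J|)tk}$ factor built into $\mu_{JK}$ accommodates this shift — keeping the Gaussian formula for $L^{-1}$ applicable and the ratio $\mu_{JK}(s)/\mu_{JK}(tk)$ uniformly bounded on $s\in[0,tk]$. Provided this matching is verified, the induction closes.
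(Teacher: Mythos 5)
Your proposal is correct and follows essentially the same route as the paper: the paper's proof is precisely the induction via $u_{l+1}=-L^{-1}Hu_l$, citing Proposition \ref{hweight} ($H\in W_y^0$), the multiplicativity $W_y^i\circ W_y^j\subset W_y^{i+j}$, and the observation (stated just before the proposition) that $L^{-1}$ lowers weight by $2$. You simply spell out the weight count for $L^{-1}$ and the $e^{4dtk}$ bookkeeping that the paper leaves as an assertion, which is consistent with its conventions.
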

\begin{proof} 
This proposition follows immediately from Equation (\ref{induct}) and Proposition \ref{hweight}. 
\end{proof} 

We extend $L^{-1}$ from polynomials to smooth functions $A(z,\bar z,tk)$ by setting
\[L^{-1}A := L^{-1}p_{2N},\]
where $p_{2N}$ is the degree $2N$ Taylor polynomial for $A$.  Then 
\begin{equation}LL^{-1}-I\in W_y^{-2N}.\end{equation} 
 
\begin{proposition}\label{propasy}  
For some constants $C_l,c_l, B_l$ depending on the geometry of $M$, 
\[\int_{M\times M}|P_{2q}u_l(x,y)P_{2b}|^2\frac{k^{2m}e^{-\frac{kr^2}{2\tanh(tk)}}e^{2kt(m-4q)}}{\bigl(4\pi\sinh(tk)\bigr)^{2m} }dxdy
\leq
\begin{cases}C_{l }k^{m-2l} ,&   \text{ for } l\geq b+q,\,\, t\geq k^{-1}\\
C_{l }k^{m-2l}  e^{-8kt},&   \text{ for } l<b+q, \,\,t\geq k^{-1}\\
	c_lt^{2l-m},   &\text{ for } l\geq |q-b|, \,\, t< k^{-1},
\end{cases}\]
and
\[ P_{2q}u_l(x,y)P_{2b}=0, \quad \forall l< |q-b|.\]
Moreover
\[\int_M\tr\, P_{2q}u_l(x,x)(\frac{k}{4\pi\sinh(tk)})^{m}e^{kt(m-4q)}dx\leq
\begin{cases}  B_l k^{m-2q } (\frac{e^{kt}}{\sinh(tk)})^{m },&\text{ for } tk>1,\\
	B_l t^{l-m},&\text{ for } tk\leq 1.\end{cases}\] 
\end{proposition}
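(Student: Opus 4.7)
The strategy is an inductive analysis built on the weight filtration $u_l \in W_y^{-2l}$ from the previous proposition, combined with a rescaled Gaussian estimate. Writing $H = H_h + H_e + H_{e^*}$ with $H_e := 2e^{4kt}\psi^{-1}e(F_A^{0,2})\psi$ raising the codomain form degree by $2$ and $H_{e^*} := 2e^{-4kt}\psi^{-1}e^*(F_A^{0,2})\psi$ lowering it by $2$, and noting that $L^{-1}$ preserves form degree, each application of $L^{-1}H$ to $u_l$ changes the codomain degree by at most $2$. Since $u_0 = I$ is diagonal in the bidegree decomposition, the vanishing claim $P_{2q}u_lP_{2b} = 0$ for $l < |q-b|$ follows immediately. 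Moreover, any nonvanishing contribution to $P_{2q}u_lP_{2b}$ arises from a sequence with $\alpha$ copies of $H_e$ and $\beta$ copies of $H_{e^*}$ satisfying $\alpha - \beta = q-b$, and contributes a net exponential factor $e^{4kt(q-b)}$ after the intervening $L^{-1}$'s (verified by the calculation $L^{-1}(e^{4d\,tk}) \sim e^{4d\,tk}/(4dk)$ for $tk\to\infty$, which is consistent with the weight lowering).

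For the main bound, I would expand $u_l$ using the weight filtration as a sum of terms $k^p(x-y)^J e^{4dtk}a_{J,p,d}(x,y,tk)$ with $2p-|J|\leq -2l$ and $a$ bounded for $t<1$. Rescaling $x = y + \sqrt{2\tanh(tk)/k}\,u$ turns $|x-y|^{2|J|}$ into $(2\tanh(tk)/k)^{|J|}|u|^{2|J|}$, the Gaussian $e^{-kr^2/(2\tanh(tk))}$ into $e^{-|u|^2}$, and the measure $dx$ into $(2\tanh(tk)/k)^m du$. After performing the bounded Gaussian integral in $u$ and integrating $y$ over $M$, each term contributes at most
\begin{equation*}
k^{m+2p-|J|}\,\frac{\tanh(tk)^{|J|+m}}{\sinh(tk)^{2m}}\,e^{2kt(m-4b)},
\end{equation*}
where I have combined $e^{8kt(q-b)}\cdot e^{2kt(m-4q)} = e^{2kt(m-4b)}$.

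The three cases then follow by direct estimation. For $tk\geq 1$, $\tanh(tk)/\sinh(tk) = 1/\cosh(tk)\lesssim e^{-tk}$, so $\tanh(tk)^{|J|+m}/\sinh(tk)^{2m}\lesssim e^{-2mtk}$ and $e^{-2mtk}\cdot e^{2kt(m-4b)} = e^{-8ktb}$. Combined with $k^{m+2p-|J|}\leq k^{m-2l}$, the bound becomes $C_l k^{m-2l}e^{-8ktb}$: if $b=0$ this is $C_l k^{m-2l}$, giving case 1; if $l<b+q$, then $l\geq |q-b|$ forces $b\geq 1$, hence $e^{-8ktb}\leq e^{-8kt}$, giving case 2. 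For $tk<1$, $\tanh(tk)\asymp\sinh(tk)\asymp tk$ and all exponential factors are $O(1)$, so the integrand reduces to $k^{2p}t^{|J|-m}$, which for $t<1$ is maximized at $p=0$, $|J|=2l$, giving $c_l t^{2l-m}$ and hence case 3. For the trace estimate, the restriction $x=y$ forces $|J|=0$ in every surviving term, so the weight constraint forces at most a $k^{-l}$ factor; combining with the prefactor $(k/(4\pi\sinh(tk)))^m e^{kt(m-4q)}$ and the diagonal exponential structure (where $d\leq q$) produces the two stated bounds for $tk>1$ and $tk\leq 1$.

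The main technical obstacle will be the rigorous bookkeeping of the $e^{\pm 4kt}$ factors across the iterations. One must show that the various $e^{\pm 4kt}$ contributions emerging at different stages pass through the intervening $L^{-1}$'s with only tame multiplicative corrections of order $1/k$, already absorbed into the weight lowering, so that the net exponential is precisely $e^{4kt(q-b)}$ and not anything larger. This requires unpacking the $\mu_{JK}(sk)/\mu_{JK}(tk)$ ratios appearing in the explicit $L^{-1}$ formula and carefully tracking how the $e^{4dtk}$ factors in the $W^b$-expansion interact with the time integration.
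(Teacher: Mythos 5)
Your overall architecture (the $W_y^{-2l}$ expansion of $P_{2q}u_lP_{2b}$, Gaussian rescaling, case split in $tk$, and the degree-counting proof of the vanishing statement) is the paper's, but two steps rest on claims that are actually false, and they are exactly the points the paper's proof is careful about. First, the assertion that every nonvanishing contribution to $P_{2q}u_lP_{2b}$ carries the net exponential $e^{4kt(q-b)}$ is wrong: $L^{-1}$ preserves positive exponentials, $L^{-1}2e^{4pkt}a(y)\sim e^{4pkt}/(ck)$ for $p>0$, but it \emph{destroys} negative ones, since $L^{-1}2e^{-4kt}a(y)=\frac{1-e^{-4kt}}{2k}a(y)$ is $O(1/k)$ with no surviving decay --- this is precisely the paper's remark that $(L^{-1}2e^{-4kt}e^*(F_A^{0,2}))^jI$ is not exponentially decreasing. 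Concretely, take $q=b=1$, $l=2$: the $u_1$ term proportional to $\frac{1-e^{-4kt}}{k}e^*(F_A^{0,2}(y))$ feeds into $u_2$ a term proportional to $\frac{e^{4kt}}{k^2}\,e(F_A^{0,2}(y))e^*(F_A^{0,2}(y))$, so $d=1>q-b=0$, and its contribution to the squared integral is of order $k^{m-4}$ with no decay, contradicting your claimed uniform bound $C_lk^{m-2l}e^{-8ktb}$. Your case 1 happens to survive because your (false) bound is stronger than what is asserted, but case 2 genuinely needs the correct accounting: the maximal exponent is $d=q$ (the filtration forces $d\le q$ on the $P_{2q}$ block), and $d=q$ requires at least $q$ occurrences of $L^{-1}2e^{4kt}e(F_A^{0,2})$, hence at least $b$ occurrences of the lowering term to go from degree $2b$ to $2q$, hence $l\ge b+q$; so for $l<b+q$ one has $d\le q-1$ and $e^{8kt(d-q)}\le e^{-8kt}$. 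This counting argument is what the paper uses and is absent from your proposal; the same issue is the ``added complication'' in the trace bound, where the $k^{m-2q}$ (rather than $k^{m-l}$) for $tk>1$ comes from the fact that cancelling $e^{-4qkt}$ requires $q$ raisings and $q$ lowerings, i.e.\ $l\ge 2q$.

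Second, the $t<k^{-1}$ case is not settled by saying $k^{2p}t^{|J|-m}$ is ``maximized at $p=0$, $|J|=2l$'': subject only to $2p-|J|\le-2l$, terms with $p<0$ and $|J|=0$ (such as the $u_1$ term above, with $p=-1$, $|J|=0$) give $k^{-2l}t^{-m}=(tk)^{-2l}t^{2l-m}$, which \emph{exceeds} $t^{2l-m}$ precisely when $tk<1$. The missing ingredient is the paper's observation that each application of $L^{-1}$ increases by one the order of vanishing of the coefficients at $tk=0$, so that $a_{l,J,p,d}(x,y,tk)=(tk)^l\hat a_{l,J,p,d}(x,y,tk)$; the resulting $(tk)^{2l}$ factor is what yields $c_lt^{2l-m}$, and the analogous $(tk)^{l}$ factor is needed for the $B_lt^{l-m}$ trace bound when $tk\le1$. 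With these two repairs --- the raising/lowering count bounding $d$, and the $(tk)^l$ vanishing of the coefficients --- your argument becomes the paper's proof.
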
  
\begin{proof}   
Since $u_l$ is a zero order partial differential operator, $u_l(x,y)\in W_y^{-2l}$ implies that 
\[
P_{2q}u_lP_{2b}= \displaystyle\sum_{2p-|J|\leq -2l} \sum_{d\leq q}k^p(x-y)^Je^{4dtk}a_{l,J,p, d}(x,y,tk), \]
with $a_{l,J,p,d}$ smooth and bounded.

When $tk>1$,  there exist constants $c_{l,J,p,d}, C_{l,d}$ so that   we  estimate  
\begin{align*}
\int_{M\times M}|P_{2q}u_l(x,y)P_{2b}|^2&(\frac{k}{4\pi\sinh(tk)})^{2m }e^{-\frac{kr^2}{2\tanh(tk)}}e^{2kt(m-4q)}dxdy\\
&\leq 
\int_{M\times M}\sum_{2p-|J|\leq -2l}\sum_{d\leq q} c_{l,J,p,d}k^{2p}r^{2|J|}(\frac{k}{4\pi\sinh(tk)})^{2m }e^{-\frac{kr^2}{2\tanh(tk)}}e^{2kt(m-4q + 4d)}dxdy\\
&\leq    \sum_{d\leq q}C_{l,d }k^{m-2l}  e^{8kt(- q +  d)}.
\end{align*}
The only way for $u_l$ to acquire a factor of $e^{4qtk}$ is for  $L^{-1}2e^{4tk}e(F_A^{0,2})$ to occur at least $q$ times in its construction. (We note that $(L^{-1}2e^{-4tk}e^*(F_A^{0,2}))^jI$ is \emph{not} exponentially decreasing in general.) This raises degree by $2q$. If $b>0$, then  $L^{-1}2e^{-4tk}e^*(F_A^{0,2})$ must also occur at least $b$ times. This requires $l\geq b+q$.  Hence, when $l<b+q$,  $e^{8kt(-q+d)}\leq e^{-8kt}$.

When $tk<1$, we use the fact that $L^{-1}$ increases the order of vanishing in $tk$ by $1$ to write 
\[a_{l,J,p,d}(x,y,tk) = (tk)^l\hat a_{l,J,p,d}(x,y,tk),\]
with $\hat a_{l,J,p,d}$ smooth.  
Hence, there exist constants $\hat c_{l,J,p},\hat c_l,c_l$ so that  
\begin{align*}
	\int_{M\times M}|P_{2q}u_l(x,y)P_{2b}|^2&(\frac{k}{4\pi\sinh(tk)})^{2m }e^{-\frac{kr^2}{2\tanh(tk)}}e^{2kt(m-4q)}dxdy\\
&\leq 
\int_{M\times M}\sum_{2p-|J|\leq -2l}(tk)^{2l}c_{l,J,p}k^{2p}r^{2|J|}(\frac{k}{4\pi\sinh(tk)})^{2m }e^{-\frac{kr^2}{2\tanh(tk)}}dxdy\\
&\leq    \hat c_{l }t^{2l-m} \frac{ (tk)^{m}}{\sinh(tk)^{m}} \leq    c_{l }t^{2l-m}. 
\end{align*}
The vanishing of $ P_{2q}u_l(x,y)P_{2b}$ for $l< |q-b|$ follows from the observation that the only terms that raise or lower degree in our construction are $2e^{4kt}\psi^{-1}e(F_A^{0,2})\psi$ and $2e^{-4kt}\psi^{-1}e^*(F_A^{0,2})\psi$. To raise or lower degree  by $2q-2b$ requires at least $|q-b|$ applications of $2e^{4kt}\psi^{-1}e(F_A^{0,2})\psi$ or $2e^{-4kt}\psi^{-1}e^*(F_A^{0,2})\psi$ and therefore $|q-b|$ applications of $H$. That many applications of $H$ do not occur in the construction of $u_l$ until $l\geq |q-b|$.

The trace estimate is similar to the Hilbert--Schmidt estimate, with one added complication ---  the only terms in $P_{2q}u_lP_{2q}$  large enough to cancel the $e^{-4tkq}$ in the integrand are those with $L^{-1}2e^{4kt}\psi^{-1}e(F_A^{0,2})\psi$ entering $q$ times. These must also then have $L^{-1}2e^{4kt}\psi^{-1}e^*(F_A^{0,2})\psi$ entering $q$ times to map $(0,2q)$ forms back to $(0,2q)$ forms. Hence, when $tk$ is large, the trace is exponentially decreasing in $tk$ unless $l\geq 2q$. 
\end{proof}
Specializing to $t = sk^{-1/2}$, for say $s\in [\frac{1}{4},1],$ (this range merely needs to be $k$-independent) gives
\begin{corollary}\label{hsq}
For some $\alpha >0$, and for all $s\in [\frac{1}{4},1],$
\[\|Q_{sk^{-1/2}}\|_{HS}^2\leq   \alpha^2k^{m}.\]
\end{corollary}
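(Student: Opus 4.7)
The plan is to expand $\|Q_{sk^{-1/2}}\|_{HS}^2$ into a finite sum indexed by the triple $(l,q,b)$ appearing in the decomposition of the approximate kernel, and to bound each summand directly by Proposition~\ref{propasy}. Starting from
\begin{equation*}
\|Q_t\|_{HS}^2 = \int_{M\times M} \tr\bigl(q_t^*(y,x)\,q_t(y,x)\bigr)\, dy\, dx
\end{equation*}
and decomposing the fiberwise Frobenius norm via the form-degree projections $P_{2q}$ at $x$ and $P_{2b}$ at $y$, I would write
\begin{equation*}
\|Q_t\|_{HS}^2 = \sum_{q,b}\int_{M\times M} |P_{2q}\,q_t(x,y)\,P_{2b}|^2\, dx\, dy.
\end{equation*}
Since $\psi(x,y)$ is unitary and preserves form degree, and $\Prefix$ is diagonal in the $P_{2q}$-decomposition,
\begin{equation*}
P_{2q}\,q_t(x,y)\,P_{2b} = \Bigl(\tfrac{k}{4\pi\sinh(tk)}\Bigr)^{m} e^{-\frac{kr^2}{4\tanh(tk)}}\, e^{kt(m-4q)}\,\psi(x,y)\sum_{l=0}^N P_{2q}\,u_l(x,y)\,P_{2b}.
\end{equation*}

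Next I would apply Cauchy--Schwarz in $l$ to obtain
\begin{equation*}
\|Q_t\|_{HS}^2 \le (N+1)\sum_{l=0}^N\sum_{q,b}\int_{M\times M} \Bigl(\tfrac{k}{4\pi\sinh(tk)}\Bigr)^{2m} e^{-\frac{kr^2}{2\tanh(tk)}}\, e^{2kt(m-4q)}\,|P_{2q}\,u_l(x,y)\,P_{2b}|^2\, dx\, dy,
\end{equation*}
which is precisely the integral controlled by Proposition~\ref{propasy}. For $t = sk^{-1/2}$ with $s\in[\tfrac14,1]$ and $k$ large enough that $sk^{1/2}\ge 1$, we are in the first regime of the proposition, so each $(l,q,b)$ summand is bounded by $C_l k^{m-2l}$ when $l\ge q+b$ and by $C_l k^{m-2l}e^{-8sk^{1/2}}$ when $l<q+b$. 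The latter is super-polynomially small in $k$ and hence negligible. The sums over $q$ and $b$ are finite (at most $(\lfloor m/2\rfloor+1)^2$ terms), and the sum over $l$ runs from $0$ to the fixed approximation order $N$; the $l\ge 1$ terms contribute at most $O(k^{m-2})$. The $l=0$ contribution is handled via $u_0=I$: this forces $P_{2q}u_0P_{2b}=\delta_{q,b}P_{2q}$, and the surviving constraint $0\ge q+b$ then restricts to $q=b=0$, yielding a single leading term of size $O(k^m)$.

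Summing these estimates gives $\|Q_{sk^{-1/2}}\|_{HS}^2 \le \alpha^2 k^m$ for some $\alpha$ independent of large $k$ and of $s\in[\tfrac14,1]$. The only mildly subtle point is to verify that the degree-dependent weight $e^{kt(m-4q)}$ built into $\Prefix$ aligns with the outgoing block label $2q$ in $P_{2q}q_tP_{2b}$; this is immediate from the commutation $P_{2q}^{(x)}\psi = \psi P_{2q}^{(y)}$. Everything else is a direct termwise application of Proposition~\ref{propasy}, with no further estimates needed.
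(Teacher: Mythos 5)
Your argument is correct and is essentially the paper's own: the corollary is obtained exactly by specializing Proposition \ref{propasy} to $t=sk^{-1/2}$ (so that $tk=sk^{1/2}\ge 1$ for large $k$) and summing the finitely many $(l,q,b)$ blocks, with the $l=0$, $q=b=0$ block giving the $O(k^m)$ leading term and all other blocks contributing $O(k^{m-2})$ or exponentially small amounts. Your explicit degree decomposition, the unitarity of $\psi$ (degree-preserving since $M$ is K\"ahler), and the Cauchy--Schwarz step in $l$ are just the bookkeeping the paper leaves implicit.
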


\begin{proposition}\label{hserror}For some $c>0$, depending on the geometry of $M$, we have
\begin{equation}\label{eqn-hserror}
	\|\epsilon_t\|_{HS}^2 \leq c(t^{2N-m}+k^{m-2N}).\end{equation}
\end{proposition}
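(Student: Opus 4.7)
The plan is to exploit the telescoping built into the inductive construction so that only the terminal term $Hu_N$ and the Taylor-truncation errors from $(LL^{-1}-I)$ survive, then bound each of those contributions using the weight filtration together with the kernel estimates of Proposition \ref{propasy}.

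First, I would expand $\epsilon_t=\psi\Prefix(L+H)\sum_{l=0}^N u_l$ and use the defining recursion. Since $u_0=I$ the operator $L$ annihilates $u_0$, and for $l\geq 1$ the relation $u_l=-L^{-1}Hu_{l-1}$ together with $LL^{-1}=I+(LL^{-1}-I)$ gives
\begin{equation*}
Lu_l=-LL^{-1}Hu_{l-1}=-Hu_{l-1}-(LL^{-1}-I)Hu_{l-1}.
\end{equation*}
Summing and telescoping yields
\begin{equation*}
\sum_{l=0}^N(Lu_l+Hu_l)=Hu_N-\sum_{l=0}^{N-1}(LL^{-1}-I)Hu_l,
\end{equation*}
so $\epsilon_t$ splits into a principal error $\psi\Prefix Hu_N$ and Taylor-remainder corrections $\psi\Prefix(LL^{-1}-I)Hu_l$.

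Second, the principal error lies in the expected weight class: by Proposition \ref{hweight} we have $H\in W_y^0$, and $u_N\in W_y^{-2N}$ by construction, so $Hu_N\in W_y^{-2N}$ with the same structural form as the $u_l$'s. The proof of Proposition \ref{propasy} depends only on the weight-filtration expansion of its argument, not on its provenance, so it applies verbatim to $Hu_N$. Summing over the finitely many form-degree pairs $(q,b)$ and using the exponential gain $e^{-8kt}$ available when the balance condition $l\geq b+q$ fails, we obtain
\begin{equation*}
\|\psi\Prefix Hu_N\|_{HS}^2\leq c'(k^{m-2N}+t^{2N-m}),
\end{equation*}
which is the claimed order of magnitude.

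Third, the truncation terms are strictly subdominant: $(LL^{-1}-I)$ applied to a smooth amplitude returns the Taylor remainder of degree $2N+1$ in $(z,\bar z)$, producing polynomial factors $(x-y)^J$ that raise the combined weight by at least $1$, so each $(LL^{-1}-I)Hu_l$ lies in $W_y^{-(2N+1)}$ or better. The same kernel estimate bounds each such contribution by $O(k^{m-2N-1}+t^{2N+1-m})$, dominated by the principal error. Adding the two estimates and enlarging the constant proves the bound.

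The main obstacle, I expect, is the careful bookkeeping of the exponential factors $e^{\pm 4kt}$ that $H$ carries through $2e^{4kt}\psi^{-1}e(F_A^{0,2})\psi$ and $2e^{-4kt}\psi^{-1}e^*(F_A^{0,2})\psi$; after repeated applications these produce $e^{4dkt}$ factors that must be matched against the $e^{kt(m-4q)}$ built into $\Prefix$ and the Gaussian $e^{-kr^2/(2\tanh(tk))}$. The weight filtration $W_y^\cdot$ was designed precisely so that this matching is automatic on each summand, and Proposition \ref{propasy} already performs exactly this bookkeeping; extending its proof to $Hu_N$ and to the Taylor remainders is the only genuine work.
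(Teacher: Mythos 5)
Your proposal is correct and follows essentially the same route as the paper: the telescoping identity $\epsilon_t=\psi\,\Prefix\bigl(Hu_N+\sum_{l=0}^{N-1}(I-LL^{-1})Hu_l\bigr)$, the observation that $H\in W_y^0$ and $u_N\in W_y^{-2N}$ so the first estimate of Proposition \ref{propasy} applies to $\psi\,\Prefix Hu_N$, and Gaussian estimates for the Taylor-truncation terms. The one caveat is your claim that the truncation terms gain a full extra power (that they lie in $W_y^{-(2N+1)}$ and are $O(k^{m-2N-1}+t^{2N+1-m})$): since the remainders retain their $k^p$ factors, the paper only records weight $-2N$ together with vanishing to order $2N$ and bounds them by the same order $O(k^{m-2N}+t^{2N-m})$ as the principal term, which still suffices for the stated inequality.
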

\begin{proof}By construction, 
\[\epsilon_t = \psi\Prefix \bigl(Hu_{N} + \sum_{l=0}^{N-1}(I-LL^{-1})Hu_l\bigr),\]
and 
$Hu_{N} + \sum_{l=0}^{N-1}(I-LL^{-1})Hu_l\in W_y^{-2N}$.
Moreover, $(I-LL^{-1})Hu_l$ vanishes to order $2N$. 
Hence 
\begin{align*}
\Bigl\| \psi \Prefix \sum_{l=0}^{N-1}(I-LL^{-1})Hu_l\Bigr\|_{HS}^2 
&\leq C \sum_{s\geq 2N}\sum_{2p \leq s-2N}\int_{M\times M}\bigl(\frac{ke^{ kt}}{\sinh(tk)}\bigr)^{2m}
e^{-\frac{k|x-y|^2}{2\tanh(tk)}}k^{2p}|x-y|^{2s}dxdy\\
&\leq C'\sum_{s\geq 2N}k^{m-2N} (\frac{e^{kt}}{\sinh(kt)})^m\tanh^{ s}(tk)\\
&\leq \begin{cases}C'' k^{ m-2N}  & \text{ if }tk\geq 1\\
C'' t^{2N-m}&\text{ if }tk\leq 1.\end{cases}	
\end{align*}
Because $H$ has weight $0$, the estimate for $\|\psi \Prefix Hu_N\|_{HS}^2$ is the same as the first estimate of Proposition \ref{propasy}, giving 
\[\|\psi \Prefix Hu_N\|_{HS}^2\leq \begin{cases}\hat C k^{m-2N} & \text{ if }tk\geq 1,\\ 
\hat C t^{2N-m} & \text{ if }tk\leq 1.\end{cases}\]
Combining these estimates gives the result. 
\end{proof}

\begin{proposition}\label{ptoq}For some $a>0$, independent of $k$ large, 
\[\| \Pi - Q_{k^{-1/2}} \|_{HS}\leq ak^{\frac{m-N-1}{2}}.\]
\end{proposition}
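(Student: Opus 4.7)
The plan is to split the error via the triangle inequality:
\[
\|\Pi - Q_{k^{-1/2}}\|_{HS} \leq \|\Pi - e^{-k^{-1/2}D_{A(k)}^2}\|_{HS} + \|e^{-k^{-1/2}D_{A(k)}^2} - Q_{k^{-1/2}}\|_{HS},
\]
and bound the two pieces separately. For the first summand, I would invoke inequality (\ref{hs1}) with $T = k^{-1/2}$, which yields
\[
\|\Pi - e^{-k^{-1/2}D_{A(k)}^2}\|_{HS} \leq e^{-k^{1/2}/2}\,\|e^{-k^{-1/2}D_{A(k)}^2/2}\|_{HS}.
\]
Using Corollary \ref{hsq} at $s=1/2$ (together with Proposition \ref{hserror} applied to $t = k^{-1/2}/2$ to transfer the bound from $Q_t$ to $e^{-tD_{A(k)}^2}$), the right side is $O(k^{m/2}e^{-k^{1/2}/2})$, which decays faster than any polynomial in $k$ and is in particular dominated by $k^{(m-N-1)/2}$ for $k$ large.

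For the second summand, the natural tool is Duhamel's formula (\ref{duhamel2}) with $t = k^{-1/2}$:
\[
\|e^{-k^{-1/2}D_{A(k)}^2} - Q_{k^{-1/2}}\|_{HS} \leq \int_0^{k^{-1/2}}\|\epsilon_s\|_{HS}\,ds.
\]
By Proposition \ref{hserror}, $\|\epsilon_s\|_{HS} \leq c^{1/2}\bigl(s^{(2N-m)/2} + k^{(m-2N)/2}\bigr)$. I would split the integral at the transition $s = k^{-1}$ between the $sk<1$ and $sk\geq 1$ regimes: on $[0,k^{-1}]$ the term $s^{(2N-m)/2}$ is dominant, and provided $N > m/2 - 1$ its integral is $O(k^{-(2N-m+2)/4})$; on $[k^{-1}, k^{-1/2}]$ the constant bound $k^{(m-2N)/2}$ integrates to $O(k^{(m-2N-1)/2})$. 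Both contributions are comfortably smaller than the target $k^{(m-N-1)/2}$ once $N$ is sufficiently large.

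The main technical obstacle is really just bookkeeping: making sure the HS-norm estimate from Proposition \ref{hserror} interacts correctly with the two regimes of $sk$, and verifying that $N$ can be taken large enough so that the $s^{N-m/2}$ integrand is integrable near $s=0$ and the overall bound matches the statement. The exponential decay in the $\Pi$-vs-heat-kernel step is essentially automatic thanks to the spectral gap (\ref{basest}) which gave rise to (\ref{hs1}), so no new analytic input is required beyond the results already established.
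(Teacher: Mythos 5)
Your proposal follows essentially the same route as the paper: the triangle inequality, the spectral-gap bound (\ref{hs1}) combined with Corollary \ref{hsq} and the Duhamel estimate to control $\|e^{-\frac{1}{2}k^{-1/2}D_{A(k)}^2}\|_{HS}$, and (\ref{duhamel2}) with Proposition \ref{hserror} for the second summand, so it is correct and not a different argument. (Your intermediate exponent for $\int_0^{k^{-1}} s^{N-m/2}\,ds$ should be $k^{(m-2N-2)/2}$ rather than $k^{(m-2N-2)/4}$, but either value is dominated by $k^{\frac{m-N-1}{2}}$, so the conclusion is unaffected.)
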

\begin{proof} We have
\[\| \Pi - Q_{k^{-1/2}} \|_{HS} \leq \| \Pi - e^{-k^{-1/2}D_{A(k)}^2} \|_{HS} + \| e^{-k^{-1/2}D_{A(k)}^2} - Q_{k^{-1/2}} \|_{HS}.\]
By (\ref{hs1}), we can estimate this quantity  by 
\begin{equation}\label{summands}  e^{-\frac{k^{1/2}}{2}}\| e^{-\frac{k^{-1/2}}{2}D_{A(k)}^2}\|_{HS} + \| e^{-k^{-1/2}D_{A(k)}^2} - Q_{k^{-1/2}} \|_{HS} .\end{equation}
 
Using (\ref{duhamel2}) and (\ref{hserror}),
we have, for $s<1$, the  estimate 
\begin{equation}\label{ptoqeq}\| e^{-sk^{-1/2}D_{A(k)}^2} - Q_{sk^{-1/2}}  \|_{HS} \leq  \int_0^{sk^{-1/2}} \|\epsilon_t\|_{HS}dt\leq Ck^{\frac{m-N-1}{2}}.\end{equation}
Using Corollary (\ref{hsq}),  we get the estimate
\begin{align*}
e^{-\frac{k^{1/2}}{2}}\| e^{-\frac{k^{-1/2}}{2}D_{A(k)}^2}\|_{HS} 
&\leq     e^{-\frac{k^{1/2}}{2}}\| Q_{k^{-1/2}}\|_{HS}  +   e^{-\frac{k^{1/2}}{2}}Ck^{\frac{m-N-1}{2}}\\
&\leq 2\alpha e^{-\frac{k^{1/2}}{2}}k^{\frac{m}{2}}
\end{align*}
for the first summand in (\ref{summands}). The desired estimate follows by adding the estimates for the two summands. 
\end{proof}

\begin{proposition}\label{ma411} (See \cite[Theorem 4.1.1]{MM}.)
\begin{equation}
\Tr\Pi = \frac{ k^{m}}{2^{ m}\pi^{ m}}\Vol(M)\rk(E) + O(k^{m-1}).
\end{equation}
\end{proposition}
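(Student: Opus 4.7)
The plan is to reduce $\Tr\Pi$ to $\Tr Q_{k^{-1/2}}$ via the spectral gap and Duhamel estimates of Section 2, then extract the leading term of $\Tr Q_{k^{-1/2}}$ from the explicit diagonal restriction of the Schwartz kernel built in Section 3.

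For the first reduction, I would pass from $\Pi$ to $e^{-k^{-1/2}D_{A(k)}^2}$ using (\ref{tr1}) together with the trace bound $\|e^{-sk^{-1/2}D_{A(k)}^2}\|_{Tr}\leq c^{2}k^{m}$ that follows from Corollary \ref{hsq}; this incurs only an exponentially small error. I would then pass from $e^{-k^{-1/2}D_{A(k)}^2}$ to $Q_{k^{-1/2}}$ via the Duhamel identity (\ref{duhamel}) and (\ref{trhs}), feeding in Proposition \ref{hserror} and Corollary \ref{hsq}. Taking $N$ large enough in the inductive construction of $Q_{t}$ makes the resulting trace-norm error $O(k^{m-1})$ or better.

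For the second step, on the diagonal $\psi(x,x)=I$ and $U(x,x)=1$, so
\[\tr q_{k^{-1/2}}(x,x)=\Bigl(\tfrac{k}{4\pi\sinh(k^{1/2})}\Bigr)^{m}\sum_{q\geq 0}e^{k^{1/2}(m-4q)}\tr\!\Bigl[P_{2q}\sum_{l=0}^{N}u_{l}(x,x)\Bigr].\]
The combined $q$-dependent coefficient simplifies to $(k/(2\pi))^{m}(1-e^{-2k^{1/2}})^{-m}e^{-4qk^{1/2}}$, so all $q\geq 1$ contributions are exponentially small. The distinguished $(q,l)=(0,0)$ contribution has $u_{0}=I$ and $\tr P_{0}=\rk(E)$, which after integration produces exactly $\frac{k^{m}}{2^{m}\pi^{m}}\rk(E)\Vol(M)$ modulo exponentially small corrections. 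For $q=0$ and $l\geq 1$, the filtration $u_{l}\in W_{y}^{-2l}$ combined with the fact that $u_{l}$ is a zero-order operator ($|I|=0$) gives an expansion whose $(x-y)^{J}$-terms vanish on the diagonal unless $|J|=0$; the constraint $2p-|J|\leq -2l$ then forces $p\leq -l$, so $u_{l}(x,x)=O(k^{-l})$, and the corresponding contribution is $O(k^{m-l})\subseteq O(k^{m-1})$.

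The main obstacle is the $(q=0,\,l\geq 1)$ case: the integral bound of Proposition \ref{propasy} is not sharp on the diagonal in this regime, so one has to read the pointwise order in $k$ directly off the weight filtration $W_{y}^{-2l}$ constructed inductively via (\ref{induct}) rather than from the packaged integral estimate. Everything else is standard bookkeeping within the heat-kernel machinery of Section 3, following the pattern of Ma--Marinescu, Theorem 4.1.1.
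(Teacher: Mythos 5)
Your reduction of $\Tr\Pi$ to $\Tr Q_{k^{-1/2}}$ contains a genuine gap. To compare $e^{-k^{-1/2}D_{A(k)}^2}$ with $Q_{k^{-1/2}}$ in trace norm you invoke the Duhamel identity (\ref{duhamel}) together with (\ref{trhs}), Proposition \ref{hserror} and Corollary \ref{hsq}. That chain gives at best
\begin{equation*}
\|Q_t-e^{-tD_{A(k)}^2}\|_{Tr}\ \leq\ \int_0^t\|e^{-(t-s)D_{A(k)}^2}\|_{HS}\,\|\epsilon_s\|_{HS}\,ds ,
\end{equation*}
and the first factor in the integrand is not controlled by anything in the paper: Corollary \ref{hsq} and (\ref{ptoqeq}) bound heat operators only at times comparable to $k^{-1/2}$, whereas here $t-s\to 0$ as $s\to t$, and $\|e^{-\tau D_{A(k)}^2}\|_{HS}^2=\Tr e^{-2\tau D_{A(k)}^2}\sim c\,\tau^{-m}$ as $\tau\to 0^+$ (a Laplace-type operator on a real $2m$-dimensional manifold), so the integrand behaves like $(t-s)^{-m/2}$ near $s=t$ and the integral diverges for $m\geq 2$. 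Repairing this would require trace-norm or pointwise on-diagonal control of $\epsilon_s$, or small-time trace bounds for the semigroup, none of which is set up in the paper. The paper's proof sidesteps the trace-norm comparison entirely: since $\Pi$ is an orthogonal projection, $\Tr\Pi=\|\Pi\|_{HS}^2$, so only the Hilbert--Schmidt comparison of Proposition \ref{ptoq} is needed (giving an error $O(k^{(m-N-1)/2}\cdot k^{m/2})$, negligible for $N$ large), and the leading term is then read off from $\|Q_{k^{-1/2}}\|_{HS}^2$ as a double integral over $M\times M$ with $u_0=I$, all $u_j$, $j>0$, contributing $O(k^{m-1})$.

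Your diagonal evaluation is mostly sound, with one further inaccuracy: the claim that all $q\geq 1$ contributions are exponentially small ignores the $e^{4dtk}$ factors ($d\leq q$) permitted inside $u_l$ by the weight filtration; when $d=q$ these cancel the prefactor $e^{-4qkt}$, which is precisely the complication addressed in the trace estimate of Proposition \ref{propasy}. Those terms are not exponentially small, but that estimate bounds them by $O(k^{m-2q})=O(k^{m-2})$, so the conclusion survives. Your treatment of the $q=0$, $l\geq 1$ terms ($P_0$ kills the $d>0$ pieces, $|J|=0$ on the diagonal forces $p\leq -l$, hence $u_l(x,x)=O(k^{-l})$ there) is correct and is exactly the pointwise substitute your route needs. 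The simplest fix overall is to adopt the paper's projection identity $\Tr\Pi=\|\Pi\|_{HS}^2$ and compute $\|Q_{k^{-1/2}}\|_{HS}^2$ as a double integral, which parallels your diagonal calculation but only ever uses Hilbert--Schmidt comparisons.
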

\begin{proof}
Because $\Pi$ is a projection, $\Tr\Pi = \|\Pi\|_{HS}^2$. By the preceding proposition, it suffices to compute the Hilbert--Schmidt norm of $Q_{k^{-1/2}}.$ Because we are computing only the leading term, we may ignore all $u_j$ for $j>0$. In particular,
\begin{align*}
\|Q_{k^{-1/2}}\|^2_{HS} &= \int_{M\times M}(\frac{ke^{ kt}}{4\pi \sinh(tk)} )^{2m}
e^{-\frac{k|x-y|^2}{2\tanh(tk)}}\rk(E) dydx +O(k^{m-1})\\
& =  \frac{ k^{m}}{2^{ m}\pi^{ m}}\Vol(M)\rk(E) +O(k^{m-1}).	
\end{align*} 
\end{proof}

\section{The Asymptotics}
\subsection{Projections}
Let $E\in S^q_{V,p},$ and let $A$ be an $S^q_{V,p}$-compatible connection. 
Write 
\[\Pi = \sum_{a,b=0}^{\floor{\frac{m}{2}}}\Pi_{2a}^{2b},\]
where $\Pi_{2a}^{2b} = P_{2b}\Pi P_{2a}$, and $\floor{\frac{m}{2}}$ denotes the integer part of $\frac{m}{2}$. Because $\Pi$ is self adjoint, 
\[\Pi_{2b}^{2a} = (\Pi_{2a}^{2b})^*.\]
Because $\Pi$ is a projection, we have 
\[  \Pi_{2a}^{2b}  =  \sum_{\mu =0}^{\floor{\frac{m}{2}}}\Pi_{2\mu}^{2b}\Pi_{2a}^{2\mu}.\]
In particular, 
\[\Pi_{2a}^{2a} = \Pi_{0}^{2a}(\Pi_{0}^{2a})^* + \sum_{\mu >0}\Pi_{2\mu}^{2a}(\Pi_{2\mu}^{2a})^*.\]

\begin{proposition}\label{trths}For $a>0$, 
\begin{equation*}
	\Tr \Pi_{2a}^{2a} = \|\Pi_{0}^{2a}\|_{HS}^2 + O(k^{ m-2a-2}).
\end{equation*}
\end{proposition}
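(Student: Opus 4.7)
The plan is to take the trace of the projection identity
\[\Pi_{2a}^{2a} = \Pi_0^{2a}(\Pi_0^{2a})^* + \sum_{\mu > 0} \Pi_{2\mu}^{2a}(\Pi_{2\mu}^{2a})^*\]
displayed immediately above the statement and use $\Tr(BB^*) = \|B\|_{HS}^2$ to reduce to
\[\Tr \Pi_{2a}^{2a} = \|\Pi_0^{2a}\|_{HS}^2 + \sum_{\mu > 0} \|\Pi_{2\mu}^{2a}\|_{HS}^2.\]
Since the sum over $\mu$ is finite ($\mu \leq \floor{m/2}$), it suffices to show that $\|\Pi_{2\mu}^{2a}\|_{HS}^2 = O(k^{m-2a-2})$ for each fixed $\mu \geq 1$.

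For such $\mu$, I would run the construction of Section \ref{sec:approxPi} with the truncation parameter $N$ chosen so that $N \geq 2a+1$. Proposition \ref{ptoq} then gives
\[\|\Pi_{2\mu}^{2a} - P_{2a}\,Q_{k^{-1/2}}\,P_{2\mu}\|_{HS} \leq \|\Pi - Q_{k^{-1/2}}\|_{HS} = O(k^{(m-N-1)/2}),\]
so this error contributes $O(k^{m-2a-2})$ to the squared Hilbert--Schmidt norm. The real work is to estimate $\|P_{2a}\,Q_{k^{-1/2}}\,P_{2\mu}\|_{HS}^2$ directly from the explicit form $q_t = \Prefix\,\psi\,\sum_l u_l$.

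In the K\"ahler setting the Levi--Civita parallel transport preserves bidegrees, so $P_{2j}(x)\psi(x,y) = \psi(x,y)P_{2j}(y)$, and the component $P_{2a}\,\Prefix\,\psi\, u_l\,P_{2\mu}$ is precisely the object whose Hilbert--Schmidt norm is bounded in Proposition \ref{propasy} with $q = a$ and $b = \mu$. Setting $t = k^{-1/2}$ puts us in the regime $tk = k^{1/2} \geq 1$, in which the proposition yields $\|P_{2a}\,\Prefix\,\psi\, u_l\,P_{2\mu}\|_{HS}^2 \leq C_l k^{m-2l}$ for $l \geq a + \mu$, and an exponentially small bound $C_l k^{m-2l} e^{-8k^{1/2}}$ for $l < a + \mu$. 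Applying the triangle inequality in $l$, the dominant term occurs at $l = a + \mu$ and yields
\[\|P_{2a}\,Q_{k^{-1/2}}\,P_{2\mu}\|_{HS}^2 = O(k^{m-2(a+\mu)}) = O(k^{m-2a-2}),\]
where the last equality uses $\mu \geq 1$. Combined with the heat kernel error, this gives $\|\Pi_{2\mu}^{2a}\|_{HS}^2 = O(k^{m-2a-2})$, and summing over the finitely many $\mu \geq 1$ completes the proof.

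The only subtle choice is to fix $N \geq 2a+1$ at the outset so that the approximation error of Proposition \ref{ptoq} is absorbed into the target order $O(k^{m-2a-2})$; this is free because $N$ is an arbitrary parameter in the construction of $Q_t$. With that choice in place the argument is essentially bookkeeping on top of Propositions \ref{propasy} and \ref{ptoq}, and I do not foresee a serious obstacle.
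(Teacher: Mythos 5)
Your proposal is correct and follows essentially the same route as the paper: take the trace of the projection identity to reduce to bounding $\sum_{\mu>0}\|\Pi_{2\mu}^{2a}\|_{HS}^2$, replace $\Pi$ by $Q_{k^{-1/2}}$ via Proposition \ref{ptoq}, and invoke Proposition \ref{propasy} to get $\|\Pi_{2\mu}^{2a}\|_{HS} = O(k^{\frac{m}{2}-a-\mu})$. The paper's proof is just a condensed version of this argument, with $N$ taken sufficiently large throughout rather than fixed explicitly as you do.
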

\begin{proof}We know that
\[\Tr \Pi_{2a}^{2a} = \|\Pi_{0}^{2a}\|_{HS}^2 +  \sum_{\mu >0}\|\Pi_{2\mu}^{2a}\|_{HS}^2.\]
By Proposition \ref{ptoq} and Proposition \ref{propasy}, 
\begin{equation}\label{projest}\|\Pi_{2\mu}^{2a}\|_{HS} =  \|P_{2a}Q_{k^{-1/2}}P_{2\mu}\|_{HS}  + O(k^{\frac{m-N-1}{2}})   =  O(k^{\frac{m}{2}-a-\mu}), \forall a>0.\end{equation}
\end{proof}

\begin{corollary} 
$E\in S^q_{V,p}$ with $A$ an $S^q_{V,p}$ compatible connection if and only if 
 $ \sum_{\mu\leq \frac{p}{2}}\|\Pi^{2q+2}_{2\mu}\|_{HS}^2 = O(k^{ m-2q-2-p}).$
\end{corollary}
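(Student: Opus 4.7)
The plan is to unpack the definition of $E\in S^q_{V,p}$ and rewrite the controlling quantity $\Tr P_{2q+2}\Pi$ entirely in terms of the Hilbert--Schmidt norms of the off-diagonal blocks $\Pi^{2q+2}_{2\mu}$, then observe that the tail $\mu > p/2$ is automatically small enough to be absorbed.

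First I would note that by cyclicity of the trace and the idempotence of $P_{2q+2}$,
\[
\Tr P_{2q+2}\Pi = \Tr(P_{2q+2}\Pi P_{2q+2}) = \Tr \Pi_{2q+2}^{2q+2}.
\]
Using $\Pi^2 = \Pi$ and inserting $\sum_\mu P_{2\mu}$ between the two copies (exactly as in the displayed computation preceding Proposition \ref{trths}), I would obtain
\[
\Tr P_{2q+2}\Pi \;=\; \sum_{\mu=0}^{\floor{m/2}} \|\Pi_{2\mu}^{2q+2}\|_{HS}^2 .
\]
Split this sum at $\mu = p/2$:
\[
\Tr P_{2q+2}\Pi \;=\; \sum_{\mu\leq p/2}\|\Pi_{2\mu}^{2q+2}\|_{HS}^2 \;+\; \sum_{\mu > p/2}\|\Pi_{2\mu}^{2q+2}\|_{HS}^2 .
\]

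Next I would estimate the tail. Since $q+1 > 0$, Proposition \ref{trths} (more precisely the bound (\ref{projest}) in its proof, which comes from Propositions \ref{ptoq} and \ref{propasy}) gives
\[
\|\Pi_{2\mu}^{2q+2}\|_{HS}^2 = O\bigl(k^{m-2(q+1)-2\mu}\bigr).
\]
For integer $\mu$ with $\mu > p/2$ we have $2\mu \geq p+1$, hence each such term is $O(k^{m-2q-3-p})$, i.e.\ strictly smaller order than $O(k^{m-2q-2-p})$. Therefore
\[
\Tr P_{2q+2}\Pi = \sum_{\mu\leq p/2}\|\Pi_{2\mu}^{2q+2}\|_{HS}^2 + O(k^{m-2q-3-p}).
\]

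The conclusion is now immediate: the condition defining $S^q_{V,p}$-compatibility, $\Tr P_{2q+2}\Pi = O(k^{m-2q-2-p})$, is equivalent to $\sum_{\mu \leq p/2}\|\Pi_{2\mu}^{2q+2}\|_{HS}^2 = O(k^{m-2q-2-p})$, since the two quantities differ by a term of strictly better order. There is no real obstacle here; the only mild subtlety is making sure the index bookkeeping $2\mu \geq p+1$ is valid for both parities of $p$, which it is because $\mu$ is an integer.
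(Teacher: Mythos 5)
Your proposal is correct and is essentially the paper's own (implicit) argument: the corollary is meant to follow directly from the block decomposition $\Tr\Pi_{2q+2}^{2q+2}=\sum_{\mu}\|\Pi_{2\mu}^{2q+2}\|_{HS}^2$ together with the estimate (\ref{projest}) from the proof of Proposition \ref{trths}, which makes the tail $\mu>p/2$ of order $O(k^{m-2q-3-p})$ and hence negligible, exactly as you argue. Your parity bookkeeping ($2\mu\geq p+1$ for integer $\mu>p/2$) is also the right observation and closes the argument.
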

We henceforth focus our attention on the analysis of $\Pi_0^{2q+2}$. Proposition \ref{ptoq} allows us to consider instead $P_{2q}Q_{k^{-1/2}}P_0$ at the cost of introducing errors with $O(k^{\frac{m-N-1}{2}})$ Hilbert--Schmidt norm.  We therefore assume in the following calculations that $t=k^{-1/2}$.

\begin{proposition}\label{naya}For $a>0$,  
\begin{gather}
\|\Pi_0^{2a} \|_{HS}^2 = k^{m-2a}\frac{2^{-2a}}{(2\pi)^m(a!)^2}\|(F^{0,2}_A )^a\|^2_{L^2}+ O(k^{m-2a - 1}).
\end{gather}  
\end{proposition}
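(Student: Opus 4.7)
The plan is to replace $\Pi_0^{2a}=P_{2a}\Pi P_0$ by $P_{2a}Q_{k^{-1/2}}P_0$ and extract the leading order of the latter from the inductive construction of $q_t$. By Proposition~\ref{ptoq}, choosing $N\geq 2a+1$ gives $\|\Pi-Q_{k^{-1/2}}\|_{HS}=O(k^{(m-N-1)/2})$. Combined with the a priori bound $\|\Pi_0^{2a}\|_{HS},\,\|P_{2a}Q_{k^{-1/2}}P_0\|_{HS}=O(k^{(m-2a)/2})$ (from Proposition~\ref{propasy} with $\ell=a$, $q=a$, $b=0$), this yields $\bigl|\|\Pi_0^{2a}\|_{HS}^2-\|P_{2a}Q_{k^{-1/2}}P_0\|_{HS}^2\bigr|=O(k^{m-2a-1})$, so it suffices to compute the leading term of $\|P_{2a}Q_{k^{-1/2}}P_0\|_{HS}^2$.

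Among the summands constituting $H$, only $R:=2e^{4kt}\psi^{-1}e(F_A^{0,2})\psi$ raises the form degree; the terms $\psi^{-1}\hat{\mathcal F}\psi$, $\nabla^*\psi^{-1}\nabla\psi$, the Christoffel and parallel-transport corrections, and $2e^{-4kt}\psi^{-1}e^*(F_A^{0,2})\psi$ preserve or lower degree. To send $(0,0)$-forms to $(0,2a)$-forms therefore requires at least $a$ factors of $R$ in the iteration $u_\ell=-L^{-1}Hu_{\ell-1}$, so $P_{2a}u_\ell P_0=0$ for $\ell<a$ (as recorded in Proposition~\ref{propasy}), and for $\ell>a$ the contribution is $O(k^{m-2\ell})=O(k^{m-2a-2})$ to the squared HS-norm, with cross-terms $O(k^{m-2a-1})$ by Cauchy--Schwarz. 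Hence the leading term of $P_{2a}Q_{k^{-1/2}}P_0$ comes entirely from $P_{2a}u_a(x,y)P_0=(-L^{-1}R)^aI$ (plus subleading pieces).

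Replacing $\psi^{-1}e(F_A^{0,2})\psi$ by $e(F_A^{0,2}(y))$ (the corrections arising from Lemma~\ref{JJ0} and (\ref{nablapsihat})--(\ref{delpsihat}) carry extra powers of $r$, hence extra powers of $k^{-1/2}$ under the Gaussian weight), the operator $L$ reduces on spatially constant $\End(S_y)$-valued functions to $\partial_t$, and $L^{-1}(e^{4jkt})=(e^{4jkt}-1)/(4jk)\sim e^{4jkt}/(4jk)$ as $tk\to\infty$. A short induction then yields
\[
P_{2a}u_a(x,y)P_0\approx\frac{(-1)^a}{2^a a!\,k^a}\,e^{4akt}\,e(F_A^{0,2}(y))^a.
\]
At $t=k^{-1/2}$, multiplication by $\Prefix=\bigl(\tfrac{k}{4\pi\sinh(tk)}\bigr)^m e^{-kr^2/(4\tanh(tk))}e^{kt(m-4a)}$ consumes $e^{4akt}$: using $\sinh(k^{1/2})\sim e^{k^{1/2}}/2$ and $\tanh(k^{1/2})\to 1$, the factor $\bigl(\tfrac{k}{4\pi\sinh(tk)}\bigr)^m e^{kt(m-4a)}e^{4akt}$ collapses to $k^m/(2\pi)^m$, giving
\[
P_{2a}q_{k^{-1/2}}(x,y)P_0\approx\frac{(-1)^ak^{m-a}}{(2\pi)^m 2^a a!}\,e^{-kr^2/4}\,\psi(x,y)\,e(F_A^{0,2}(y))^a.
\]

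Finally, computing the Hilbert--Schmidt norm via (\ref{hsn}), the unitarity $\psi^*\psi=I$ removes the parallel-transport factor, and the Gaussian integral $\int_M e^{-kr^2/2}\,dx\sim(2\pi/k)^m$ (in geodesic coordinates centered at $y$) produces
\[
\|P_{2a}Q_{k^{-1/2}}P_0\|_{HS}^2\approx\frac{k^{m-2a}}{(2\pi)^m 2^{2a}(a!)^2}\int_M\tr\bigl(e(F_A^{0,2})^{a*}e(F_A^{0,2})^a\bigr)\,dy=\frac{k^{m-2a}\,2^{-2a}}{(2\pi)^m(a!)^2}\,\|(F_A^{0,2})^a\|_{L^2}^2,
\]
the final equality being the pointwise definition of the norm on $\Lambda^{0,2a}\otimes\End(E)$. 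The main obstacle is the bookkeeping at order $O(k^{m-2a-1})$: one must verify that the subleading corrections to $\psi^{-1}e(F_A^{0,2})\psi$, to the $\sinh$ and $\tanh$ expansions, to $L^{-1}$ acting on non-constant arguments coming from the degree-preserving part of $H$, and to $\Prefix$ integrate either to zero by parity in $x-y$ or to size $O(k^{m-2a-1})$. The filtration $W_y^\cdot$ combined with the estimates of Proposition~\ref{propasy} provides the organizing principle: every additional weight lowers the power of $k$ by one.
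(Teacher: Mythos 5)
Your proposal is correct and follows essentially the same route as the paper: replace $\Pi_0^{2a}$ by $P_{2a}Q_{k^{-1/2}}P_0$, use degree and weight considerations to isolate $P_{2a}u_aP_0=(-L^{-1}2e^{4kt}\psi^{-1}e(F_A^{0,2})\psi)^aI$, freeze $F_A^{0,2}$ at $y$, apply the $L^{-1}$ computation (the paper's Lemma \ref{lpowera}), and evaluate the Gaussian integral, with matching constants. The only cosmetic difference is that you bound the discrepancy of squared norms via Cauchy--Schwarz and flag the parity argument for the weight $-2a-1$ corrections explicitly, which the paper leaves implicit.
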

\begin{proof}
Since 
$P_0 u_b^*(x,y)P_{2a}u_l(x,y)P_0\in W^{-2b-2l}_y,$ we have 
\begin{multline*}
\left|\int_{M\times M}   (\frac{k}{4\pi\sinh(tk)})^{2m}U^2e^{2kt(m-4a)}\tr P_0  u_b^*(x,y)P_{2a}u_l(x,y)P_0 dydx\right|\\
 \leq \int_{M\times M} (\frac{k}{4\pi\sinh(tk)})^{2m}e^{-\frac{k}{2}r^2}e^{2mkt }\sum_{2p-|J|\leq -2l-2b}C_{p,J}k^pr^{|J|}  dydx \leq Ck^{m-l-b}.\end{multline*}
Now we estimate 
\begin{align*}
  \|\Pi_0^{2a}\|_{HS}^2 &=    \|P_{2a}Q_{k^{-1/2}}P_{0}\|_{HS}^2 +O(k^{\frac{m-N-1}{2}})\\
&= \int_{M\times M}   (\frac{k}{4\pi\sinh(tk)})^{2m}U^2e^{2kt(m-4a)}\tr P_0\sum_{b=a}^N u_b^*(x,y)P_{2a}\sum_{l=a}^N u_l(x,y)P_0 dydx+O(k^{\frac{m-N-1}{2}})\\
&= \int_{M\times M}   (\frac{k}{4\pi\sinh(tk)})^{2m}U^2e^{2kt(m-4a)}\tr P_0 u_a^*(x,y)P_{2a}u_a(x,y)P_0 dydx+ O(k^{m-2a - 1}), 	
\end{align*}
as long as $N\geq2a$. (We always, of course, choose $N$ sufficiently large.)
Our computation showing that 
\[\left|\int_{M\times M}   (\frac{k}{4\pi\sinh(tk)})^{2m}U^2e^{2kt(m-2a)}\tr P_0  u_b^*(x,y)P_{2a}u_l(x,y)P_0 dydx\right|  \leq Ck^{m-l-b} \] extends immediately to show that any term in $P_0 u_a^*(x,y)P_{2a}u_a(x,y)P_0$ of weight less than $-2a$ contributes at most $O(k^{m-2a - 1})$ to the integral. 

Degree considerations show that 
\begin{equation}\label{uaexp}P_{2a}u_aP_0 = \Bigl(-L^{-1}2e^{4kt}\psi^{-1}(x,y)e(F_A^{0,2}(x))\psi(x,y)\Bigr)^aI.\end{equation}
Observe that $\psi^{-1}(x,y)e(F_A^{0,2}(x))\psi(x,y) = e(F_A^{0,2}(y)) + O(x-y),$ and the $O(x-y)$ term is weight $-1$. Hence we may replace $P_{2a}u_aP_0 $ by $ (-L^{-1}2e^{4kt} e(F_A^{0,2}(y)))^aI $ in the computation of the Hilbert--Schmidt norm, introducing at most an $O(k^{m-2a - 1})$ error. Referring to Lemma \ref{lpowera} below for a computation of $ (-L^{-1}2e^{4kt} e(F_A^{0,2}(y)))^aI $, we see that 
\begin{align*}
  \|\Pi_0^{2a}\|_{HS}^2 &= \int_{M\times M}(\frac{k}{4\pi\sinh(tk)})^{2m}U^2e^{2kt(m-4a)}\tr P_0 \frac{2^{-2a}e^{8atk}}{ k^{2a}(a!)^2}e^*(F_A^{0,2}(y))^ae(F_A^{0,2}(y))^aP_0 dydx  + O(k^{m-2a - 1}) \\
&= k^{m-2a}\int_{M}(\frac{1}{2\pi })^{m} \tr P_0 \frac{2^{-2a}}{ (a!)^2}e^*(F_A^{0,2}(y))^ae(F_A^{0,2}(y))^aP_0 dy  + O(k^{m-2a - 1}),	
\end{align*}
proving the asserted equality. 
\end{proof}

Recall double-factorial notation: 
\[j!!=\begin{cases}j\cdot (j-2)\cdots 1,&\text{ if $j$ is odd,}\\
		   j\cdot (j-2)\cdots 2,&\text{ if $j$ is even.}
\end{cases}\]
\begin{lemma}\label{lpowera}
Suppose $t<1$, $k>>1$ and $tk>>1$.
For $p\geq 0$, we have\begin{align*}
 \Bigl(-L^{-1}2e^{4kt} e\bigl(F_A^{0,2}(y)\bigr)\Bigr)^aI &= (-1)^a\frac{e^{4atk}}{2^{a}k^aa!}e\bigl(F_A^{0,2}(y)\bigr)^a + O(e^{ (4a-1)tk}),\\
L^{-1} e^{4pkt} z^J\bar z^K &=  
\frac{e^{4ptk}}{2 k(2p+|K|)}z^J\bar z^K  + O\Bigl(r^{|J|+|K|}e^{ (4p-1)tk}+ r^{|J|+|K|}e^{4ptk}\sum_{j=0}^{\min\{|J|,|K|\}} \frac{1}{r^{2j}k^j}\Bigr) ,\\
(-\LI 2e^{4kt})^ae^{4pkt}z^J\bar z^K &=\frac{(-1)^a(2p+|K|)!!}{k^a(2p+|K|+2a)!!}e^{4(a+p)tk}z^J\bar{z}^K+O\Bigl(r^{|J|+|K|}e^{ (4(a+p)-1)tk}+r^{|J|+|K|}e^{4ptk}\sum_{j=0}^{\min\{|J|,|K|\}} \frac{1}{r^{2j}k^j}\Bigr).
\end{align*}
\end{lemma}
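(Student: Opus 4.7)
The plan is to first establish part two by a direct ansatz computation, and then to derive parts three and one by iteration. For part two, I would try $u = c\, e^{4pkt} z^J \bar z^K$ and apply $L$ using Equation (\ref{Lfirst}); the algebraic contribution is
\[\bigl[4pk + (|J|+|K|)k/\tanh(tk) + k(|K|-|J|)\bigr]\,u = \bigl[2k(2p+|K|) + O(ke^{-2tk})\bigr]\,u,\]
so choosing $c = 1/(2k(2p+|K|))$ makes $Lu$ agree with $e^{4pkt} z^J \bar z^K$ to leading order. Two error sources remain: (i) the correction coming from $1/\tanh(tk) - 1 = O(e^{-2tk})$, which after inversion produces the $r^{|J|+|K|} e^{(4p-1)tk}$ remainder; and (ii) the $c\, e^{4pkt} \Delta_E(z^J\bar z^K)$ contribution, whose $z,\bar z$-degree is lowered by two. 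Handling (ii) requires re-applying the ansatz to the lower-degree term, picking up an extra factor of $1/k$ per step, and terminating after at most $\min(|J|,|K|)$ steps when the polynomial part vanishes; this produces the sum $\sum_{j=0}^{\min(|J|,|K|)} 1/(r^{2j} k^j)$ in the stated remainder.

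With part two in hand, part three follows by induction on $a$. A single application of $-L^{-1}\!\cdot 2 e^{4kt}$ to $e^{4pkt} z^J \bar z^K$ shifts $p \mapsto p+1$ and multiplies the leading coefficient by $-2/(2k(2p+|K|+2)) = -1/(k(2p+|K|+2))$, while leaving $J, K$ unchanged. Iterating $a$ times accumulates the product $\prod_{j=1}^{a} -1/(k(2p+|K|+2j))$, which telescopes to $(-1)^a (2p+|K|)!!/(k^a (2p+|K|+2a)!!)$ upon recognizing the denominator as $(2p+|K|+2a)!!/(2p+|K|)!!$. For part one, I would apply part three with $J = K = \emptyset$ and $p = 0$, starting from $I$. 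Since $e(F_A^{0,2}(y))$ is left exterior multiplication by a form independent of $x$, it commutes with $L^{-1}$, so the $a$-fold composition equals $F_A^{0,2}(y)^{\wedge a} \cdot (-L^{-1} 2e^{4kt})^a \cdot 1$; the scalar part is then read off from part three, and the coefficient simplifies via $(2a)!! = 2^a a!$ to give the claimed $(-1)^a/(2^a k^a a!)$.

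The main technical nuisance will be controlling how the two types of error terms propagate under the iterated composition in part three, and verifying that the $\Delta_E$-generated polynomial remainders of strictly lower $z,\bar z$-degree do not compound to spoil the stated form. The explicit integral representation of $L^{-1}$ recorded just after Equation (\ref{Lfirst}), combined with the weight filtration $W_y^{\cdot}$ established earlier, makes this tractable: each $\Delta_E$ step lowers weight by $-2$ and each $L^{-1}$ also lowers weight by $-2$, so remainders stay within a controlled weight class, and their sizes can be estimated directly from Gaussian integration in the $tk \gg 1$ regime.
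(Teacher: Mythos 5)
Your proposal is correct in substance but organized differently from the paper. The paper's proof is a direct evaluation of the defining Duhamel/Gaussian integral for $L^{-1}$: for the first identity it computes exactly $L^{-1}2e^{4ftk}a(y)=\frac{e^{4ftk}-1}{2fk}a(y)$ and iterates over $f=1,\dots,a$ to get $\frac{1}{2^ak^aa!}$, and it obtains the second and third identities by the same direct substitution into the integral formula (expanding $(\sqrt{\tfrac{4\pi}{k}(tk-s)}y+z)^J(\cdots)^K$ and using the $\mu_{JK}$ factors in the regime $tk\gg1$). You instead never evaluate the integral for the main term: you use the conjugation identity (\ref{Lfirst}) to solve $Lu\approx e^{4pkt}z^J\bar z^K$ by an ansatz, read off the coefficient $\frac{1}{2k(2p+|K|)}$ from the algebraic factor $k\bigl(4p+\tfrac{|J|+|K|}{\tanh(tk)}+|K|-|J|\bigr)\approx 2k(2p+|K|)$, and then get the third identity by induction (the telescoping double factorial is right, since each application shifts $p\mapsto p+1$ and multiplies by $-1/(k(2p+|K|+2j))$) and the first identity as the special case $J=K=\emptyset$, $p=0$, using that $e(F_A^{0,2}(y))$ commutes with the scalar operator $L^{-1}$ and $(2a)!!=2^aa!$. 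This buys a cleaner coefficient bookkeeping and reduces three computations to one, at the cost of having to propagate two error streams through the induction, which your weight-filtration argument does control.

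One point you must add for the argument to literally compute $L^{-1}$: $L^{-1}$ is not an abstract inverse but the specific operator defined by the integral from $0$ to $t$, i.e.\ the right inverse whose output vanishes at $t=0$ (it raises the order of vanishing of $a_{JK}$ at $tk=0$ by one). Producing $u$ with $Lu\approx f$ therefore identifies $L^{-1}f$ only up to the homogeneous evolution of $u|_{t=0}$; concretely, this is the $-1$ in the paper's exact formula $\frac{e^{4ftk}-1}{2fk}$, which your ansatz $\frac{e^{4ftk}}{2fk}$ misses. In the stated regime $tk\gg1$ this correction is of size $O(k^{-1})$ times a polynomial and is absorbed by the displayed error terms (indeed by $O(e^{(4a-1)tk})$ in the first identity), so nothing breaks, but as written the step ``choosing $c$ makes $Lu$ agree with $f$ to leading order, hence after inversion\dots'' silently conflates an approximate solution of $Lu=f$ with the value of the particular inverse $L^{-1}f$; one sentence noting $L^{-1}Lu=u-(\text{free evolution of }u|_{t=0})$ and bounding that term closes the gap. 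Also note the formula of the second identity degenerates when $2p+|K|=0$; your induction only ever invokes it with $p+1\geq1$, so you should say so explicitly.
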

\begin{proof}We compute 
\[L^{-1}2e^{4ftk}a(y) = \frac{e^{4ftk}-1}{2fk}a(y).\]
Hence 
\[ (-L^{-1}2e^{4kt} e(F_A^{0,2}(y)))^aI = (-1)^a\frac{e^{4atk}}{2^ak^aa!}e(F_A^{0,2}(y))^a + O(e^{(4a-1)tk}).\]
The proof of the second equality is similarly a direct application of the definition of $L^{-1}$. 
\end{proof}

\begin{corollary}\label{corop1} 
$E\in S^q_{V,1}$ with $A$ an $S^q_{V,1}$ compatible connection if and only if 
\[(F_A^{0,2})^{q+1}=0.\] 
\end{corollary}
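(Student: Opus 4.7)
The plan is to combine Propositions \ref{trths} and \ref{naya} directly to extract the leading-order coefficient of $\Tr P_{2q+2}\Pi$ in powers of $k$, and then observe that vanishing of this coefficient is precisely equivalent to the pointwise vanishing of $(F_A^{0,2})^{q+1}$.

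First, I would unwind the definition: $A$ is $S^q_{V,1}$-compatible if and only if $\Tr P_{2q+2}\Pi = O(k^{m-2q-3})$. Since $P_{2q+2}$ is a self-adjoint projection commuting with itself, cyclicity of the trace gives $\Tr P_{2q+2}\Pi = \Tr \Pi^{2q+2}_{2q+2}$. Applying Proposition \ref{trths} with $a=q+1$ yields
\begin{equation*}
\Tr P_{2q+2}\Pi \;=\; \|\Pi_0^{2q+2}\|_{HS}^2 + O(k^{m-2q-4}).
\end{equation*}

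Next, I would substitute the asymptotic expansion of Proposition \ref{naya} (again with $a=q+1$):
\begin{equation*}
\|\Pi_0^{2q+2}\|_{HS}^2 \;=\; k^{m-2q-2}\,\frac{2^{-2q-2}}{(2\pi)^m\bigl((q+1)!\bigr)^2}\,\bigl\|(F_A^{0,2})^{q+1}\bigr\|_{L^2}^2 \;+\; O(k^{m-2q-3}).
\end{equation*}
Combining the two displays, the leading behavior of $\Tr P_{2q+2}\Pi$ as $k\to\infty$ is given by the coefficient of $k^{m-2q-2}$, which is a positive multiple of $\|(F_A^{0,2})^{q+1}\|_{L^2}^2$.

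The equivalence now falls out: if $A$ is $S^q_{V,1}$-compatible, then $\Tr P_{2q+2}\Pi = O(k^{m-2q-3})$ forces the $k^{m-2q-2}$ coefficient to be zero, hence $\|(F_A^{0,2})^{q+1}\|_{L^2}^2=0$, and by continuity $(F_A^{0,2})^{q+1}\equiv 0$. Conversely, if $(F_A^{0,2})^{q+1}=0$, then the leading term vanishes identically and the combined expansion immediately gives $\Tr P_{2q+2}\Pi = O(k^{m-2q-3})$, so $A$ is $S^q_{V,1}$-compatible.

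There is essentially no obstacle here; all the analytic work has already been absorbed into Propositions \ref{trths} and \ref{naya}. The only thing to be careful about is bookkeeping: one must verify that the remainder in Proposition \ref{naya} is of order $O(k^{m-2q-3})$, i.e.\ one order below the leading term, so that the leading coefficient is genuinely detected by the $S^q_{V,1}$ condition (which demands decay at rate $k^{m-2q-3}$). This matching of orders is the only reason the corollary is a clean \emph{if and only if} rather than a one-sided implication.
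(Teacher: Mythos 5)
Your proposal is correct and is exactly the argument the paper intends: the corollary is stated there without proof as an immediate consequence of Propositions \ref{trths} and \ref{naya} (equivalently Equation (\ref{hitr})), whose leading $k^{m-2q-2}$ coefficient is a positive multiple of $\|(F_A^{0,2})^{q+1}\|_{L^2}^2$. Your bookkeeping of the error orders ($O(k^{m-2q-4})$ from Proposition \ref{trths}, $O(k^{m-2q-3})$ from Proposition \ref{naya}) matches the paper and correctly yields the two-sided equivalence.
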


\begin{corollary}\label{corop1.5} 
If $E\in S^q_{V,1}$, then $ch_p(E)\in (S_H^{p-q}\cap\bar S_H^{p-q})H^{2p}(M,\IQ),\,\,\forall p<q+3.$
\end{corollary}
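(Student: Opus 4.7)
The plan is to pass from the hypothesis $E\in S^q_{V,1}$ to an algebraic condition on the curvature of a compatible connection, and then analyze the Chern--Weil representative of $\ch_p(E)$ in Hodge bidegree. By Corollary \ref{corop1} there is a connection $A$ on $E$ with $(F_A^{0,2})^{q+1}=0$. Decompose $F_A=\alpha+\beta+\gamma$ into its $(2,0)$-, $(1,1)$- and $(0,2)$-components, and represent $\ch_p(E)$ by $\frac{i^p}{(2\pi)^p p!}\tr F_A^p$. Expanding this product, the component of bidegree $(j,2p-j)$ is a sum of traces of length-$p$ words in $\alpha,\beta,\gamma$ with $a$, $b$, $c$ copies of each, constrained by $a+b+c=p$ and $c-a=p-j$. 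To prove $\ch_p(E)\in S_H^{p-q}$ it suffices to show that the $(j,2p-j)$ component vanishes when $j<p-q$, which in turn forces $c\geq a+q+1\geq q+1$ in every contributing word.

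Under the assumption $p<q+3$, the range $j<p-q$ reduces to $j\in\{0,1\}$ (the cases $p\leq q$ being trivial since then $S_H^{p-q}=H^{2p}$). For $j=0$ the only word is $\gamma^p$, and $\gamma^p=\gamma^{p-q-1}\gamma^{q+1}=0$. For $j=1$, which is only relevant when $p=q+2$, every contributing word has a single $\beta$ and $q+1$ copies of $\gamma$; cyclically permuting the $\beta$ to the front identifies each such trace with $\tr(\beta\gamma^{q+1})=0$. Hence the low-$j$ components of $\tr F_A^p$ vanish as forms, giving $\ch_p(E)\in S_H^{p-q}$. Since $\ch_p(E)$ is a real cohomology class and complex conjugation swaps the $(j,2p-j)$- and $(2p-j,j)$-components, this simultaneously forces the high-$j$ components to vanish, so $\ch_p(E)\in \bar S_H^{p-q}$ as well.

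The step that breaks when one tries to remove the restriction $p<q+3$ is precisely the use of cyclic trace invariance to collapse an arrangement of a single $\beta$ among $\gamma$'s to a single block $\tr(\beta\gamma^{q+1})$. For $j\geq 2$ (i.e.\ $p\geq q+3$) the relevant words have at least two non-$\gamma$ letters, and cyclic permutations alone cannot bring all $\gamma$'s together; generic terms such as $\tr(\beta\gamma^i\beta\gamma^{q+1-i})$ with $1\leq i\leq q$ are then nonzero as forms. Improving the range of $p$ should therefore require either a stronger hypothesis (for instance $S^q_{V,p}$ with larger $p$) providing additional algebraic relations on $F_A^{0,2}$, or an argument at the level of cohomology that produces exact primitives for the offending bidegree pieces rather than killing them pointwise.
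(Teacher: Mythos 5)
Your proof is correct and follows essentially the same route as the paper: expand $\tr F_A^p$ into words in the curvature components, note that the low holomorphic-degree pieces force at least $q+1$ factors of $F_A^{0,2}$, and kill them using $(F_A^{0,2})^{q+1}=0$ from Corollary \ref{corop1} together with cyclic invariance of the trace. The only (harmless) variation is that you obtain the conjugate inclusion $\bar S_H^{p-q}$ from the reality of the rational class $\ch_p(E)$, whereas the paper argues "symmetrically" with $F_A^{2,0}$ at the level of forms.
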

\begin{proof}Let $A$ be an $S^q_{V,1}$ compatible connection on $A$. We will treat the case $p=q+2$. The other cases follow from similar, albeit simpler, considerations. By Hodge theory, it suffices to show that 
$\tr\, F_A^p$ is a sum of $(s,p-s)$ forms with $2\leq s\leq p-2.$ Expanding $\tr\, F_A^p=\tr(F_A^{2,0}+F_A^{1,1}+F_A^{0,2})^p$ as the sum of the trace of a word in the letters $F^{2,0}_A,F_A^{1,1}$, and $F_A^{0,2}$, we see that it suffices to show that the letter $F_A^{0,2}$ occurs at most $p-2$ times in any word with nonzero trace (and symmetrically $F_A^{2,0}$ occurs at most $p-2$ times in any word with nonzero trace). Clearly, $\tr(F_A^{0,2})^p=0$  by Corollary \ref{corop1}. By the cyclic invariance of the trace, we also have $\tr(F_A^{0,2})^a(sF_A^{1,1}+tF_A^{2,0}) (F_A^{0,2})^{q+1-a}=\tr(F_A^{0,2})^{q+1}(sF_A^{1,1}+tF_A^{2,0}) = 0$, as desired. 
\end{proof}

\begin{corollary}\label{corop2} 
Let  $E\in S^q_{V,1}$ with $S^q_{V,1}$ compatible connection $A$. Then    
\[\|\Pi_{2\mu}^{2q+2}\|^2 = O(k^{m-2q-3-2\mu}).\]
\end{corollary}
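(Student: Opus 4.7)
The plan is to extend the asymptotic computation of Proposition \ref{naya} to $\|\Pi_{2\mu}^{2q+2}\|_{HS}^2$ for general $\mu$. By Proposition \ref{ptoq} it suffices to compute $\|P_{2q+2}Q_{k^{-1/2}}P_{2\mu}\|_{HS}^2$ modulo the negligible error $O(k^{(m-N-1)/2})$. The baseline bound $O(k^{m-2q-2-2\mu})$ already follows from Proposition \ref{propasy}; the aim is to show that under $E\in S^q_{V,1}$---equivalent by Corollary \ref{corop1} to $(F_A^{0,2})^{q+1}=0$---the leading coefficient of this norm vanishes, so that the next order $O(k^{m-2q-3-2\mu})$ dominates.

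Expanding $Q_{k^{-1/2}}=\mathcal{U}\psi\sum_l u_l$, the contribution at $O(k^{m-2q-2-2\mu})$ is concentrated in the $l=q+1+\mu$ term. Writing each $-L^{-1}H$ step of the iteration $u_{l+1}=-L^{-1}Hu_l$ as a choice among the raising piece $2e^{4kt}\psi^{-1}e(F_A^{0,2})\psi$, the lowering piece $2e^{-4kt}\psi^{-1}e^*(F_A^{0,2})\psi$, and the degree-preserving remainder $H_h$, track the cumulative exponential $e^{4Pkt}$: by Lemma \ref{lpowera} together with $L^{-1}(e^{4Pkt})=(e^{4Pkt}-1)/(4Pk)$, applying $L^{-1}$ sends $P\mapsto\max(P,0)$ at leading order, producing an extra factor $t$ exactly when $P=0$. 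Reaching $P_{\text{final}}=q+1$---required to cancel the factor $e^{-8(q+1)tk}$ coming from $|\mathcal{U}|^2$ on $(0,2q+2)$-forms, without which the contribution is exponentially small---forces $r=q+1$ raisings, every lowering to reflect at $P=0$, and no $H_h$-factor at $P=0$. This pins down a unique polynomial-leading ordering: apply the $\mu$ lowerings first (each reflecting at $P=0$) and then the $q+1$ raisings, with no degree-preserving factors in the sequence.

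Iterative application of Lemma \ref{lpowera} to this ordering, after conjugating $F_A^{0,2}(x)$ to $F_A^{0,2}(y)$ via $\psi$ (with $O(|x-y|)$ subleading correction), yields
\[
P_{2q+2}\,u_{q+1+\mu}\,P_{2\mu}
\;\sim\;
\frac{(-1)^{q+1+\mu}\,e^{4(q+1)tk}}{C(q,\mu)\,k^{q+1+\mu}}
\bigl(e(F_A^{0,2}(y))\bigr)^{q+1}\bigl(e^*(F_A^{0,2}(y))\bigr)^\mu P_{2\mu}
\]
for an explicit constant $C(q,\mu)>0$. Since $(F_A^{0,2})^{q+1}=0$ forces $\bigl(e(F_A^{0,2})\bigr)^{q+1}=0$, this leading coefficient of $\|\Pi_{2\mu}^{2q+2}\|_{HS}^2$ at order $k^{m-2q-2-2\mu}$ vanishes identically. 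The surviving contributions are all $O(k^{m-2q-3-2\mu})$: (i) weight $-(2l+1)$ corrections within the leading ordering, arising from the $O(|x-y|)$ remainder and from the subleading terms of Lemma \ref{lpowera}; (ii) orderings containing a single $H_h$-factor applied at $P=0$, whose $t$-factor squares to $k^{-1}$ at $t=k^{-1/2}$; and (iii) the $l=q+1+\mu+1$ term, bounded directly by Proposition \ref{propasy}.

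The principal technical obstacle is the rigorous reflection/$t$-factor bookkeeping that identifies the unique polynomial-leading ordering and hence isolates the coefficient forced to vanish by $(F_A^{0,2})^{q+1}=0$; the remaining estimates are then routine consequences of Proposition \ref{propasy} and Lemma \ref{lpowera}.
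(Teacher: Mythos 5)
Your proposal is correct and follows essentially the same route as the paper's proof: reduce to $\|P_{2q+2}Q_{k^{-1/2}}P_{2\mu}\|_{HS}$ via Proposition \ref{ptoq}, discard exponentially small and lower-weight contributions using Proposition \ref{propasy}, and observe that the only leading-order piece of the critical term is the composition $\bigl(-L^{-1}2e^{4kt}e(F_A^{0,2}(y))\bigr)^{q+1}\bigl(-L^{-1}2e^{-4kt}e^*(F_A^{0,2}(y))\bigr)^{\mu}$, which vanishes by Corollary \ref{corop1}, leaving only contributions of weight at most $-2q-3-2\mu$. Your explicit bookkeeping of the exponential factors (forcing all lowerings to precede all raisings) simply makes precise what the paper asserts when it singles out this term, and your placement of the critical term at $l=q+1+\mu$ is the indexing consistent with Proposition \ref{propasy}.
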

\begin{proof}
Using Proposition \ref{propasy} and our approximation $Q_{k^{-1/2}}$ for $\Pi$, we have
\begin{align*}
\|\Pi_{2\mu}^{2q+2}\| &= \|P_{2q+2}Q_{k^{-1/2}}P_{2\mu}\| +O(k^{\frac{k-N-1}{2}})\\
&\leq \sum_{l\geq 2q+2\mu+2} \|P_{2q+2}\psi\Prefix u_l P_{2\mu}\|+\underbrace{\sum_{l< 2q+2\mu+2} \|P_{2q+2}\psi\Prefix u_l P_{2\mu}\|}_{\text{Prop \ref{propasy} }\implies\text{ exp. decay}}+O(k^{\frac{k-N-1}{2}})\\
&=\|P_{2q+2}\psi\Prefix u_{2q+2\mu+2}P_{2\mu}\|+O(k^{m-2q-2\mu-3}),
\end{align*}
given $N$ sufficiently large.
The leading order term of the highest weight term in $u_{2q+2\mu+2}$ arises from 
\[\Bigl(-L^{-1}2e^{4kt} e(F_A^{0,2}(y))\Bigr)^{q+1}\Bigl(-L^{-1}2e^{-4kt}e^*(F_A^{0,2}(y))\Bigr)^{\mu},\] which vanishes by Corollary \ref{corop1}.  The remaining terms have weight less than or equal to $-2q-3-2\mu .$ 
\end{proof}

\begin{proposition}\label{SqV1}   
For $E\in S^q_{V,1}$  with $S^q_{V,1}$ compatible connection $A$, we have
\begin{gather*}
\|\Pi_0^{2q+2}\|_{HS}^2 =\frac{k^{m-2q-3}}{2^{2q+2-m}(4\pi)^m} \left\|\sum_{b=0}^{q}  \frac{2^{b+1}(2q-2b+1)!! (F_A^{0,2})^{b} (\nabla^{0,1} F_{A}^{0,2}) (F_A^{0,2})^{q-b}}{(2q+3)!!(q-b)!}\right\|_{L^2}^2 +O(k^{ m-2q-4}) .
\end{gather*}
\end{proposition}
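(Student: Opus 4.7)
The plan is to carefully track the Schwartz kernel of $P_{2q+2} Q_{k^{-1/2}} P_0$ using the inductive construction of $q_t$. By Proposition \ref{ptoq}, replacing $\Pi$ by $Q_{k^{-1/2}}$ introduces an error of Hilbert--Schmidt norm $O(k^{(m-N-1)/2})$, negligible once $N$ is chosen large. The contribution from each $u_l$ to $\|P_{2q+2}Q_{k^{-1/2}}P_0\|_{HS}^2$ is of order $k^{m-2l}$ by Proposition \ref{propasy}, so $l \geq q+2$ yields only the error $O(k^{m-2q-4})$, and $l \leq q$ gives zero by the degree count. Thus only $u_{q+1}$ contributes at the order $k^{m-2q-3}$ stated in the proposition.

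Next I would analyze $P_{2q+2} u_{q+1} P_0$. Since each $H$-factor in the inductive formula must raise form-degree by $2$ (to reach $P_{2q+2}$ from $P_0$ in exactly $q+1$ steps), each must be of the form $2e^{4kt}\psi^{-1}e(F_A^{0,2}(x))\psi$. Taylor-expanding at $y$ yields $\psi^{-1}e(F_A^{0,2}(x))\psi = e(F_A^{0,2}(y)) + (x-y)^\mu e(\nabla_\mu F_A^{0,2}(y)) + O(r^2)$. Using only the leading term in each of the $q+1$ factors produces the weight-$(-2q-2)$ piece, which by Lemma \ref{lpowera} equals $\tfrac{(-1)^{q+1} e^{4(q+1)kt}}{2^{q+1}k^{q+1}(q+1)!}(F_A^{0,2})^{q+1}$; this vanishes by Corollary \ref{corop1}. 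The weight-$(-2q-3)$ surviving terms therefore come from replacing exactly one of the $q+1$ factors by the subleading $(x-y)^\mu e(\nabla_\mu F_A^{0,2}(y))$ piece; higher Taylor corrections lie in weight $\le -2q-4$ and are absorbed into the error.

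Decompose $(x-y)^\mu \nabla_\mu = z^a\nabla_a + \bar z^a \nabla_{\bar a}$ in $J_0$-complex coordinates. Lemma \ref{lpowera} produces a denominator containing $2p+|K|$ when $\LI$ acts on $e^{4pkt}z^J\bar z^K$. For the $z^a$-contribution, $|K|=0$ throughout the subsequent applications of $\LI$, and a direct telescoping of the products $\bigl(\prod_{i=1}^j \tfrac{-1}{2k(q-j+1+i)}\bigr)\cdot\tfrac{-1}{2k(q-j+1)}$ with the initial factor from Lemma \ref{lpowera} shows that the coefficient at insertion-position $j$ collapses to $\tfrac{(-1)^{q+1}}{2^{q+1}k^{q+1}(q+1)!}$, independent of $j$. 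Summing over $j$ then gives $\sum_{j=0}^{q} F_A^{0,2,\wedge j}(\nabla_a F_A^{0,2})F_A^{0,2,\wedge(q-j)} = \nabla_a\bigl((F_A^{0,2})^{q+1}\bigr) = 0$ by Corollary \ref{corop1}, so the $(1,0)$-derivative part vanishes identically. The $\bar z^a$-contribution carries $|K|=1$ at every post-Taylor step; tracking the products of $-\tfrac{1}{k(2p+1)}$ factors yields the position-dependent coefficient $\tfrac{(-1)^{q+1}(2q-2b+1)!!}{2^{q-b}k^{q+1}(q-b)!(2q+3)!!}$ on $\bar z^a (F_A^{0,2})^{b}(\nabla_{\bar a}F_A^{0,2})(F_A^{0,2})^{q-b}$, which does not factor out and survives the summation.

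To finish, I would compute $\|P_{2q+2} u_{q+1} P_0\|_{HS}^2$ against the prefactor $\bigl(\tfrac{k}{4\pi\sinh(tk)}\bigr)^{2m}e^{-kr^2/\tanh(tk)}e^{2kt(m-4(q+1))}$ at $t = k^{-1/2}$. The $tk \gg 1$ asymptotics $\sinh(tk)\sim \tfrac12 e^{tk}$ cancel the $e^{8(q+1)kt}$ factors from $|e^{4(q+1)kt}|^2$, leaving the advertised $k^{m-2q-3}$ power; the Gaussian moment $\int z^a\bar z^{a'} e^{-kr^2}d^{2m}x = \delta^{aa'}\pi^m/k^{m+1}$ contracts the $\bar z^a\otimes z^{a'}$ indices into $\delta^{aa'}$, so the double sum over $a,b,b'$ assembles into the squared pointwise norm of $\sum_b (\textup{coeff})_b (F_A^{0,2})^{b}(\nabla^{0,1} F_A^{0,2})(F_A^{0,2})^{q-b}$ integrated against the Riemannian volume. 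The principal obstacle is the combinatorial bookkeeping of coefficients through the $q+1$ $\LI$-steps in the inductive construction; once the $j$-independence of the $z^a$-coefficient is established, the $(1,0)$-cancellation via Corollary \ref{corop1} is transparent, and matching the explicit constants in front to $\tfrac{1}{2^{2q+2-m}(4\pi)^m}$ is a computation using the conventions chosen for Hermitian norms on $(0,*)$-forms.
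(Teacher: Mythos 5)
Your proposal is correct and follows essentially the same route as the paper's proof: reduce to $P_{2q+2}u_{q+1}P_0$ via Propositions \ref{ptoq} and \ref{propasy}, kill the weight-$(-2q-2)$ term and the $z^\mu$-derivative sum using $(F_A^{0,2})^{q+1}=0$ (the latter being the derivative of that identity), and extract the position-dependent $\bar z^\mu$-coefficients from Lemma \ref{lpowera}, which agree exactly with the paper's $\frac{2^{b+1}(2q-2b+1)!!}{(2q+3)!!(q-b)!}$ before the final Gaussian integration. The only loose point is your discarding of all $u_l$ with $l\geq q+2$ by quoting the diagonal bounds of Proposition \ref{propasy}: the cross term between $u_{q+1}$ and $u_{q+2}$ is a priori of the same order as the leading term, and the paper handles it by first retaining $u_{q+2}$ and then invoking the weight improvement $u_{q+1}\in W_y^{-2q-3}$ that follows from the very vanishing $(F_A^{0,2})^{q+1}=0$ you already use.
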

\begin{proof}
We have seen in the proof of Proposition \ref{naya} that for $t= k^{-1/2}$, we have
\begin{align*}
\|\Pi_0^{2a}\|_{HS}^2& = \int_{M\times M}   (\frac{k}{4\pi\sinh(tk)})^{2m}U^2e^{2kt(m-4a)}\tr P_0\sum_{b=a}^N u_b^*(x,y)P_{2a}\sum_{l=a}^N u_l(x,y)P_0 dydx+O(k^{\frac{m-N-1}{2}})\\
&= \int_{M\times M}   (\frac{k}{4\pi\sinh(tk)})^{2m}U^2e^{2kt(m-4a)}\tr P_0 \Bigl(u_a^*(x,y)+u_{a+1}^*(x,y)\Bigr)P_{2a}\Bigl(u_a(x,y)+u_{a+1}(x,y)\Bigr)P_0 dydx\\
&\quad+O(k^{ m-2a-2}).	
\end{align*}
Setting $a= q+1$, Corollary \ref{corop1} implies the vanishing of $e(F_A^{0,2}(y))^{a}$. Consequently, from the discussion in the proof of Proposition \ref{naya}, we see that 
$u_a(x,y)\in W_y^{-2a-1}$.  This implies  
\begin{equation}\label{2ndcomp}\|\Pi_0^{2a}\|_{HS}^2  = \int_{M\times M}   (\frac{k}{4\pi\sinh(tk)})^{2m}U^2e^{2kt(m-4a)}\tr P_0 u_a^*(x,y)P_{2a}u_a(x,y) P_0 dydx+O(k^{ m-2a-2}).\end{equation}
We Taylor expand in the radial direction:
\begin{align*}
\psi^{-1}(x,y)e(F_A^{0,2}(x))\psi(x,y) = e(F_A^{0,2}(y))  
 + z^\mu e\bigl(\nabla_{\frac{\p}{\p z^\mu}}F_A^{0,2}(y)\bigr)+ \bar z^\mu e\bigl(\nabla_{\frac{\p}{\p \bar z^\mu}}F_A^{0,2}(y)\bigr) + O(|x-y|^2).	
\end{align*}
With the vanishing of $e(F_A^{0,2}(y))^{a}$, the leading order term in $P_{2a}u_aP_0$, as per Equation (\ref{uaexp}), becomes 
\begin{align*}
(-1)^a\sum_{b=0}^{a-1} \Bigl(L^{-1}2e^{4kt} e&\bigl(F_A^{0,2}(y)\bigr) \Bigr)^b\Bigl(L^{-1}2e^{4kt} \Bigl[z^\mu e(\nabla_{\frac{\p}{\p z^\mu}}F_A^{0,2}(y))
+ \bar z^\mu e(\nabla_{\frac{\p}{\p \bar z^\mu}}F_A^{0,2}(y))\Bigr] \Bigr) \Bigl(L^{-1}2e^{4kt} e\bigl(F_A^{0,2}(y)\bigr)\Bigr)^{a-b-1}\displaybreak[0]\\
&= (-1)^a\sum_{b=0}^{a-1}  (L^{-1}2e^{4kt} e(F_A^{0,2}(y)) )^bL^{-1}\frac{2e^{4(a-b)kt}\bar z^\mu e(\nabla_{\frac{\p}{\p \bar z^\mu}}F_A^{0,2}(y)) e(F_A^{0,2}(y))^{a-b-1}}{2^{a-b-1}k^{a-b-1}(a-b-1)!} \\
&\quad\quad\quad\quad+(-1)^a \sum_{b=0}^{a-1}\frac{ e^{4akt}e(F_A^{0,2}(y))^{b}z^\mu e(\nabla_{\frac{\p}{\p   z^\mu}}F_A^{0,2}(y))e(F_A^{0,2}(y))^{a-b-1}}{k^{a }2^aa!} \mod W_y^{-2a-2}\displaybreak[0]\\
&= (-1)^a\sum_{b=0}^{a-1}  \frac{ 2^{b+1 }(2a-2b-1)!!e^{4akt}e(F_A^{0,2}(y))^{b} \bar z^\mu e(\nabla_{\frac{\p}{\p \bar z^\mu}}F_A^{0,2}(y)) e(F_A^{0,2}(y))^{a-b-1}}{k^{a}(2a+1)!!2^{a }(a-b-1)!}\\
&\quad\quad\quad\quad+(-1)^a \sum_{b=0}^{a-1}\frac{ e^{4akt}e(F_A^{0,2}(y))^{b}z^\mu e(\nabla_{\frac{\p}{\p   z^\mu}}F_A^{0,2}(y))e(F_A^{0,2}(y))^{a-b-1}}{k^{a }2^aa!} \mod W_y^{-2a-2}. 	
\end{align*}
The coefficient of $z^\mu$ is a multiple of the $\frac{\p}{\p z^{\mu}}$ covariant derivative of  $0=e(F_A^{0,2})^{a}$, and therefore vanishes.  (The vanishing of the $\bar z$-linear term in the Taylor expansion of $0=e(F_A^{0,2}(y))^{a}$ can also be used to modify the coefficients of $\bar z^\mu$.) Hence
\begin{align*}
P_{2a}u_aP_0 &= (-1)^a\sum_{b=0}^{a-1}  \frac{ 2^{b+1 }(2a-2b-1)!!e^{4akt}e(F_A^{0,2}(y))^{b} \bar z^\mu e(\nabla_{\frac{\p}{\p \bar z^\mu}}F_A^{0,2}(y)) e(F_A^{0,2}(y))^{a-b-1}}{k^{a}(2a+1)!!2^{a }(a-b-1)!} \mod W_y^{-2a-2}. 
\end{align*}
Inserting this equality into Equation (\ref{2ndcomp}) gives 
\begin{align*}	
\|\Pi_0^{2a}\|_{HS}^2 
&= \int_{M\times M}   (\frac{k }{4\pi})^{2m}U^2 \left|\sum_{b=0}^{a-1}  \frac{ 2^{b+1 }(2a-2b-1)!!e(F_A^{0,2}(y))^{b} \bar z^\mu e(\nabla_{\frac{\p}{\p \bar z^\mu}}F_A^{0,2}(y)) e(F_A^{0,2}(y))^{a-b-1}}{k^{a}(2a+1)!!2^{a }(a-b-1)!}P_0\right|^2_{HS}dydx\\
&\quad+O(k^{ m-2a-2}) \\
&= \frac{k^{m-2a-1}}{2^{2a-m}(4\pi)^m} \int_{M} \sum_\mu \left|\sum_{b=0}^{a-1}  \frac{ 2^{b+1 }(2a-2b-1)!!e(F_A^{0,2}(y))^{b} e(\nabla_{\frac{\p}{\p \bar z^\mu}}F_A^{0,2}(y)) e(F_A^{0,2}(y))^{a-b-1}}{(2a+1)!!(a-b-1)!}P_0\right|^2_{HS}dy\\
&\quad+O(k^{ m-2a-2}) 
\end{align*}
To complete the proof, we set $a=q+1$.
\end{proof}

\begin{corollary}\label{peq2}$E\in S^q_{V,2}$ with $A$ an $S^q_{V,2}$ compatible connection if and only if 
\begin{align}
	0 &= (F_A^{0,2})^{q+1},\text{ and}\\		
 0 &= \sum_{b=0}^{q}  \frac{ 2^{b+1 } (2q-2b+1)!!(F_A^{0,2})^{b} (\nabla^{0,1}F_A^{0,2}) (F_A^{0,2})^{q-b }}{(2q+3)!!(q-b )!}.
\end{align}
\end{corollary}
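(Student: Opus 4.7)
The plan is to reduce the asymptotic condition defining $S^q_{V,2}$ to pointwise identities by peeling off the two leading terms of the expansion of $\|\Pi_0^{2q+2}\|_{HS}^2$ that have already been computed in the previous propositions.

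First I would observe that $\Tr P_{2q+2}\Pi = \Tr \Pi_{2q+2}^{2q+2}$ by cyclicity of the trace, and Proposition \ref{trths} (with $a = q+1$) gives $\Tr \Pi_{2q+2}^{2q+2} = \|\Pi_0^{2q+2}\|_{HS}^2 + O(k^{m-2q-4})$. Hence the defining condition $\Tr P_{2q+2}\Pi = O(k^{m-2q-4})$ for $S^q_{V,2}$-compatibility is equivalent to $\|\Pi_0^{2q+2}\|_{HS}^2 = O(k^{m-2q-4})$.

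For the forward direction, Proposition \ref{naya} with $a=q+1$ yields
\[\|\Pi_0^{2q+2}\|_{HS}^2 = \frac{k^{m-2q-2}}{2^{2q+2}(2\pi)^m((q+1)!)^2}\|(F_A^{0,2})^{q+1}\|_{L^2}^2 + O(k^{m-2q-3}).\]
Since the coefficient of $k^{m-2q-2}$ is a nonnegative $L^2$ norm, the hypothesis that the total is $O(k^{m-2q-4})$ forces $(F_A^{0,2})^{q+1}=0$ pointwise, which is the first claimed identity. In particular, $A$ is $S^q_{V,1}$-compatible by Corollary \ref{corop1}, so Proposition \ref{SqV1} applies and gives
\[\|\Pi_0^{2q+2}\|_{HS}^2 = \frac{k^{m-2q-3}}{2^{2q+2-m}(4\pi)^m}\left\|\sum_{b=0}^{q}\frac{2^{b+1}(2q-2b+1)!!(F_A^{0,2})^{b}(\nabla^{0,1}F_A^{0,2})(F_A^{0,2})^{q-b}}{(2q+3)!!(q-b)!}\right\|_{L^2}^2 + O(k^{m-2q-4}).\]
The same argument — dividing out the leading $k^{m-2q-3}$ and using nonnegativity of the $L^2$ norm — forces the indicated sum to vanish pointwise on $M$, establishing the second identity.

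The converse is obtained by running the implications backwards: assuming both identities, Corollary \ref{corop1} gives $E \in S^q_{V,1}$, Proposition \ref{SqV1} immediately yields $\|\Pi_0^{2q+2}\|_{HS}^2 = O(k^{m-2q-4})$, and Proposition \ref{trths} then delivers $\Tr P_{2q+2}\Pi = O(k^{m-2q-4})$, i.e.\ $A$ is $S^q_{V,2}$-compatible. Since Proposition \ref{SqV1} and its precursors have already performed the heavy computational work of identifying the $k^{m-2q-3}$ coefficient explicitly as an $L^2$ norm, no new obstacle arises here; the one subtle point is merely the passage from a decay rate in $k$ to the vanishing of a coefficient, which is automatic because each coefficient in question is a squared $L^2$ norm of a pointwise tensor.
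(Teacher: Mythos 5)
Your argument is correct and is exactly the route the paper intends: the corollary is stated as an immediate consequence of Proposition \ref{trths}, Proposition \ref{naya} (equivalently Corollary \ref{corop1}), and Proposition \ref{SqV1}, peeling off the $k^{m-2q-2}$ and $k^{m-2q-3}$ coefficients, each a squared $L^2$ norm, from $\|\Pi_0^{2q+2}\|_{HS}^2$ and noting that the off-diagonal contributions $\|\Pi_{2\mu}^{2q+2}\|_{HS}^2$, $\mu>0$, are already $O(k^{m-2q-4})$. No gaps; your handling of both directions matches the paper's (implicit) proof.
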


\section{Metric Variation}\label{metricvar}
In this section we begin investigating the additional constraints placed on an $S_{V,p}^q$ compatible connection $A$ by $IS_{V,p}^q$ compatibility. These new constraints arise by computing the metric variation of the constraints imposed by $S_{V,p}^q$ compatibility.

Let $g(t)$ be a smooth 1 parameter family of Kahler metrics. Suppose that 
$\dot g(0) = \eta $, for some hermitian 2 tensor $\eta$. Then at $t=0$, 
\begin{equation}\label{varchrist}
\dot\Gamma_{\bar a\bar b}^{\bar c} =  g^{\bar c  e}\eta_{\bar b  e;\bar a},
\end{equation}
Suppose henceforth that 
\[\eta_{a\bar c} = \frac{\p^2H}{\p z^a\p \bar z^c}.\]
This is the form of the metric variation when we vary the line bundle metric $h$ in the polarizing data $(L,h)$. 
At the origin of a K\"ahler normal coordinate system we now have 
\begin{equation}\label{varchristn}
\dot\Gamma_{\bar a\bar b}^{\bar c} =  H_{,\bar a\bar b c}.
\end{equation}
\begin{proposition}\label{isv12}Let $A$ be an $IS^1_{V,2}$ compatible connection and $dim_{\IC}M>3$. Then for every vector field $Z$,
\[ F^{0,2}\wedge i_{Z} F^{0,2}  =0. \]
\end{proposition}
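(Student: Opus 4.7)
Set $\Phi := F^{0,2}_A$ and, for each index $\bar e$, $\Phi_{\bar e} := i_{\partial/\partial\bar z^e}\Phi$. By hypothesis $A$ is $S^1_{V,2,L,h}$-compatible for every polarization, so Corollary \ref{peq2} with $q=1$ gives, at every polarization,
\[\Phi^{2}=0 \qquad\text{and}\qquad 3\,(\nabla_{\bar\mu}\Phi)\wedge\Phi + 2\,\Phi\wedge(\nabla_{\bar\mu}\Phi) = 0\]
for every $\bar\mu$. My strategy is to differentiate the second identity in the polarization, specialize the variation to isolate a pointwise algebraic constraint on $\Phi$ alone, and then combine the result with $\Phi^{2}=0$.

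\textbf{Metric variation.}
Vary $h$ by $h_t = h\,e^{tH}$ for a real function $H$. Since $\Phi$ depends only on $A$ and the complex structure, $\dot\Phi=0$. In K\"ahler normal coordinates centred at a point $y$, formula (\ref{varchristn}) gives $\dot\Gamma^{\bar c}_{\bar\mu\bar a} = H_{,\bar\mu\bar a c}$, hence
\[\dot{(\nabla_{\bar\mu}\Phi)}_{\bar a\bar b}
  = -H_{,\bar\mu\bar a c}\,\Phi_{\bar c\bar b} - H_{,\bar\mu\bar b c}\,\Phi_{\bar a\bar c},\]
where $c,\bar c$ are identified via $g^{c\bar c}=\delta^{c\bar c}$ at $y$. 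I will choose $H$ whose third derivatives at $y$ factor as $H_{,\bar\mu\bar a c} = \delta^c_{\,e}\, t_{\bar\mu\bar a}$ for a prescribed symmetric tensor $t$ and a fixed index $e$; this only constrains third-order Taylor coefficients of a real function, so such $H$ exists. Writing $\tau_{\bar\mu} := t_{\bar\mu\bar a}\,d\bar z^{\bar a}$ and using antisymmetry of $\Phi$ to collapse the two terms, the variation becomes
\[\dot{(\nabla_{\bar\mu}\Phi)} = -\,\tau_{\bar\mu}\wedge\Phi_{\bar e}\]
as a $(0,2)$-form.

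\textbf{Conclusion.}
Differentiating the Corollary \ref{peq2} identity at $t=0$, substituting the above, and pulling the scalar $(0,1)$-form $\tau_{\bar\mu}$ through $\Phi$ (sign $(-1)^{1\cdot 2}=1$) produces
\[\tau_{\bar\mu}\wedge\bigl[\,3\,\Phi_{\bar e}\wedge\Phi + 2\,\Phi\wedge\Phi_{\bar e}\,\bigr]=0.\]
As $t$ varies, $\tau_{\bar\mu}$ ranges over all $(0,1)$-forms, so the hypothesis $\dim_{\IC}M>3$ (which ensures that $d\bar z^{\bar c}\wedge X = 0$ for all $\bar c$ forces a $(0,3)$-form $X$ to vanish) implies
\[3\,\Phi_{\bar e}\wedge\Phi + 2\,\Phi\wedge\Phi_{\bar e}=0\]
for each $\bar e$. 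Applying the graded Leibniz rule to $0=i_{\partial/\partial\bar z^e}(\Phi\wedge\Phi)$ yields $\Phi_{\bar e}\wedge\Phi = -\,\Phi\wedge\Phi_{\bar e}$; substituting gives $-\,\Phi\wedge\Phi_{\bar e}=0$, i.e.\ $F^{0,2}\wedge i_{\partial/\partial\bar z^e}F^{0,2}=0$ for every $\bar e$. Finally, since $i_Z F^{0,2}$ only sees the $(0,1)$-part of $Z$, $\IC$-linearity extends this to every vector field $Z$. The main point to get right is that the asymmetric ratio $3:2$ inherited from Corollary \ref{peq2} is precisely what combines nontrivially with the Leibniz consequence of $\Phi^{2}=0$; a symmetric ratio would have collapsed the combination to $0=0$.
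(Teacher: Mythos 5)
Your argument is correct and is essentially the paper's own proof: both rest on Corollary \ref{peq2} at $q=1$, the variation formula (\ref{varchristn}) for the Levi--Civita Christoffels under a change of polarization, the freedom to prescribe the third derivatives of $H$ at a point, and the hypothesis $\dim_{\IC}M>3$. The only (immaterial) difference is the order of operations: the paper first combines the $3{:}2$ identity with the derivative of $(F_A^{0,2})^2=0$ to obtain $F_A^{0,2}\wedge\nabla^{0,1}F_A^{0,2}=0$ and then varies that equation, whereas you vary the $3{:}2$ identity first and invoke $i_Z\bigl(F_A^{0,2}\wedge F_A^{0,2}\bigr)=0$ afterwards.
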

\begin{proof}
Corollary \ref{corop1} specialized to $q=1$ gives \[F^{0,2}\wedge F^{0,2} = 0.\] The derivative of this equality combined with the second equation of Corollary \ref{peq2} gives,
\begin{equation}\label{1deriv}0 =    F_A^{0,2}\wedge \nabla^{0,1}F_A^{0,2},\end{equation}
for $A$ an $S^1_{V,2}$ compatible connection. 
Equation (\ref{1deriv}) couples the connection to the metric via the Levi--Civita action of $\nabla_{\bar Z}$. 
As $A$ is $IS^1_{V,2}$ compatible, the equation holds for all polarizations, and we may differentiate it to obtain
\begin{equation}\label{varq2}
- H_{,\bar a\bar b  c}d\bar z^b\wedge F^{0,2}\wedge F^{0,2}_{\bar c\bar f}d\bar z^f = 0.\end{equation}
The $1$-form $- H_{,\bar a\bar b  c}d\bar z^b$ can take any $(0,1)$ value at a point. Hence, we deduce that for every vector field $Z$, 
\begin{equation}
 F^{0,2}\wedge i_{Z} F^{0,2}  =0. 
\end{equation}
\end{proof}

\begin{proposition}\label{is2com}
If $E\in IS^1_{V,2}$ with compatible connection $A$ and $dim_{\IC}M>3$, then $F_A^{0,2}$ is a $2$-form taking values in a commutative subalgebra of $\ad(E)$. 
\end{proposition}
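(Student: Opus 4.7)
The plan is to translate the conclusion of Proposition \ref{isv12}, $F_A^{0,2}\wedge\iota_Z F_A^{0,2}=0$ for every vector field $Z$, into a family of component identities and then do a short linear algebra computation. Writing $F_A^{0,2}=\frac{1}{2} F_{\bar a\bar b}\,d\bar z^a\wedge d\bar z^b$ with components $F_{\bar a\bar b}=-F_{\bar b\bar a}\in\ad(E)$, the commutativity claim amounts to showing $[F_{\bar i\bar j},F_{\bar k\bar l}]=0$ for all indices $i,j,k,l\in\{1,\dots,m\}$.

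Taking $Z=\partial/\partial\bar z^e$ and reading off the coefficient of $d\bar z^a\wedge d\bar z^b\wedge d\bar z^c$ produces the identity
\[
T_{abc;e}\;:=\;F_{\bar a\bar b}F_{\bar e\bar c}-F_{\bar a\bar c}F_{\bar e\bar b}+F_{\bar b\bar c}F_{\bar e\bar a}\;=\;0,
\]
valid for all $a,b,c,e$. When $e$ coincides with one of $a,b,c$—say $e=a$—the vanishing $F_{\bar a\bar a}=0$ collapses $T_{abc;a}=0$ to $[F_{\bar a\bar b},F_{\bar a\bar c}]=0$. Hence any two components sharing at least one index commute automatically.

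For the remaining pairs, fix four distinct indices $a,b,c,d$, which is possible because $\dim_\IC M\geq 4$. Abbreviate $P_1=F_{\bar a\bar b}F_{\bar c\bar d}$, $P_2=F_{\bar c\bar d}F_{\bar a\bar b}$, $P_3=F_{\bar a\bar c}F_{\bar b\bar d}$, $P_4=F_{\bar b\bar d}F_{\bar a\bar c}$, $P_5=F_{\bar a\bar d}F_{\bar b\bar c}$, $P_6=F_{\bar b\bar c}F_{\bar a\bar d}$. The four identities $T_{bcd;a}=T_{acd;b}=T_{abd;c}=T_{abc;d}=0$ are four linear relations among $P_1,\dots,P_6$. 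Adding $T_{bcd;a}$ to $T_{abc;d}$ yields $P_3-P_4=P_1-P_2$, while adding $T_{acd;b}$ to $T_{abd;c}$ yields $P_3-P_4=P_2-P_1$. Taking their sum and difference forces $P_1=P_2$ and $P_3=P_4$; substituting back into $T_{bcd;a}=0$ then gives $P_5=P_6$. This establishes the three remaining commutators $[F_{\bar a\bar b},F_{\bar c\bar d}]=[F_{\bar a\bar c},F_{\bar b\bar d}]=[F_{\bar a\bar d},F_{\bar b\bar c}]=0$.

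The only obstacle is the bookkeeping in the third step: one must pair the four $T$-equations in precisely the right way so that the three disjoint-index commutators $P_1-P_2$, $P_3-P_4$, $P_5-P_6$ decouple and can each be shown to vanish. The identity $F^{0,2}\wedge F^{0,2}=0$ from Corollary \ref{corop1} was already exploited inside the proof of Proposition \ref{isv12} and need not be reinvoked here, and the dimension hypothesis $\dim_\IC M>3$ enters precisely to guarantee the existence of four distinct indices over which the decisive linear combinations can be formed.
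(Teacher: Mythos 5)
Your argument is correct and is essentially the paper's: both expand the conclusion of Proposition \ref{isv12} into the component identity (your $T_{abc;e}=0$ is the paper's Equation (\ref{p1exp})), specialize the contraction index to get $[F^{0,2}_{\bar a\bar b},F^{0,2}_{\bar a\bar c}]=0$, and then dispose of the disjoint-index commutators by elementary algebra --- the paper by polarizing that shared-index identity (replacing $a$ by $a+f$) and using the antisymmetry under the simultaneous exchange $a\leftrightarrow b$, $c\leftrightarrow f$, you by solving the linear system $T_{bcd;a}=T_{acd;b}=T_{abd;c}=T_{abc;d}=0$ in the six products $P_1,\dots,P_6$. One cosmetic slip: to conclude $P_5=P_6$ you must substitute $P_1=P_2$, $P_3=P_4$ into two of the four relations (e.g.\ $T_{bcd;a}=0$ and $T_{abd;c}=0$), not into $T_{bcd;a}=0$ alone, but this is immediate.
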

\begin{proof}
Let $A$ be an $IS_{V,2}^1$ compatible connection. Proposition \ref{isv12} gives  
\begin{equation}d\bar z^c\wedge F^{0,2}_{ \bar a\bar c}\wedge F_A^{0,2}  = 0,\,\,\forall a.\end{equation}
Expanding this equation in coordinates gives 

\begin{equation}\label{p1exp}
 F^{0,2}_{ \bar a\bar c}  F_{\bar b\bar f}^{0,2}
+  F^{0,2}_{ \bar a\bar b}  F_{\bar f\bar c}^{0,2}
+ F^{0,2}_{ \bar a\bar f}  F_{\bar c\bar b}^{0,2} = 0,\,\,\forall a,b,c,f.\end{equation}
When $a=f$ this reduces to 
\begin{equation}
[ F^{0,2}_{ \bar a\bar b} , F_{\bar a\bar c}^{0,2}] = 0,\,\,\forall a,b,c.\end{equation}
A change of coordinates (replacing $a$ by $a+f$) then implies 
\begin{equation}
[ F^{0,2}_{ \bar a\bar b} , F_{\bar c\bar f}^{0,2}]=[ F^{0,2}_{ \bar f\bar b} , F_{\bar a\bar c}^{0,2}] ,\,\,\forall a,b,c,f.\end{equation}
The left hand side of this equality is invariant under the simultaneous exchanges $a\leftrightarrow b$ and $c\leftrightarrow f$, but the right hand side is multiplied by $-1$. Hence 
\begin{equation}
[ F^{0,2}_{ \bar a\bar b} , F_{\bar c\bar f}^{0,2}]=0 ,\,\,\forall a,b,c,f.\end{equation}
\end{proof}

\section{Further asymptotics}
Although we will not do so here, we note that probing the restrictions on $S^q_{V,p}$ compatible connections is sometimes simplified by using the identity 
$\|\Pi_{2a}^{0}\|_{HS}^2 = \|\Pi_{0}^{2a}\|_{HS}^2$ to shift our computations to $\Pi_{2a}^0$ where certain simplifications arise. To see these simplifications, first consider the model computation
\begin{multline*}
L^{-1} z^J \bar z^Ke^{-4ptk} \\
=   \frac{k^{-1}e^{tk(|J|-|K|)}}{\sinh(tk)^{(|J|+|K|)}}\int_0^{tk} \int_{\IR^n}e^{-\pi|y|^2}\sinh(s)^{(|J|+|K|)}e^{s(|K|-|J|-4p)}(\sqrt{\frac{4\pi}{k}(tk-s)}y+z)^J (\sqrt{ \frac{4\pi}{k} (tk-s)}\bar y+ \bar z)^Kdyds.
\end{multline*}
In particular, for $p>0$, $L^{-1} z^J \bar z^Ke^{-4ptk}$ is exponentially decreasing if $|K|\not = 0$. 
Hence, in estimating 
\begin{align*}
  \|\Pi_{2a}^0\|_{HS}^2 &=    \|P_{0}Q_{k^{-1/2}}P_{2a}\|_{HS}^2 +O(k^{\frac{m-N-1}{2}})\\
&= \int_{M\times M}   (\frac{k}{4\pi\sinh(tk)})^{n}U^2e^{2kt(m-4a)}\tr P_{2a}\sum_{b=a}^N u_b^*(x,y)P_{0}\sum_{l=a}^N u_l(x,y)P_{2a} dydx+O(k^{\frac{m-N-1}{2}}),
\end{align*}
we may discard terms in $u_f= (-L^{-1}H)^fI$ arising from exponentially decreasing terms in $H(-L^{-1}H)^{f-1}I$ with $\bar z^K$ factors, $|K|>0.$ Because $L^{-1}$ has terms lowering the degree of a polynomial, we cannot simply remove any term with a $\bar z^b$ factor. Nonetheless, this suggests we analyze the polynomials arising in $H$. 

\subsection{The fine structure of $H$}\label{fineH}
It is convenient to say a monomial differential operator $z^A\bar z^B\frac{\p^{|\alpha|+|\beta|}}{\p z^\alpha\p\bar z^\beta}$ is of type $(|A|-|\alpha|,|B|-|\beta|)$ and to define the \emph{charge} of a monomial differential operator of type $(p,q)$ to be $p-q$. 
We identify the terms in $H$ which raise or lower homogeneity in $\bar z$. First we determine the $O(r^3)$ terms arising in $\Delta r^2  +4m - \frac{2}{3}Ric(r\frac{\p}{\p r},r\frac{\p}{\p r}).$
  Let $\{e_j\}_{j=1}^n$ be an orthonormal tangent frame parallel along radial geodesics emanating from $y$, with $(\nabla e_j)(y) = 0.$ We choose the frame so that $e_j - \frac{\p}{\p x^j} = O(r^2\nabla)$, and therefore 
$e_jr^2 = 2(x^j-y^j) + O(r^3)$.

Then 
\begin{align*}
r\frac{\p}{\p r} (4m+\Delta r^2) &= 2\Delta r^2 -[r\frac{\p}{\p r},e_je_j - \nabla_{e_j}e_j]r^2
\\
&= 2\Delta r^2 +\Bigl(\bigl(e_j+\Phi(e_j)\bigr)e_j+e_j\bigl(e_j+\Phi(e_j)\bigr) +\nabla_{r\frac{\p}{\p r}}\nabla_{e_j}e_j  -\nabla_{e_j}e_j - \Phi(\nabla_{e_j}e_j)\Bigr)r^2
\\
&= \bigl(\Phi(e_j)e_j+e_j\Phi(e_j) +R(r\frac{\p}{\p r},e_j)e_j - \nabla_{ \Phi(e_j)}e_j  - \Phi(\nabla_{e_j}e_j)\bigr)r^2
\\
&=   \frac{4 }{3}Ric(y) ( r\frac{\p}{\p r},r\frac{\p}{\p r})-  \frac{r }{2}(\nabla_{\frac{\p}{\p r} }Ric)(r\frac{\p}{\p r},r\frac{\p}{\p r}) + O(r^4).
\end{align*}
Hence 
\begin{equation}\label{4mDeltar2}
	(4m+\Delta r^2)=    \frac{2}{3}Ric(y)(r \frac{\p}{\p r} ,r\frac{\p}{\p r} ) 
+ \frac{r}{6}(\nabla_{\frac{\p}{\p r} }Ric)(r\frac{\p}{\p r} ,r\frac{\p}{\p r})  + O(r^4).\end{equation}

We now examine $H$. We have
\begin{align*}
H&=H_h+ 2e^{4kt}\psi^{-1}e(F_A^{0,2})\psi + 2e^{-4kt}\psi^{-1}e^*(F_A^{0,2})\psi,
\text{ and}\\
H_h &=   \Delta-\Delta_E+  ikr(J-J_0)\frac{\p}{\p r}
  -  \frac{k(4m+\Delta (r^2))}{4\tanh(tk)}  
- 2g^{ij}( \psi_L^{-1}\delta\psi_{L;i} + \hat \psi^{-1}\hat \psi_{;i})\frac{\p}{\p x^j}+ \nabla^*\psi^{-1}\nabla\psi\\
&\quad-g^{ij}(\psi_L^{-1}\delta\psi_{L;i} + \hat \psi^{-1}\hat \psi_{;i})(\psi_L^{-1}\delta\psi_{L;j} + \hat \psi^{-1}\hat \psi_{;j})
+ik\bigl(\psi_L^{-1}\delta\psi_{L;r J\frac{\p}{\p r}} + \hat \psi^{-1}\hat \psi_{;r J\frac{\p}{\p r}}\bigr)+\psi^{-1}\mathcal{\hat F}\psi.
\end{align*}
A  relatively straightforward but lengthy computation involving numerous identities given so far allows one to get
\begin{align*}
H_h&=-\frac83R(y)(\pp{ z^i},\bar{z}^a\pp{\bar z^a},z^b\pp{z^b},\pp{\bar z^j})\frac{\p^2}{\p{\bar z^i}\p z^j} 
  -\frac43R(y)(\pp{z^i},\bar z^a\pp{\bar z^a},\bar z^b\pp{\bar z^b},\pp{z^j})\frac{\p^2}{\p \bar z^i\p \bar  z^j} 
\\
&\quad  -\frac43R(y)(\pp{\bar z^i},z^a\pp{z^a},z^b\pp{z^b},\pp{\bar z^j})\frac{\p^2}{\p  z^i\p { z^j}}  
+\frac43 Ric(\bar z^a\pp{\bar z^a},\pp{z^j})\pp{\bar z^j}+\frac43 Ric(z^a\pp{z^a},\pp{\bar z^j})\pp{ z^j}\\ 
&\quad-\frac{2k}{3} R(y)(z^b\frac{\p}{\p z^b},\frac{\p}{\p \bar z^\mu},z^l\frac{\p}{\p z^l},\bar z^c\frac{\p}{\p \bar z^c}) \frac{\p}{\p z^\mu} 
+ \frac{2k}{3} R(y)(\bar z^b\frac{\p}{\p \bar z^b},\frac{\p}{\p z^\mu},\bar z^l\frac{\p}{\p \bar z^l},z^c\frac{\p}{\p  z^c} ) \frac{\p}{\p \bar z^\mu}
\\
&\quad +\frac{kz^a\bar z^b}{3}R(y)(\pp{z^j},\bar z^c\pp{\bar z^c},\pp{z^a},\pp{\bar z^b})\pp{\bar  z^j}
+\frac{kz^a\bar z^b}{3}R(y)(\pp{\bar z^j},z^c\pp{ z^c},\pp{z^a},\pp{\bar z^b})\pp{z^j}\\ 
&\quad -2\Bigl(F^E(y)(r\pp r,\pp{\bar z^j})+z^cR(y)(\pp{z^c},\pp{\bar z^j})\Bigr)\pp{z^j} 
-2\Bigl(F^E(y)(r\pp r,\pp{ z^j})+\bar z^cR(y)(\pp{\bar z^c},\pp{ z^j})\Bigr)\pp{\bar z^j}\\  
&\quad
+kz^a\bar z^b\Bigl(F^E(y)( \frac{\p}{\p z^a}, \frac{\p}{\p \bar z^b})+ R(y)( \frac{\p}{\p z^a}, \frac{\p}{\p \bar z^b})\Bigr)
+\frac{k^2z^az^c\bar z^b\bar z^e}6R(y)(\frac{\p}{\p z^c},\frac{\p}{\p \bar z^e}, \frac{\p}{\p z^a}, \frac{\p}{\p \bar z^b}) 
+\psi^{-1}\mathcal{\hat F}\psi 
\\
&\quad -\frac{k\bar{z}^az^b Ric(y)(\pp{\bar z^a} ,\pp{z^b})}{3\tanh(tk)} 
+\delta H_h, 
\text{ with}\\
\delta H_h& =-\frac{k}{3} (\nabla_{r\frac{\p}{\p r} }R)(z^b\frac{\p}{\p z^b}, \frac{\p}{\p \bar z^\mu}, z^l\frac{\p}{\p z^l},\bar z^c\frac{\p}{\p \bar z^c}) \frac{\p}{\p z^\mu}
+ \frac{k}{3} (\nabla_{r\frac{\p}{\p r} }R)(\bar z^b\frac{\p}{\p \bar z^b}, \frac{\p}{\p z^\mu}, \bar z^l\frac{\p}{\p \bar z^l},z^c\frac{\p}{\p  z^c} ) \frac{\p}{\p \bar z^\mu}\displaybreak[0]\\ 
&\quad +\frac{kz^a\bar z^b}5 (\nabla_{r\pp{r}}R)(\pp{z^j},\bar z^c\pp{\bar z^c},\pp{z^a},\pp{\bar z^b})\pp{\bar  z^j}
      +\frac{kz^a\bar z^b}5 (\nabla_{r\pp{r}}R)(\pp{\bar z^j},z^c\pp{ z^c},\pp{z^a},\pp{\bar z^b})\pp{z^j}\displaybreak[0]\\  
&\quad -\frac{2k z^a\bar{z}^b}{40}(\nabla_{e_j}R)(e_j,r\frac{\p}{\p r},\pp{z^a},\pp{\bar z^b}) 
  - \frac{1}{3}  d_A^* F^E(r \frac{\p}{\p r})  - \frac13 d^*_{\nabla^{LC}}R(r\frac{\p}{\p r}) 
-\frac{k\bar{z}^az^br(\nabla_{\frac{\p}{\p r} }Ric)(\pp{\bar z^a} ,\pp{z^b}) }{12\tanh(tk)}\displaybreak[0]\\ 
&\quad+\frac{2}{3} kz^a\bar z^b \Bigl((\nabla_{r\frac{\p}{\p r} }F^E)(\frac{\p}{\p z^a}, \frac{\p}{\p \bar z^b})+ (\nabla_{r\frac{\p}{\p r} }R)(\frac{\p}{\p z^a}, \frac{\p}{\p \bar z^b})\Bigr)
+\frac{k^2z^az^c\bar z^b\bar z^e}{10}(\nabla_{r\frac{\p}{\p r} }R)(\frac{\p}{\p z^c},\frac{\p}{\p \bar z^e}, \frac{\p}{\p z^a}, \frac{\p}{\p \bar z^b})\displaybreak[0]\\
&\quad+O(r^2\nabla 
+r^3\nabla^2) 
+ O\Bigl(r^2+kr^4 +kr^5\nabla +\frac{kr^4}{\tanh(tk)}\Bigr). 
\end{align*}
 
This decomposition reflects the fact that $\delta H_h\in W^{-1}_y$.  
Except for the term
$-2F^E(y)(\bar{z}^a\pp{\bar{z}^a},\pp{\bar{z}^j})\pp{z^j}$ which is of type $(-1,1)$ (and thus of charge $-2$), and the term $-2F^E(y)(z^a\pp{z^a},\pp{z^j})\pp{\bar{z}^j}$ which is of type $(1,-1)$ (and thus of charge $2$), the rest of  $H$ (modulo terms of weight $-1$) consists of terms of charge $0$ and has no terms of type $(p,q)$ with $p<0$ or $q<0$.

\begin{proposition}\label{peq3}
If $E\in IS^1_{V,3}$ with $A$ an $IS^1_{V,3}$ compatible connection and $dim_{\IC}M>3$, then   
for all vector fields $Z$, 
\begin{equation}\label{peq31}0 =  F_{A;a}^{0,2} \wedge i_ZF^{0,2}, \end{equation}
and 
\begin{equation}\label{peq32}0 =   F_{A}^{0,2}\wedge   F^{1,1}_{A} \wedge   F_{A}^{0,2} .\end{equation}
\end{proposition}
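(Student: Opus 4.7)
The proof mirrors the structure of Proposition \ref{isv12} but one step further out in the asymptotic expansion. First I would extract the ``static'' $S^1_{V,3}$ constraint by pushing the computation of $\|\Pi_0^{4}\|_{HS}^2$ to order $k^{m-6}$, and then convert it to a polarization-independent statement by applying the metric-variation machinery of Section \ref{metricvar}.

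\textbf{Step 1: the $S^1_{V,3}$ constraint.}  Under the identities $F_A^{0,2}\wedge F_A^{0,2}=0$ (coming from $S^1_{V,1}$) and the second equation of Corollary \ref{peq2} (coming from $S^1_{V,2}$), the coefficients of $k^{m-4}$ and $k^{m-5}$ in $\|\Pi_0^4\|_{HS}^2$ vanish, and by Corollary \ref{corop2} the cross pieces $\|\Pi_{2\mu}^4\|_{HS}^2$ with $\mu>0$ are already $O(k^{m-7})$. The $k^{m-6}$ coefficient of $\|\Pi_0^4\|_{HS}^2$ is therefore the next obstruction, and must itself vanish under $S^1_{V,3}$ compatibility. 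I would extract this coefficient by pushing the computation in the proof of Proposition \ref{SqV1} one more order: Taylor expand $\psi^{-1}e(F_A^{0,2}(x))\psi$ to second order in $(x-y)$ inside $P_4 u_2 P_0 \supset (-L^{-1}2e^{4kt}\psi^{-1}e(F_A^{0,2})\psi)^2 I$, and include the contribution of $P_4 u_3 P_0$ in which exactly one of the three applications of $-L^{-1}H$ uses an $H_h$-piece rather than the wedge-by-$F_A^{0,2}$ piece.  From the decomposition of $H_h$ recorded in Section \ref{fineH}, once the already-proved identities (including Propositions \ref{isv12} and \ref{is2com}) are applied, only two structural types survive at this order: the charge-$(-2)$ piece $-2F_A^{1,1}(\bar z^a\partial_{\bar z^a},\partial_{\bar z^j})\partial_{z^j}$, contributing a coupling $F_A^{0,2}\wedge F_A^{1,1}\wedge F_A^{0,2}$, and the second-order Taylor coefficient of $F_A^{0,2}(x)$, contributing $\nabla^{0,1}\nabla^{0,1}F_A^{0,2}$ wedged with $F_A^{0,2}$. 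The vanishing of the $k^{m-6}$ coefficient thus translates into a pointwise identity of the schematic form
\[0 = \mathcal{E}\bigl(\nabla^{0,1}\nabla^{0,1}F_A^{0,2},\ F_A^{1,1}\wedge F_A^{0,2},\ \nabla^{0,1}F_A^{0,2}\bigr),\]
a specific linear combination of these tensors wedged with appropriate powers of $F_A^{0,2}$.

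\textbf{Step 2: metric variation.}  Apply the framework of Section \ref{metricvar}. Let $(L,h_t)$ be a $1$-parameter family of polarizations with first variation given by a K\"ahler potential $H$, so that at the origin of a K\"ahler normal coordinate system $\dot\Gamma^{\bar c}_{\bar a\bar b}=H_{,\bar a\bar b c}$ by (\ref{varchristn}). Since $A$ is $IS^1_{V,3}$ compatible, the constraint $\mathcal{E}=0$ holds for every $t$, hence its derivative at $t=0$ vanishes as well. Each $\nabla^{0,1}$ in $\mathcal{E}$ contributes a variation of the form $-H_{,\bar a\bar b c}F^{0,2}_{\bar b\bar\cdot}$ when it differentiates $F_A^{0,2}$ through a Christoffel symbol, while $F_A^{0,2}$ itself is insensitive to the metric (only to the complex structure, which is fixed). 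Since $H$ is an arbitrary smooth function, its third derivatives $H_{,\bar a\bar b c}$ at a point can take arbitrary values of the correct symmetry type, so the coefficients of each independent component of $H_{,\bar a\bar b c}$ in the differentiated identity must vanish separately. Two algebraically independent consequences then emerge: varying a $\nabla^{0,1}$ acting on one factor of $F_A^{0,2}$ in the derivative piece of $\mathcal E$ produces $F_{A;a}^{0,2}\wedge i_Z F_A^{0,2}=0$, giving (\ref{peq31}); varying a Christoffel appearing in the $F_A^{1,1}$-coupling term frees up a holomorphic index through $H_{,\bar a\bar b c}$ and yields $F_A^{0,2}\wedge F_A^{1,1}\wedge F_A^{0,2}=0$, giving (\ref{peq32}). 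The hypothesis $\dim_{\IC}M>3$ guarantees that the relevant antiholomorphic form-degrees stay below $m$, so neither identity is forced to zero by degree alone.

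\textbf{Main obstacle.}  The principal difficulty is Step 1: computing the $k^{m-6}$ coefficient requires tracking a large number of subleading contributions inside $u_2$ and $u_3$ and then collapsing them using $F_A^{0,2}\wedge F_A^{0,2}=0$, the Corollary \ref{peq2} identity, Propositions \ref{isv12} and \ref{is2com}, and Bianchi-type identities for $F^E$ and the Riemann tensor. As in the proof of Proposition \ref{SqV1}, the crucial simplification is that the $(1,0)$-direction ``$z^\mu$'' Taylor terms vanish by differentiating $(F_A^{0,2})^2=0$, so that only $(0,1)$-direction and pure curvature terms survive; isolating the specific combination $\mathcal{E}$ whose metric variation cleanly yields (\ref{peq31}) and (\ref{peq32}) is the real work. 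The metric variation in Step 2 is then a routine extension of the argument in Proposition \ref{isv12}.
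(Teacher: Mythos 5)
Your skeleton (extract the order-$k^{m-6}$ constraint forced by $S^1_{V,3}$, then differentiate it in the polarization) is the paper's skeleton, but the two places where you wave at the details are exactly where your concrete claims go wrong, so there are genuine gaps. In Step 1, the assertion that ``only two structural types survive'' is not correct. The paper isolates the constraint by decomposing the weight $-6$ part of $P_4\sum_l u_l P_0$ by \emph{charge} (monomials of different charge are orthogonal in $L_2(e^{-|z|^2/2})$, so each charge sector must vanish separately), and Proposition \ref{peq3} is extracted from the charge-$0$ sector alone. After the $S^1_{V,2}$ identities and their first derivatives are used, the surviving charge-$0$ terms are products of first derivatives $e(F^{0,2}_{A;a})e(F^{0,2}_{A;\bar a})$, the mixed second derivative $e(F^{0,2}_A)e(F^{0,2}_{A;\bar a a})$, and algebraic curvature contractions such as $e(F^{0,2}_A)(F^E_{\bar a a}+R_{\bar a a})e(F^{0,2}_A)$ and $e(F^{0,2}_A)e(d\bar z^b)e^*(d\bar z^a)(F^E_{\bar b a}+R_{\bar b a})e(F^{0,2}_A)$. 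The $F^{1,1}$ coupling enters through the charge-$0$ terms $\psi^{-1}\mathcal{\hat F}\psi$ and $kz^a\bar z^b[F^E_{a\bar b}+R_{a\bar b}]$ of $H_h$ sandwiched inside $u_3$, and through the contractions produced by $L^{-1}$ acting on $z^a\bar z^b$-monomials in $u_2$ --- not through the charge $-2$ piece $-2F^E(\bar z^a\pp{\bar z^a},\pp{\bar z^j})\pp{z^j}$, which can only feed the charge $-2$ sector; and no $\nabla^{0,1}\nabla^{0,1}F^{0,2}_A$ term occurs in the relevant sector (such terms belong to the charge $-2$ computation used for the proposition that follows \ref{peq3}). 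The Riemann-curvature contractions are then killed using the $IS^1_{V,2}$ consequences (Propositions \ref{isv12} and \ref{is2com}), leaving the specific combination $\check\nu_0$ with coefficients $8$, $6$, $-9$. Those coefficients are not decoration: the whole content of (\ref{peq32}) is that a particular numerical combination of these terms survives, so deferring the bookkeeping defers the proof.

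In Step 2 the mechanism you propose for (\ref{peq32}) would fail. The $F^{1,1}$-coupling terms in the constraint are algebraic in curvature and contain no Christoffel symbols; the third derivatives $H_{,\bar a\bar b c}$ enter the variation only through the Christoffels in $F^{0,2}_{A;\bar b}$, and each such term carries an extra arbitrary $(0,1)$-form $\db H_{,\bar b c}$ --- this is precisely what yields (\ref{peq31}) (using $\dim_\IC M>3$), not (\ref{peq32}). In the paper, (\ref{peq32}) comes instead from the \emph{second}-derivative variation: one rewrites $\check\nu_0$ with the metric contractions $g^{a\bar b}$ explicit, observes that $F^{0,2}_{A;a}$ written in an antiholomorphic frame is metric-independent, chooses $\dot g^{a\bar b}$ arbitrary at a point (with the $\db H$ contributions switched off), and then wedges the resulting tensor identity $0=8F^{0,2}_{A;a}\wedge F^{0,2}_{A;\bar b}+6F^{0,2}_A\wedge F^E_{\bar b a}F^{0,2}_A-9F^{0,2}_A\wedge F^E_{\bar\mu a}d\bar z^\mu\wedge F^{0,2}_{\bar b\bar c}d\bar z^c$ with $dz^a\wedge d\bar z^b$ and sums over $a,b$. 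So the argument you outline cannot produce (\ref{peq32}) as written, and without the explicit form of $\check\nu_0$ from Step 1 there is no way to verify that the $\dot g$-variation yields a nonzero multiple of $F^{0,2}_A\wedge F^{1,1}_A\wedge F^{0,2}_A$.
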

\begin{proof}The assumption that $A$ is $S^1_{V,3}$ compatible implies that $P_{4}\sum_{l}u_lP_0$ vanishes modulo weight $-7$.
If $r_1$ and $r_2$ are two polynomials of charge $q_1$ and $q_2$ respectively, then 
\[r_1\perp r_2 \text{ in } L_2(e^{-|z|^2/2}),\forall q_1\not = q_2.\]
Hence, the terms of $ P_{4}\sum_{l}u_lP_0$ and $ P_{0}\sum_{l}u_lP_{4}$ of charge $q_i$ each vanish modulo weight $-7$, for $q_i\in \{-2,-1,0,1,2\}$. The assumption that $A$ is $S^1_{V,2}$ compatible implies that 
\begin{equation}\label{S1V2}
	e(F_A^{0,2})^2= e(F_A^{0,2})e(\nabla^{0,1}F_A^{0,2}) =0.
\end{equation}  
The only remaining terms of charge $0$ in $P_{4} u_2P_{0}$, modulo weight $-7$ are,
modulo $O(e^{7kt})$,  
\begin{align*}
L^{-1}2e(F_A^{0,2})e^{4kt}L^{-1}z^a\bar z^be(F_{A;a\bar b+\bar b a}^{0,2})e^{4kt}
&= \frac{e^{8kt}}{30k^2}e(F_A^{0,2}) z^a\bar z^be(F_{A;a\bar b+\bar b a}^{0,2}) 
+  \frac{37e^{8kt}}{900k^3}e(F_A^{0,2})e(F_{A;a\bar a+\bar a a}^{0,2})    \\
&=  \frac{e^{8kt}}{15k^2}e(F_A^{0,2}) z^a\bar z^be(F_{A;\bar b a}^{0,2})
+\frac{e^{8kt}}{30k^2}e(F_A^{0,2}) z^a\bar z^b( F^E_{\bar b a} +R_{\bar b a})e(F_{A}^{0,2}) 
\\
&\quad +  \frac{37e^{8kt}}{450k^3}e(F_A^{0,2})e(F_{A;\bar a a}^{0,2})
+\frac{37e^{8kt}}{900k^3}e(F_A^{0,2})(F^E_{\bar a a}+R_{\bar a a})e(F_{A}^{0,2}) 
  ,\\
L^{-1}z^a\bar z^be(F_{A;a\bar b+\bar b a}^{0,2})e^{4kt}L^{-1}2e(F_A^{0,2})e^{4kt}
&=\frac{e^{8kt}}{20k^2}z^a\bar z^be(F_{A;a\bar b+\bar b a}^{0,2})e(F_A^{0,2})+\frac{e^{8kt}}{50k^3}e(F_{A;a\bar a+\bar a a}^{0,2})e(F_A^{0,2})\\
&=\frac{e^{8kt}}{10k^2}z^a\bar z^be(F_{A;\bar b a}^{0,2})e(F_A^{0,2})
+\frac{e^{8kt}}{20k^2}z^a\bar z^be(R_{\bar b a}F_{A}^{0,2})e(F_A^{0,2}) \\
&\quad- \frac{e^{8kt}}{20k^2}z^a\bar z^b e(F_{A}^{0,2})F^E_{\bar b a}e(F_A^{0,2})
 +\frac{e^{8kt}}{25k^3}e(F_{A;\bar a a}^{0,2})e(F_A^{0,2})\\
&\quad+\frac{ e^{8kt}}{50k^3}e(R_{\bar a a}F_{A}^{0,2})e(F_A^{0,2})-\frac{ e^{8kt}}{50k^3}e(F_{A}^{0,2})F^E_{\bar a a}e(F_A^{0,2}),\\
L^{-1}2z^ae(F_{A;a}^{0,2})e^{4kt}L^{-1}2\bar z^be(F_{A;\bar b}^{0,2})e^{4kt}
 &= \frac{e^{8kt}}{15k^2}z^ae(F_{A;a}^{0,2})\bar z^be(F_{A;\bar b}^{0,2}) + \frac{2e^{8kt}}{75k^3} e(F_{A;a}^{0,2})e(F_{A;\bar a}^{0,2}) ,\\
L^{-1}2\bar z^be(F_{A;\bar b}^{0,2})e^{4kt}L^{-1}2  z^ae(F_{A;a}^{0,2})e^{4kt}&=
\frac{e^{8kt}}{10k^2}\bar z^be(F_{A;\bar b}^{0,2})z^ae(F_{A;a}^{0,2}) + \frac{ e^{8kt}}{25k^3} e(F_{A;\bar a}^{0,2}) e(F_{A;a}^{0,2}).
\end{align*}

The only remaining term of charge $0$ in $P_{4} u_3P_{0}$, modulo weight $-7$ is, modulo $O(e^{7kt})$,
\begin{multline*}
-L^{-1}2  e(F_{A}^{0,2})e^{4kt}L^{-1}(\mathcal{\hat F}(y)+k z^a\bar z^b[F^E_{a\bar b} + R_{a\bar b}])L^{-1}2e(F_{A}^{0,2})e^{4kt}\\
 =  -\frac{e^{8kt}}{32k^3}  e(F_{A}^{0,2}) \mathcal{\hat F}(y) e(F_{A}^{0,2}) 
 - \frac{e^{8kt}}{60k^3}  e(F_{A}^{0,2}) k z^a\bar z^b[F^E_{a\bar b} + R_{a\bar b}] e(F_{A}^{0,2}) 
 -\frac{37e^{8kt} }{1800k^3}  e(F_{A}^{0,2}) [F^E_{a\bar a} + R_{a\bar a}]  e(F_{A}^{0,2}) .
\end{multline*}

Differentiating (\ref{S1V2}), we get that $F^{0,2}_{A;a}F^{0,2}_{A;\bar b}=-F^{0,2}_AF^{0,2}_{A;\bar b a}$ and $F^{0,2}_{A;\bar b}F^{0,2}_{A;a}=-F^{0,2}_{A;\bar b a}F^{0,2}_A$.  These equations allow us to cancel some of the contributions above.  The total leading order contribution from charge zero is then 
\begin{align*}
 \nu_0&:= \frac{e^{8kt}}{16k^3}  e(F_{A}^{0,2}) e(d\bar z^b)e^*(d\bar z^a)(F^E_{\bar ba}+R_{\bar b a}) e(F_{A}^{0,2}) 
+\frac{e^{8kt}}{18k^3}e(F^{0,2}_{A})e(F^{0,2}_{A;\bar aa})
 +\frac{e^{8kt} }{24k^3}  e(F_{A}^{0,2}) F^E_{\bar aa} e(F_{A}^{0,2})
+\frac{e^{8kt} }{24k^3}  e(F_{A}^{0,2})  R_{\bar aa} e(F_{A}^{0,2}).
\end{align*}

The assumption that $A$ is $S^1_{V,3}$ compatible implies $\nu_0 = 0$. Note that
$[R_{\bar a b},F_A^{0,2}]=2g^{s\bar t}R_{\bar ab t\bar l}d\bar z^l\wedge i_{\pp{\bar z^s}}F^{0,2}$.
When  $A$ is $IS^1_{V,2}$-compatible then the identities in Proposition \ref{isv12} force $F_A^{0,2}[R_{\bar a b},F_A^{0,2}]=0$.

Therefore, under the assumption that $A$ is $IS^1_{V,2}$-compatible,  $\nu_0$  reduces to
\[\check\nu_0:=  -\frac{e^{8kt}}{18k^3}e(F_{A;a}^{0,2}) e(F_{A;\bar a }^{0,2})  
 - \frac{e^{8kt}}{24k^3}e(F_A^{0,2}) F^E_{\bar a a} e(F_{A}^{0,2}) +\frac{e^{8kt}}{16k^3}  e(F_{A}^{0,2}) e(d\bar z^b)e^*(d\bar z^a) F^E_{\bar b a} e(F_{A}^{0,2}).\]

Finally, we want to exploit the full assumption that $A$ is $IS^1_{V,3}$-compatible. Thus we assume  $\check\nu_0$ vanishes for all polarizations. First we rewrite $0=-144k^3e^{-8kt}\check\nu_0$ with metric terms explicit rather than hidden in orthonormal coordinate systems: 
\[0=  8g^{a\bar b}e(F_{A;a}^{0,2}) e(F_{A;\bar b }^{0,2})  
+  6e(F_A^{0,2})g^{a\bar b} F^E_{\bar b a} e(F_{A}^{0,2}) 
-9 e(F_{A}^{0,2}) e(d\bar z^b) g^{a\bar \mu}F^E_{\bar b a} i_{\pp{\bar z^\mu}}e(F_{A}^{0,2}).\]

Writing $F^{0,2}$ in an anti-holomorphic frame, we see that $F^{0,2}_{;a}$ is independent of the metric.  Varying the metric gives 
\begin{align*}
0&=8\dot g^{a\bar b}F_{A;a}^{0,2} F_{A;\bar b }^{0,2}
+ 32 g^{a\bar b}F_{A;a}^{0,2} H_{,\bar b\bar \mu c}d\bar z^\mu\wedge F_{\bar c\bar f} d\bar z^f+
  6F_A^{0,2}\dot g^{a\bar b} F^E_{\bar b a} F_{A}^{0,2}  
 -9 F_{A}^{0,2}\wedge d\bar z^b \dot g^{a\bar \mu}F^E_{\bar b a}\wedge i_{\pp{\bar z^\mu}}F_{A}^{0,2}.
\end{align*}

At a fixed point, we may simultaneously choose $\dot g = 0$ and $\db H_{,\bar b c}$ an arbitrary $(0,1)$ form. For such a choice, 
\[0 = \db H_{,\bar b c}\wedge   F_{A;b}^{0,2} \wedge F^{0,2}_{\bar c\bar f}d\bar z^f. \]
Hence when $m>3$, for all vector fields $Z$, 
\[0 =  F_{A;a}^{0,2} \wedge i_ZF^{0,2}_A. \]
Now choose $\dot g$ arbitrary at a fixed point, and 
the remaining terms in the variation of $\check\nu_0$ give
\[0 =  8 F_{A;a}^{0,2}\wedge F_{A;\bar b }^{0,2}   
 +6 F_A^{0,2}\wedge  F^E_{\bar b a}  F_{A}^{0,2}  -9 F_{A}^{0,2}\wedge   F^E_{\bar \mu a} d\bar z^\mu\wedge  F_{\bar b\bar c}^{0,2}d\bar z^c .\]

Wedging with $dz^a\wedge d\bar z^b$ and summing over $a$ and $b$ yields Equation (\ref{peq32}).\end{proof}

\begin{proposition}\label{incpfA}
If $E\in S_{V,1}^1,$ then 
\begin{align*}
\ch_p(E)&\in (S_H^{p-\lfloor\frac{p}{2}\rfloor}\cap \bar S_H^{p-\lfloor\frac{p}{2}\rfloor})H^{2p}(M,\IQ), \text{ for all $p$}. 
\intertext{If $E\in IS_{V,3}^1,$ then}
\ch_p(E)&\in (S_H^{p-1}\cap \bar S_H^{p-1})H^{2p}(M,\IQ) \text{ for all $p<7$.}
\end{align*}
\end{proposition}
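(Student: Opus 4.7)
The plan is to represent $\ch_p(E) = \frac{1}{p!(2\pi i)^p}[\tr F_A^p]$ using an $S_{V,1}^1$-compatible (resp.\ $IS_{V,3}^1$-compatible) connection $A$ and to verify at the form level that the $(r,2p-r)$ components of $\tr F_A^p$ with $r < j$ vanish, where $j = p - \lfloor p/2 \rfloor$ for the first assertion and $j = p - 1$ for the second. Decomposing $F_A = F_A^{2,0} + F_A^{1,1} + F_A^{0,2}$ expands $\tr F_A^p$ as a sum of cyclic traces of length-$p$ words in these three letters; a word with $a$, $b$, $c$ letters of the respective types contributes to the $(2a+b,\,2c+b)$ component. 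Reality of $\ch_p(E)$ reduces the task to controlling one side: vanishing of components with $2a+b < j$ forces vanishing of those with $2c+b < j$ by complex conjugation.

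For the first part, Corollary \ref{corop1} specialized to $q=1$ gives the form identity $(F_A^{0,2})^2 = 0$, so the cyclic trace of any word containing two adjacent $F_A^{0,2}$ letters vanishes. In a cyclic word of length $p$, the $p-c$ non-$F_A^{0,2}$ letters bound at most $p-c$ arcs, and placing the $c$ copies of $F_A^{0,2}$ non-adjacently requires $c \leq p - c$, hence $c \leq \lfloor p/2 \rfloor$. Therefore $2a + b = 2p - b - 2c \geq p - \lfloor p/2 \rfloor$, which is exactly the desired inclusion.

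For the second part, an $IS_{V,3}^1$-compatible connection additionally satisfies $F_A^{0,2}\wedge F_A^{1,1}\wedge F_A^{0,2}=0$ by Proposition \ref{peq3}(2), so the cyclic trace of any word containing a consecutive $F_A^{0,2}F_A^{1,1}F_A^{0,2}$ subword also vanishes. For each $p\leq 6$, I would enumerate the finitely many cyclic word classes $(a,b,c)$ with $a+b+c=p$ and $2a+b\leq p-2$, and verify that each is killed by one of the two subword rules. A case-by-case check shows that for $p\leq 5$, and for all but one class at $p=6$, either two $F_A^{0,2}$'s are forced adjacent (by the same arc-counting pigeonhole) or a consecutive $F_A^{0,2}F_A^{1,1}F_A^{0,2}$ subword appears. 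The cases $p=5$, $(0,3,2)$ and $p=6$, $(0,3,3)$ exemplify the latter: when the $F_A^{0,2}$'s are spread so as not to be adjacent, the only remaining cyclic pattern necessarily contains an $F_A^{0,2}F_A^{1,1}F_A^{0,2}$ subword.

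The main obstacle is the $p=6$ class $(a,b,c)=(0,4,2)$ in which the two $F_A^{0,2}$'s are cyclically separated by exactly two $F_A^{1,1}$'s on each side, producing the word $\tr(F_A^{0,2}(F_A^{1,1})^2F_A^{0,2}(F_A^{1,1})^2)$; this contains neither an adjacent $F_A^{0,2}F_A^{0,2}$ nor a consecutive $F_A^{0,2}F_A^{1,1}F_A^{0,2}$. To kill it I would try to promote the cubic identity $F_A^{0,2}F_A^{1,1}F_A^{0,2}=0$ to the sandwich identity $F_A^{0,2}(F_A^{1,1})^2F_A^{0,2}=0$ by differentiating: apply $\partial_A$ to the cubic identity, use the Bianchi identities $\bar\partial_A F_A^{0,2}=0$ and $\partial_A F_A^{0,2}=-\bar\partial_A F_A^{1,1}$ to rewrite the derivative terms, and then invoke Proposition \ref{peq3}(1), Proposition \ref{isv12}, and the commutativity of the $F_A^{0,2}$-components (Proposition \ref{is2com}) to reduce the result to the desired sandwich identity. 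Extracting this secondary identity is the main technical step of the proof.
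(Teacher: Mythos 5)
Your first assertion is proved exactly as in the paper: $(F_A^{0,2})^2=0$ from Corollary \ref{corop1} plus the cyclic pigeonhole bound $c\leq\lfloor p/2\rfloor$ on the number of $F_A^{0,2}$ letters, and the conjugate inclusion by reality; that part is fine. Your case analysis for the second assertion is also correct and matches the paper's: for $p\leq 6$ every monomial of too-low holomorphic degree is killed by adjacency or by $F_A^{0,2}\wedge F_A^{1,1}\wedge F_A^{0,2}=0$ (Proposition \ref{peq3}), except for cyclic rearrangements of $(F_A^{1,1})^2\wedge F_A^{0,2}\wedge (F_A^{1,1})^2\wedge F_A^{0,2}$.

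The gap is in how you propose to dispose of that last term. You hope to ``promote'' the cubic identity to a pointwise sandwich identity $F_A^{0,2}\wedge (F_A^{1,1})^2\wedge F_A^{0,2}=0$ by differentiating, but this step is neither carried out nor plausible: applying $\db_A$ to the cubic identity gives only $F_A^{0,2}\wedge \p_A F_A^{0,2}\wedge F_A^{0,2}=0$ (already known), applying $\p_A$ introduces $\p_AF_A^{1,1}=-\db_AF_A^{2,0}$ rather than a second $F_A^{1,1}$, and the only mechanism producing an extra $F_A^{1,1}$ is the curvature identity $\db_A\p_A+\p_A\db_A=[F_A^{1,1},\cdot\,]$, which yields commutators, not the sandwich product. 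The paper's proof makes clear that no such form-level identity is claimed or needed: it shows instead that $\tr\,(F_A^{1,1})^2\wedge F_A^{0,2}\wedge(F_A^{1,1})^2\wedge F_A^{0,2}$ is $d$-exact modulo $S_H^5C^{12}(M,\IC)$, by writing the last $F_A^{1,1}\wedge F_A^{0,2}$ via $\db_A\p_AF_A^{0,2}$, using the Bianchi identity $\db_AF_A^{1,1}=-\p_AF_A^{0,2}$, the identity $F_A^{0,2}\wedge\p_AF_A^{0,2}=0$ from Proposition \ref{peq3}, and repeated integration by parts under the trace. In other words, the statement to be proved lives in cohomology, and the paper exploits exactly this freedom: the problematic trace need not vanish nor lie in the filtration as a form, only up to an exact term. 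Your plan, by insisting on a pointwise identity, is missing this idea, and the decisive step of the second assertion is therefore not established.
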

\begin{proof} 
Let $S_H^aC^{p}(M,\IC)$ denote the $p-$forms which can be written as a sum of $(s,p-s)$ forms, $s\geq a$. Let $\bar S_H^aC^{p}(M,\IC)$ denote the conjugate filtration. 
To show that $\ch_p(E)\in (S_H^{p-1}\cap \bar S_H^{p-a})H^{2p}(M,\IQ)$,  it suffices to show that 
$\tr (F_{A}^{2,0} + F_{A}^{1,1}+F_A^{0,2})^p$ is cohomologous to an element of $(S_H^{p-a}\cap \bar S_H^{p-a})C^{p}(M,\IC)$.  Expand this trace as the sum of traces of words in the letters 
$F_{A}^{2,0}$, $F_{A}^{1,1}$, and $F_A^{0,2}$. 
Let $A$ be an $S_{V,1}^1$ compatible connection. Then Corollary \ref{corop1} implies that $F_A^{0,2}\wedge F_A^{0,2}=0.$ Hence, after any cyclic rearrangement of a monomial with nonzero trace, there must be an $F_A^{2,0}$ or $F_A^{1,1}$ factor between any two $F_A^{0,2}$ factors. Consequently, at most $\lfloor\frac{p}{2}\rfloor$ $F_A^{0,2}$ factors may appear in  any monomial of degree $p$ with nonzero trace.  This proves $\ch_p(E)\in S_H^{p-\lfloor\frac{p}{2}\rfloor}H^{2p}(M,\IQ)$ for all $p$. The conjugate inclusion follows similarly, proving the first assertion. 

Now assume that $A$ is an $IS_{V,3}^1$ compatible connection.  
To prove the second assertion, we consider the case $p=6$, as the case $p<6$ follows from similar but simpler arguments.  We have seen that the trace of any nonzero monomial of degree $6$ in the curvature components must have at most $3$ $F_A^{0,2}$ factors, and (for every cyclic rearrangement) there must be an $F_A^{2,0}$ or $F_A^{1,1}$ factor between any two such factors. If there are at least $2$ $F_A^{2,0}$ factors, then the trace of the monomial lies in $S_H^5C^{12}(M,\IC)$. By proposition \ref{peq3}, $F_{A}^{0,2}\wedge F_{A}^{1,1}\wedge F_{A}^{0,2}=0$ for $IS_{V,3}^1$ compatible $A$ . Hence between any two $F_{A}^{0,2}$ factors there must be an $F_A^{2,0}$ factor or an $(F_A^{1,1})^2$ factor. The case of $3$ $F_A^{0,2}$ factors, $1$ or $2$ $F_A^{1,1}$ factors and $2$ or $1$ (respectively) $F_A^{2,0}$ is therefore excluded. Thus the only monomials with nonzero trace and $3$ $F_A^{0,2}$ factors are cyclic rearrangements of $(F_A^{0,2}\wedge F_A^{2,0})^3.$ These terms yield $(6,6)$ forms. Terms with at most $1$ $F_A^{0,2}$ factor lie in  $S_H^5C^{12}(M,\IC)$, as do terms with $1$ $F_A^{2,0}$ factor and $2$ $F_A^{0,2}$ factors. Hence we are left to consider terms with exactly $2$ $F_A^{0,2}$ factors and $4$  $F_A^{1,1}$ factors. The only monomials of this form with nonzero trace are cyclic rearrangements of $(F_A^{1,1})^2\wedge F_A^{0,2}\wedge (F_A^{1,1})^2\wedge F_A^{0,2}.$  We need now to show that the trace of this term is cohomologous to an element of $S_H^5C^{12}(M,\IC)$. 
We can use Proposition \ref{peq3} to set $F_A^{0,2}\wedge \p_AF_A^{0,2} = 0$ and the Bianchi identity to replace $\db_AF_A^{1,1}$ by $-\p_AF_A^{0,2}$. We thus have 
\begin{align*}
\tr (F_A^{1,1})^2\wedge F_A^{0,2}\wedge(F_A^{1,1})^2 \wedge F_A^{0,2} 
&= \tr (F_A^{1,1})^2\wedge F_A^{0,2}\wedge  F_A^{1,1} \wedge\db_A\p_AF_A^{0,2}\\
&= \tr (F_A^{1,1})^2\wedge\db_A[F_A^{0,2}\wedge F_A^{1,1} \wedge \p_AF_A^{0,2}] - \tr (F_A^{1,1})^2 \wedge F_A^{0,2}\wedge \db_AF_A^{1,1} \wedge  \p_AF_A^{0,2} \\
&= \tr \db_A[(F_A^{1,1})^2\wedge F_A^{0,2}\wedge  F_A^{1,1} \wedge \p_AF_A^{0,2}] - \tr \db_AF_A^{1,1}\wedge F_A^{1,1}\wedge F_A^{0,2}\wedge F_A^{1,1} \wedge \p_AF_A^{0,2}  \\
&\quad - \tr F_A^{1,1}\wedge \db_A F_A^{1,1}\wedge F_A^{0,2}\wedge  F_A^{1,1} \wedge \p_AF_A^{0,2} + \tr (F_A^{1,1})^2 \wedge F_A^{0,2}\wedge \p_AF_A^{0,2}\wedge  \p_AF_A^{0,2}  \\
&= \db\tr  (F_A^{1,1})^2\wedge F_A^{0,2}\wedge  F_A^{1,1} \wedge \p_AF_A^{0,2}  \\
&= d\tr  (F_A^{1,1})^2\wedge F_A^{0,2}\wedge  F_A^{1,1} \wedge \p_AF_A^{0,2}  \mod S_H^5C^{12}(M,\IC)  . 	
\end{align*}

 The result follows. 
\end{proof}

Having exploited the terms in $ P_{4}\sum_{l}u_lP_0$ of charge $0$, we now turn to those of charge $-2$. 
\begin{proposition}Let $E\in S^1_{V,3}$ with compatible connection $A$. 
Then for all $a,b$,
\begin{equation}
	F_{A;\bar a}^{0,2}\wedge F_{A;\bar b}^{0,2}  +  F_{A;\bar b}^{0,2}\wedge F_{A;\bar a}^{0,2}=0.
\end{equation}
For $E\in IS^1_{V,3}$ with compatible connection $A$, for all $a,c$, 
\begin{align}
\label{iFdbF}i_{\frac{\p}{\p \bar z^c}}F_{A}^{0,2}\wedge F_{A;\bar a}^{0,2}   =0,\text{ and}\\	
(i_{\frac{\p}{\p \bar z^c}}F_{A}^{0,2})\wedge (i_{\frac{\p}{\p \bar z^e}}F_{A}^{0,2}) =0.
\end{align}
\end{proposition}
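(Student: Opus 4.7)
The strategy is to apply the same asymptotic framework as in the proof of Proposition \ref{peq3}, but now analyze the charge $-2$ sector of $P_4\sum_l u_l P_0$. The $S^1_{V,3}$ hypothesis together with Proposition \ref{trths} and the Hilbert--Schmidt approximation by $Q_{k^{-1/2}}$ forces $\|\Pi_0^4\|_{HS}^2 = O(k^{m-7})$, hence $\|P_4\sum_l u_l P_0\|_{HS}^2 = O(k^{m-7})$. Polynomials in $z,\bar z$ of different charges are mutually $L^2$-orthogonal against the Gaussian weight $e^{-kr^2/(2\tanh(tk))}$, so each charge-sector contribution must be $O(k^{m-7})$ in isolation. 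Proposition \ref{peq3} extracted the charge-$0$ identities; here I would extract the charge-$-2$ identity.

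The leading charge-$-2$ contribution to $P_4 u_2 P_0$ has weight $-6$ and is built using Lemma \ref{lpowera} from three competing families. First, iterating the linear Taylor coefficient $2e^{4kt}\bar z^\mu e(F_{A;\bar\mu}^{0,2})$ of $2e^{4kt}\psi^{-1}e(F_A^{0,2}(x))\psi$ twice yields $\frac{e^{8kt}}{18k^2}\bar z^a\bar z^b\,e(F_{A;\bar a}^{0,2})e(F_{A;\bar b}^{0,2})$. Second, composing the quadratic Taylor coefficient $e^{4kt}\bar z^a\bar z^b\,e(F_{A;\bar a\bar b}^{0,2})$ with $2e^{4kt}e(F_A^{0,2})$ in either ordering contributes the two terms $\bar z^a\bar z^b\,e(F_A^{0,2})e(F_{A;\bar a\bar b}^{0,2})$ and $\bar z^a\bar z^b\,e(F_{A;\bar a\bar b}^{0,2})e(F_A^{0,2})$ with coefficients computed from Lemma \ref{lpowera}. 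A charge-$-2$ contribution from $u_3$ at weight $-6$ would require all three $H$ factors to have weight zero; from the fine-structure analysis of Section \ref{fineH} the only weight-zero charge-$-2$ operator is $-2F^E(\bar z^a\partial/\partial\bar z^a,\partial/\partial\bar z^j)\partial/\partial z^j$, whose $\partial/\partial z^j$ annihilates the $z$-independent intermediate produced by any composition of the other weight-zero multiplication operators, so $u_3$ contributes nothing here. Differentiating the Corollary \ref{corop1} identity $(F_A^{0,2})^2=0$ twice in $\bar z$ produces $F_A^{0,2}\wedge F_{A;\bar a\bar b}^{0,2}+F_{A;\bar a\bar b}^{0,2}\wedge F_A^{0,2}=-\bigl(F_{A;\bar a}^{0,2}\wedge F_{A;\bar b}^{0,2}+F_{A;\bar b}^{0,2}\wedge F_{A;\bar a}^{0,2}\bigr)$, collapsing the three families into a single scalar multiple of $\bar z^a\bar z^b\bigl(F_{A;\bar a}^{0,2}\wedge F_{A;\bar b}^{0,2}+F_{A;\bar b}^{0,2}\wedge F_{A;\bar a}^{0,2}\bigr)$.

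Evaluating the Hilbert--Schmidt norm of this leading term reduces to the Gaussian moments $\int\bar z^a\bar z^b z^c z^d e^{-kr^2/(2\tanh(tk))}\,dx$, which by Wick's rule produce the fully symmetric combination $g^{\bar ac}g^{\bar bd}+g^{\bar ad}g^{\bar bc}$. These contractions project onto the symmetric part in $(a,b)$, and demanding that the resulting $O(k^{m-6})$ contribution vanish yields the first identity $F_{A;\bar a}^{0,2}\wedge F_{A;\bar b}^{0,2}+F_{A;\bar b}^{0,2}\wedge F_{A;\bar a}^{0,2}=0$.

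For the $IS^1_{V,3}$ statements I would vary the polarization as in the proof of Proposition \ref{peq3}. Writing the first identity with metric factors $g^{a\bar b}$ made explicit, I differentiate along a K\"ahler variation with $\dot g=\eta$ and $\eta_{a\bar c}=H_{,a\bar c}$ using (\ref{varchrist}) and (\ref{varchristn}). Choosing $\dot g=0$ at a point while letting $\db H_{,\bar bc}$ range over all $(0,1)$-form values isolates the variation of $\nabla^{0,1}$ acting on $F_A^{0,2}$; using $\dim_{\IC}M>3$ as in Proposition \ref{isv12} strips off the arbitrary $(0,1)$-form factor to produce $i_{\partial/\partial\bar z^c}F_A^{0,2}\wedge F_{A;\bar a}^{0,2}=0$. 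Then taking $\dot g^{a\bar b}$ arbitrary at the point and using the identity just obtained to cancel the other terms in the variation leaves $(i_{\partial/\partial\bar z^c}F_A^{0,2})\wedge(i_{\partial/\partial\bar z^e}F_A^{0,2})=0$. The main obstacle is the weight-$-6$ bookkeeping of charge-$-2$ contributions and the verification that the $S^1_{V,2}$-level identities collapse them into the single symmetric combination above.
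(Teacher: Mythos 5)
Your overall strategy is the paper's: isolate the charge $-2$, weight $-6$ part of $P_4\sum_l u_l P_0$ (your observation that $u_3$ cannot contribute in this charge sector at weight $-6$ is correct, and is left implicit in the paper), then obtain the $IS^1_{V,3}$ statements by varying the polarization. However, your collapse step for the first identity has a genuine gap. By Lemma \ref{lpowera} the three charge $-2$ families enter with \emph{unequal} coefficients: $\frac{e^{8kt}}{18k^2}\bar z^a\bar z^b e(F_{A;\bar a}^{0,2})e(F_{A;\bar b}^{0,2})$, $\frac{e^{8kt}}{48k^2}\bar z^a\bar z^b e(F_A^{0,2})e(F_{A;\bar a\bar b}^{0,2})$ and $\frac{e^{8kt}}{24k^2}\bar z^a\bar z^b e(F_{A;\bar a\bar b}^{0,2})e(F_A^{0,2})$. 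The twice-differentiated Corollary \ref{corop1} identity controls only the sum $\bar z^a\bar z^b\bigl(e(F_A^{0,2})e(F_{A;\bar a\bar b}^{0,2})+e(F_{A;\bar a\bar b}^{0,2})e(F_A^{0,2})\bigr)=-2\bar z^a\bar z^b e(F_{A;\bar a}^{0,2})e(F_{A;\bar b}^{0,2})$, and because $\tfrac1{48}\neq\tfrac1{24}$ substituting it leaves $\frac{e^{8kt}}{k^2}\bigl(\tfrac{1}{72}\bar z^a\bar z^b e(F_{A;\bar a}^{0,2})e(F_{A;\bar b}^{0,2})+\tfrac{1}{48}\bar z^a\bar z^b e(F_{A;\bar a\bar b}^{0,2})e(F_A^{0,2})\bigr)$; demanding this vanish yields only a linear relation between the two expressions, not the asserted identity. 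The missing ingredient, which the paper uses, is the per-direction $S^1_{V,2}$ consequence $e(F_{A;\bar a}^{0,2})e(F_A^{0,2})=0$ for each $a$ (from Proposition \ref{SqV1}/Corollary \ref{peq2} read direction by direction, combined with the $\bar z$-derivative of $(F_A^{0,2})^2=0$); differentiating it once more replaces \emph{both} mixed families by $-\bar z^a\bar z^b e(F_{A;\bar a}^{0,2})e(F_{A;\bar b}^{0,2})$ and gives the nonzero total coefficient $-\tfrac{1}{144}$, whence the first identity.

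In the variational part you also skip a needed ingredient. The first identity contains no explicit $g^{a\bar b}$, so only the Christoffel variation $\dot\Gamma_{\bar a\bar b}^{\bar c}=H_{,\bar a\bar b c}$ enters; stripping off the arbitrary $(0,1)$-form $\db H_{,\bar a c}$ yields only the symmetrized relation $(i_{\frac{\p}{\p\bar z^c}}F_A^{0,2})\wedge F_{A;\bar a}^{0,2}+F_{A;\bar a}^{0,2}\wedge(i_{\frac{\p}{\p\bar z^c}}F_A^{0,2})=0$, i.e.\ an anticommutator of the endomorphism parts. To pass from this to Equation (\ref{iFdbF}) the paper invokes Proposition \ref{is2com} (the values of $F_A^{0,2}$ commute, available because $IS^1_{V,3}$ compatibility implies $IS^1_{V,2}$ compatibility and $\dim_{\IC}M>3$), a step absent from your outline. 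Likewise the last identity is obtained by varying (\ref{iFdbF}) once more through $\db H$, not by taking $\dot g^{a\bar b}$ arbitrary: unlike the charge-zero computation in Proposition \ref{peq3}, there is no $\dot g$ term here to exploit. With these two repairs your argument coincides with the paper's proof.
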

\begin{proof}
The terms of charge $-2$ in $P_{4} u_2P_{0}$, modulo weight $-7$ and $O(e^{7kt})$
are
\begin{align*}
L^{-1}2\bar z^ae^{4kt}e(F_{A;\bar a}^{0,2})L^{-1}2\bar z^be^{4kt}e(F_{A;\bar b}^{0,2}) 
&= \frac{e^{8kt}}{18k^2}\bar z^ae(F_{A;\bar a}^{0,2})\bar z^be(F_{A;\bar b}^{0,2}),\\
L^{-1}2e^{4kt}e(F_{A}^{0,2})L^{-1}e^{4kt}\bar z^a\bar z^be(F_{A;\bar a\bar b}^{0,2})
&= \frac{e^{8kt}}{48k^2}e(F_{A}^{0,2})\bar z^a\bar z^be(F_{A;\bar a\bar b}^{0,2}),
\text{ and}\\
L^{-1}e^{4kt}\bar z^a\bar z^be(F_{A;\bar a\bar b}^{0,2})L^{-1} 2e^{4kt}e(F_{A}^{0,2})
&= \frac{e^{8kt}}{24k^2}\bar z^a\bar z^be(F_{A;\bar a\bar b}^{0,2})e(F_{A}^{0,2}).	
\end{align*}
The assumption that $E\in S^1_{V,2}$ implies that 
\[0 = e(F_{A;\bar a}^{0,2}) e(F_{A }^{0,2}).\]
Using these equalities, we write the charge $-2$ contribution (modulo weight $-7$ and $O(e^{7kt})$)  as 
\[ \frac{e^{8kt}\bar z^a\bar z^b}{6k^2}(\frac{1}{3}e(F_{A;\bar a}^{0,2})e(F_{A;\bar b}^{0,2})+\frac{1}{8}e(F_{A}^{0,2})e(F_{A;\bar a\bar b}^{0,2})+ \frac{1}{4}e(F_{A;\bar a\bar b}^{0,2})e(F_{A}^{0,2})) = 
-\frac{e^{8kt}\bar z^a\bar z^b}{144k^2}e(F_{A;\bar a}^{0,2})e(F_{A;\bar b}^{0,2}).\]
This term must vanish for $E\in S^1_{V,3}$. Hence 
\begin{equation}\label{symeq} F_{A;\bar a}^{0,2}\wedge F_{A;\bar b}^{0,2}  +  F_{A;\bar b}^{0,2}\wedge F_{A;\bar a}^{0,2} =0.\end{equation}

Suppose now that $A$ is $IS_{V,3}^1$ compatible. Then we may take the first variation of the preceding equation with $b=a$ to obtain (no $a$-sum)
\[ \sum_c\db H_{,\bar a c}\wedge [(i_{\frac{\p}{\p \bar z^c}}F_{A}^{0,2})\wedge F_{A;\bar a}^{0,2} +  F_{A;\bar a}^{0,2}\wedge i_{\frac{\p}{\p \bar z^c}}F_{A}^{0,2}]  =0.\]
Hence for all $(a,c)$, 
\begin{equation}\label{anothervar} (i_{\frac{\p}{\p \bar z^c}}F_{A}^{0,2})\wedge F_{A;\bar a}^{0,2} +  F_{A;\bar a}^{0,2}\wedge i_{\frac{\p}{\p \bar z^c}}F_{A}^{0,2}  =0.\end{equation}
Since by Proposition \ref{is2com}, $F^{0,2}_A$ is a form taking values in a commutative subalgebra of $\ad(E)$, this equation implies Equation (\ref{iFdbF}).  In turn,
taking the first variation of  equation (\ref{iFdbF}), we find 
\begin{equation}\label{anothervar2} (i_{\frac{\p}{\p \bar z^c}}F_{A}^{0,2})\wedge (i_{\frac{\p}{\p \bar z^e}}F_{A}^{0,2}) =0.\end{equation}
\end{proof}

\begin{corollary}
Let $E\in IS^1_{V,3}$ with compatible connection $A$. Then $F_A^{0,2}$ takes values in a commutative nilpotent subalgebra of $\End(E)$ whose elements all square to zero. 
\end{corollary}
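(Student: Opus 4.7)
The plan is to combine the three algebraic constraints on $F_A^{0,2}$ available for an $IS^1_{V,3}$-compatible connection---commutativity from Proposition \ref{is2com}, the identity $F_A^{0,2}\wedge i_Z F_A^{0,2}=0$ from Proposition \ref{isv12}, and the newly proved $(i_{\p/\p\bar z^c}F_A^{0,2})\wedge(i_{\p/\p\bar z^e}F_A^{0,2})=0$---to show that the product in $\End(E_y)$ of any two components of $F_A^{0,2}$ vanishes. Once this pairwise annihilation is in hand the conclusion is immediate: every value $F_A^{0,2}(Z,W)$ is a linear combination of such components, so the subalgebra generated by these values consists of pairwise-annihilating commuting elements, which is commutative, nilpotent, and has every element squaring to zero.

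To execute this I would fix a point $y$ and $J_0$-complex coordinates and write $F_A^{0,2}=\sum_{a<b}F_{\bar a\bar b}\,d\bar z^a\wedge d\bar z^b$ with $F_{\bar a\bar b}=-F_{\bar b\bar a}\in\End(E_y)$. Reading the last identity of the preceding proposition in the basis $\{d\bar z^b\wedge d\bar z^d\}$ yields the exchange relation
\[F_{\bar c\bar b}F_{\bar e\bar d}=F_{\bar c\bar d}F_{\bar e\bar b},\qquad\forall\,b,c,d,e,\]
while $IS^1_{V,3}\subset IS^1_{V,2}$ combined with Proposition \ref{isv12} (in the form of Equation (\ref{p1exp})) gives
\[F_{\bar a\bar c}F_{\bar b\bar f}+F_{\bar a\bar b}F_{\bar f\bar c}+F_{\bar a\bar f}F_{\bar c\bar b}=0.\]
I would then apply the exchange relation together with the antisymmetry $F_{\bar\alpha\bar\beta}=-F_{\bar\beta\bar\alpha}$ to each of the two cross terms: a direct sign check gives $F_{\bar a\bar b}F_{\bar f\bar c}=-F_{\bar a\bar c}F_{\bar b\bar f}$ and $F_{\bar a\bar f}F_{\bar c\bar b}=F_{\bar a\bar c}F_{\bar b\bar f}$, so substituting back collapses the three-term identity to $F_{\bar a\bar c}F_{\bar b\bar f}=0$ for all indices.

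The only nontrivial step is the index manipulation of the last paragraph, which is purely algebraic bookkeeping; no real obstacle is anticipated. The conceptual point is simply that the exchange relation turns Equation (\ref{p1exp}) into a linear equation for the single quantity $F_{\bar a\bar c}F_{\bar b\bar f}$ whose coefficient does not vanish, forcing it to be zero.
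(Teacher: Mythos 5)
Your argument is correct and is essentially the paper's (implicit) proof: the corollary is stated without proof precisely because it follows from the commutativity of Proposition \ref{is2com} together with the identity $(i_{\frac{\p}{\p \bar z^c}}F_{A}^{0,2})\wedge (i_{\frac{\p}{\p \bar z^e}}F_{A}^{0,2})=0$ of the preceding proposition, which is exactly what you use. One small remark: the sign you assign to the term $F_{\bar a\bar f}F_{\bar c\bar b}$ depends on the order in which the exchange relation and the antisymmetry are applied, but this is immaterial---either choice collapses (\ref{p1exp}) to a nonzero multiple of $F_{\bar a\bar c}F_{\bar b\bar f}$, and in fact the exchange relation combined with antisymmetry alone already yields $F_{\bar a\bar c}F_{\bar b\bar f}=-F_{\bar a\bar c}F_{\bar b\bar f}$, so all pairwise products vanish even without invoking (\ref{p1exp}).
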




 \end{document}